\newcommand{\s}[1]{{\mathcal #1}}
\newcommand{\sr}[1]{{\mathscr #1}}
\newcommand{\bb}[1]{{\mathbb #1}}
\newcommand{\ceil}[1]{\left\lceil #1 \right\rceil}
\newtheorem{theorem}{Theorem} 
\newtheorem{corollary}[theorem]{Corollary}
\newtheorem{lemma}[theorem]{Lemma}
\newtheorem{proposition}[theorem]{Proposition}
\newtheorem{definition}[theorem]{Definition}
\newtheorem{remark}[theorem]{Remark}
\newtheorem{assumption}[theorem]{Assumption}
\numberwithin{equation}{section}
\numberwithin{theorem}{section}
\newcounter{step}
\begin{document}
	
	\title
	{A ``trembling hand perfect'' equilibrium for a certain class of mean field games}
	
	\author{P.~Jameson Graber}
	\thanks{The author is grateful to be supported by National Science Foundation through NSF Grant DMS-2045027.}
	\address{J. Graber: Baylor University, Department of Mathematics;\\
		Sid Richardson Building\\
		1410 S.~4th Street\\
		Waco, TX 76706\\
		Tel.: +1-254-710- \\
		Fax: +1-254-710-3569 
	}
	\email{Jameson\_Graber@baylor.edu}
	
	
	
	\subjclass[2020]{35Q89, 49N80, 91A16} 
	\date{\today}   
	
	\begin{abstract}
		We study a particular class of mean field games whose solutions can be formally connected to a scalar transport equation on the Wasserstein space of measures.
		For this class, we construct some interesting explicit examples of non-uniqueness of Nash equilibria.
		We then address the selection problem of finding rational criteria by which to choose one equilibrium over others.
		We show that when the theory of entropy solutions is used, we can obtain explicit error estimates for the ``vanishing noise limit,'' where the error is measured in a certain norm that measures the distance between two functions over the set of empirical measures.
	\end{abstract}
	
	\keywords{mean field games, entropy solutions, nonlinear transport equations, non-uniqueness, Nash equilibrium, selection problem}

	\maketitle
	
	
	\section{Introduction} \label{sec:introduction}
	
	Mean field game theory was introduced by Lasry and Lions \cite{lasry07} and independently by Caines Huang and Malham\'e \cite{huang2006large,huang2007large} as a way to model the contiuum limit of $N$-player Nash games as $N \to \infty$.
	An overview of the theory can be found in the following monographs, surveys, and lecture notes: \cite{carmona2017probabilistic,carmona2017probabilisticII,bensoussan2013mean,gomes2014mean,cardaliaguet2010notes,cardaliaguet2021cetraro}.
	Briefly, a mean field Nash equilibrium is understood conceptually as a population distribution in which no individual has any incentive (as measured by a cost function) to change behavior.
	In Section \ref{sec:setting} below we will give a precise mathematical description of a mean field game, at least in one particular framework.
	
	Most of the literature on mean field games has been occupied with the problem of proving there exists a unique solution, i.e.~a unique mean field Nash equilibrium, and then characterizing it as the limit of either open-loop or closed-loop equilibria for $N$-player games (see e.g.~\cite{cardaliaguet2019master}), or else as an $\varepsilon$-Nash equilibrium (i.e.~an ``almost equilibrium") to the $N$-player game.
	However, there is an increasing number of results showing non-uniqueness of the equilibrium for various types of mean field games \cite{bardi2019non,bayraktar2020non,cirant2016stationary,cirant2019time,cecchin2019convergence}, which has led to an interest in ``selection principles" \cite{cecchin2022selection,delarue2020selection}.
	
	The concept of a selection principle is essential to the justification of Nash equilibrium as a concept, since without uniqueness, each player is left without a rational way of selecting a strategy \cite{harsanyi1988general}.
	The idea introduced by Selten \cite{selten1975}, which is now often called ``trembling hand perfect equilibrium,'' is to refine the concept of equilibrium to include only those strategy distributions that are robust in the face of a ``perturbed'' game in which all players use mixed strategies and employ every strategy with at least probability $\varepsilon$.
	Drawing inspiration from this idea, we will offer in this study a possible selection principle based on the same principle of ``vanishing noise.''
	See \cite{graber2025remarks} for further discussion in a broader context.
	It is important to stress that, although the purpose of a selection principle is to uniquely determine an equilibrium, selection principles are themselves not unique.
	Each selection principle can be viewed as a modeling choice, which may have certain advantages and disadvantages in various contexts.
	
	The idea of selecting the vanishing noise limit as a weak solution is common in the mathematical theory of PDE.
	In the context of Hamilton-Jacobi equations, for example, it gives rise to the viscosity solutions \cite{crandall1983viscosity}, whereas for nonlinear balance laws one can arrive at entropy solutions \cite{kruvzkov1970first,kruvzkov1967generalized}.
	The concept of entropy solutions has recently been used by Cecchin and Delarue to define weak solutions to master equations for potential mean field games \cite{cecchin2022weak}.
	Indeed, the connection between mean field games and entropy solutions to classical nonlinear transport equations seems to be a common theme in recent studies of selection principles \cite{cecchin2019convergence,cecchin2022weak,delarue2020selection,graber2024some}.
	This theme is a source of inspiration for the main results of this paper.
	
	In this study, we will examine only a specific class of mean field games, which is taken from a recent article by the author and Meszaros \cite{graber2024some}.
	This class is conveniently simple in one sense, since the Nash equilibrium is determined entirely by a single real number.
	When taken as a function of the initial distribution of states and the time horizon, this real number turns out to satisfy a nonlinear transport equation on the space of probability measures, which can be viewed as a simplified (or even ``factorized'' \cite{graber2024some}) master equation.
	The study of uniqueness of mean field equilibria is equivalent to studying whether or not characteristics of the transport equation cross each other.
	
	In Section \ref{sec:uniqueness and non} we will construct some explicit examples where uniqueness of the equilibrium fails, both for $N$-player games and for the mean field game.
	Then in Section \ref{sec:trembling hand} we will offer a selection principle based on a particular stochastic perturbation of the mean field game, letting the noise vanish.
	Our principle will be based on the theory of entropy solutions for a sequence of discretized versions of our master equation.
	The main result is an error estimate in a certain norm, which measures the distance between two functions over the set of empirical measures; the statement is found in Section \ref{sec:PDE interpretation}, and its proof is given in Section \ref{sec:estimates}.
	Finally, in Section \ref{sec:conclusion}, we provide some concluding remarks and open questions.

	\subsection{The setting} \label{sec:setting}
	Consider the following class of mean field games.
	For $t > 0$ and $x \in \bb{R}^d$, we will define a value function
	\begin{equation} \label{eq:value function}
		u(t,x) = \inf \cbr{\int_0^t L(x(s),\dot{x}(s)) \dif s + g\del{\sigma_0(\mu_0),x(0)} : x(\cdot) \in \s{C}^1\del{[0,t];\bb{R}^d}, \ x(t) = x}
	\end{equation}
	for a given flow of measures $(\mu_s)_{s \in [0,t]}$ where $\mu_t = m$ is given.
	Here the cost depends on $(\mu_s)$ only through the final distribution $\mu_0$, via the function
	\begin{equation}
		\sigma_0 : \sr{P}_2(\bb{R}^d) \to \bb{R}.
	\end{equation}
	Here $\sr{P}_2(\bb{R}^d)$ is the set of all Borel probability measures on $\bb{R}^d$ with finite second moments.
	
	Nash equilibrium is defined as follows.
	Starting from a distribution $m$, all players choose optimal curves that cause an evolution of the population (backwards in time) so that at time $0$ the players are distributed according to some new measure $m'$.
	If $m' = \mu_0$, the game is said to be in equilibrium. 
	Formally, this is equivalent to finding a solution to the usual PDE system
	\begin{equation} \label{eq:mfg}
		\begin{aligned}
			\partial_t u + H(x,Du) &= 0,\\
			\partial_t \mu + \operatorname{div}\del{D_p H(x,Du)\mu} &= 0,\\
			u(0,x) &= g\del{\sigma_0(\mu_0),x},\\
			\mu_t &= m.
		\end{aligned}
	\end{equation}
	The coupling occurs only at time $0$, which is simpler than most mean field games.
	There is a further simplification in that the equilibrium is entirely characterized by the scalar quantity $\sigma = \sigma_0(\mu_0)$.
	
	Throughout this manuscript we will assume that the optimal control problem in \eqref{eq:value function} is always uniquely solvable.
	In particular, let us define 
	\begin{equation} \label{eq:parametrized value function}
		v(t,\sigma,x) = \inf \cbr{\int_0^t L(x(s),\dot{x}(s)) \dif s + g\del{\sigma,x(0)} : x(\cdot) \in \s{C}^1\del{[0,t];\bb{R}^d}, \ x(t) = x}.
	\end{equation}
	Then we assume $v$ is a \emph{classical} solution to the Hamilton-Jacobi equation
	\begin{equation}
		\partial_t v + H(x,Dv) = 0, \quad v(0,\sigma,x) = g(\sigma,x).
	\end{equation}
	There is a unique optimal trajectory that solves \eqref{eq:parametrized value function} given by solving
	\begin{equation} \label{optimal trajectory}
		\dot{x}(s) = D_p H(x(s),Dv(s,\sigma,x(s))), \quad x(t) = x.
	\end{equation}
	We denote the solution by $x(s,t,\sigma,x)$.
	Then we can see that the mean field Nash equilibrium condition is given by
	\begin{equation} \label{eq:equilibrium}
		\sigma = \sigma_0\del{x(0,t,\sigma,\cdot)_\sharp m}.
	\end{equation}
	Here $f_\sharp m$ denotes the push-forward of $m$ through a Borel measurable function $f$, i.e.~$f_\sharp m(E) = m(f^{-1}(E))$.
	If $\sigma$ is a solution of \eqref{eq:equilibrium}, then \eqref{eq:mfg} is solved by setting $u(t,x) = v(t,\sigma,x)$ and $\mu_s = x(s,t,\sigma,\cdot)_\sharp m$.
	Conversely, if $(u,\mu)$ is a solution of \eqref{eq:mfg}, then \eqref{eq:equilibrium} is solved by setting $\sigma = \sigma_0(\mu_0)$.
	
	Mean field games are supposed to be the limits of $N$-player games as $N \to \infty$.
	Let us now define the $N$-player game.
	Let $\bm{x} = (x_1,\ldots,x_N) \in (\bb{R}^d)^N$ be given.
	Denote by $\bm{x}_{-i}$ the vector $(x_1,\ldots,x_{i-1},x_{i+1},\ldots,x_N)$, and denote by $(\bm{x}_{-i},y_i)$ the vector $(x_1,\ldots,x_{i-1},y_i,x_{i+1},\ldots,x_N)$.
	For $i = 1,\ldots,N$ define a cost
	\begin{equation}
		J_i(\bm{x}(\cdot)) = \int_0^t L(x_i(s),\dot{x}_i(s))\dif s + g\del{\sigma_0\del{\frac{1}{N-1}\sum_{j \neq i} \delta_{x(0,t,\sigma_j,x_j)}},x_i(0)}
	\end{equation}
	over all $\s{C}^1$ curves $\bm{x}(\cdot):[0,t] \to (\bb{R}^d)^N$ such that $\bm{x}(t) = \bm{x}$.
	The curve $\widehat{\bm{x}}(\cdot)$ is a Nash equilibrium (in open loop controls) provided
	\begin{equation}
		J_i\del{\widehat{\bm{x}}(\cdot)} = \inf_{x_i(\cdot)} J_i\del{\widehat{\bm{x}}_{-i}(\cdot),x_i(\cdot)} \quad \forall i = 1,\ldots,N.
	\end{equation}
	The $N$-player Nash system is given by
	\begin{equation} \label{eq:N player equilibrium}
		\sigma_i = \sigma_0\del{\frac{1}{N-1}\sum_{j \neq i} \delta_{x(0,t,\sigma_j,x_j)}}, \quad i = 1,\ldots,N.
	\end{equation}
	If system \eqref{eq:N player equilibrium} has a solution, then a Nash equilibrium $\bm{x}(\cdot)$ is given by setting $x_i(s) = x(s,t,\sigma_i,x_i)$ for each $i$.
	Conversely, if $\bm{x}(\cdot)$ is a Nash equilibrium, then $\sigma_i = \sigma_0\del{\frac{1}{N-1}\sum_{j \neq i} \delta_{x_j(0)}}$ is a solution of \eqref{eq:N player equilibrium}.
	
	The purpose of this study is to examine the uniqueness of solutions of both the $N$-player game and the mean field game.
	If there is more than one equilibrium, then we are interested in the following question: 
	is there a selection principle that rationally distinguishes between equilibria, and if so, do we get convergence of selected equilibria to the mean field limit?

	\subsection{Notations and basic definitions} \label{sec:notation}
	
	The Wasserstein space \(\mathscr{P}_2(\mathbb{R}^d)\) is defined as the set of Borel probability measures on \(\mathbb{R}^d\) with finite second moment, i.e.,
	\[
	\mathscr{P}_2(\mathbb{R}^d) = \left\{ \mu \in \mathscr{P}(\mathbb{R}^d) \;\middle|\; \int_{\mathbb{R}^d} \abs{x}^2 \, \dif\mu(x) < \infty \right\}.
	\]
	The Wasserstein distance of order 2 between two probability measures \(\mu, \nu \in \mathscr{P}_2(\mathbb{R}^d)\) is given by
	\[
	W_2(\mu, \nu) = \left( \inf_{\gamma \in \Pi(\mu, \nu)} \int_{\mathbb{R}^d \times \mathbb{R}^d} \abs{x - y}^2 \, \dif\gamma(x,y) \right)^{\frac{1}{2}},
	\]
	where \(\Pi(\mu, \nu)\) is the set of all couplings of \(\mu\) and \(\nu\), i.e., the set of probability measures on \(\mathbb{R}^d \times \mathbb{R}^d\) with marginals \(\mu\) and \(\nu\).
	The integral on the right-hand side is called the (quadratic) transportation cost.
	The set of optimal couplings, denoted $\Pi_0(\mu,\nu)$, is the set of all $\pi \in \Pi(\mu,\nu)$ that minimize the transportation cost; this set is always nonempty \cite{villani2008optimal}.
	
	Let $u:\sr{P}_2(\bb{R}^d) \to \bb{R}$.
	We say that $u$ is \emph{continuously differentiable} provided there exists a continuous function $D_m u:\sr{P}_2(\bb{R}^d) \times \bb{R}^d \to \bb{R}^d$ such that
	\begin{equation}
		u(\nu) - u(\mu) = \int_{\bb{R}^d} D_m u(\mu,x)\cdot (y-x)\dif \pi(x,y) + o\del{W_2(\mu,\nu)}
	\end{equation}
	for some (hence for all) $\pi \in \Pi_0(\mu,\nu)$.
	See e.g.~\cite{carmona2017probabilistic,gangbo2019differentiability} for a discussion of Wasserstein derivatives and conditions for smoothness.
	A simple example is $u(\mu) = \int \phi \dif \mu$ where $\phi:\bb{R}^d \to \bb{R}$ is a smooth function, in which case $D_m u(\mu,x) = D\phi(x)$.
	
	A vector in $\bb{R}^d$ will be denoted $x = (x^1,\ldots,x^d)$.
	A vector in $(\bb{R}^d)^N$ will be denoted by $\bm{x} = (x_1,\ldots,x_n)$, where each $x_j \in \bb{R}^d$ can be written $x_j = (x_j^1,\ldots,x_j^d)$.
	For $\bm{x} \in (\bb{R}^d)^N$, the $n$ point empirical measure corresponding to $\bm{x}$ is $\mu^N_{\bm{x}} = \frac{1}{N}\sum_{j = 1}^N \delta_{x_j}$, where $\delta_x$ is a Dirac mass at the point $x \in \bb{R}^d$.

	For a continuous function $u:\sr{P}_2(\bb{R}^d) \to \bb{R}$, the discretization to $n$ point empirical measures is defined by
	\begin{equation}
		u_N(\bm{x}) = u(\mu^N_{\bm{x}}).
	\end{equation}
	If $u$ is continuously differentiable, then so is $u_N$, with
	\begin{equation} \label{discrete derivative}
		D_{x_j} u_N(\bm{x}) = \frac{1}{N}D_m u(\mu^N_{\bm{x}},x_j).
	\end{equation}
	See, e.g.~\cite[Proposition 5.39]{carmona2017probabilistic}.

	\section{Uniqueness and non-uniqueness}
	
	\label{sec:uniqueness and non}
	
	\subsection{Conditions leading to uniqueness of the mean field equilibrium}
	
	\label{sec:uniqueness conditions}
	
	As discussed in \cite{graber2024some}, the equilibrium condition \eqref{eq:equilibrium} has at most one solution under a suitable assumption of monotonicity.
	In particular, if the function
	\begin{equation} \label{sigma - sigma0}
		F(\sigma) := \sigma - \sigma_0\del{x(0,t,\sigma,\cdot)_\sharp m}
	\end{equation}
	is strictly monotone, there is at most one Nash equilibrium.
	To give a sufficient condition that guarantees uniqueness, we start by setting
	\begin{equation}
		b(t, \sigma, x) = D_{p} H(x, Dv(t, \sigma, x)).
	\end{equation}
	\begin{assumption}
		\label{sigma0 bounded}		
		$\sigma_0:\sr{P}_2(\bb{R}^d) \to \bb{R}$ is Borel measurable and bounded, taking values in some interval $(\sigma_*,\sigma^*)$.
	\end{assumption}
	\begin{assumption}[Regularity of $\sigma_0$]
		\label{sigma0 smooth}
		$\sigma_0:\sr{P}_2(\bb{R}^d) \to \bb{R}$ is continuously differentiable, and $D_m \sigma_0$ is bounded.
		Moreover, for each $N$, $\sigma_{0,N}(\bm{x}) = \sigma_0(\mu^N_{\bm{x}})$ is twice continuously differentiable with H\"older continuous second derivatives.
	\end{assumption}
	We do not always insist on Assumption \ref{sigma0 smooth}, especially in discussing uniqueness and non-uniqueness, since regularity is not as essential as monotonicity.
	Some particularly interesting examples of non-uniqueness arise when $\sigma_0$ is discontinuous; see Section \ref{sec:nonuniqueness}.
	
	\begin{assumption}[Regularity of $b$]
		\label{b regular}		
		$b:\intco{0,\infty} \times (\sigma_*,\sigma^*) \times \bb{R}^d \to  \bb{R}^d$, written $b(t,\sigma,x)$, is a continuous vector field, continuously differentiable with respect to variables $(\sigma,x)$ such that $b$ is uniformly bounded and Lipschitz with respect to $x$, i.e.~there exist constants $M_b$ and $L_b$ such that
		\begin{equation} \label{b lipschitz constant}
			\abs{b(t,\sigma,x)} \leq M_b, \ \ \abs{D_x b(t,\sigma,x)} \leq L_b \quad \forall x \in \bb{R}^d, \ \forall t \geq 0, \ \forall \sigma \in (\sigma_*,\sigma^*).
		\end{equation}
		We also assume $b$ is H\"older continuous with respect to $t$ on bounded intervals $[0,T]$, uniformly with respect to the variables $(\sigma,x)$.
		Finally, let $B(t,\sigma,x) = \int_0^\sigma b(t,u,x)\dif u$.
		Then there exists a constant $H_B$ such that
		\begin{equation} \label{B Hessian bound}
			\abs{D_{xx}^2 B(t,\sigma,x)} \leq H_B \quad \forall x \in \bb{R}^d, \ \forall t \geq 0, \ \forall \sigma \in (\sigma_*,\sigma^*).
		\end{equation}
	\end{assumption}
	
	\begin{remark}
		It follows from \eqref{b lipschitz constant} that $D_x B$ is also bounded.
		A sufficient condition implying \eqref{B Hessian bound} is a bound on $D_{xx}^2 b$.
	\end{remark}
	
	Unlike Assumption \ref{sigma0 smooth}, Assumption \ref{b regular} is rather crucial in this study.
	It guarantees that, given a parameter $\sigma$ and time horizon $t$, the optimal trajectory $x(s,t,\sigma,x)$ is uniquely defined and continuous in all of its arguments.
	
	\begin{definition} \label{def:pos vector}
		Let $v = (v^1,\ldots,v^d) \in \bb{R}^d$.
		We will write $v \geq 0$ and say that $v$ is non-Negative provided $v^j \geq 0$ for all $j = 1,\ldots,d$.
		If $v^j > 0$ for all $j$ then we write $v > 0$ and say it is positive.
		We say $v$ is non-positive, resp.~negative, if $-v$ is non-Negative, resp.~positive.
	\end{definition}
	
	\begin{assumption}[Monotonicity of $\sigma_0$]
		\label{monotonicity}
		$\sigma_0$ is ``increasing in all directions,'' i.e.
		\begin{equation}
			D_m \sigma_0(m,x) \geq 0 \quad \forall x \in \bb{R}^d, \quad \forall m \in \sr{P}_2(\bb{R}^d),
		\end{equation}
		cf.~Definition \ref{def:pos vector}.
	\end{assumption}
	
	\begin{assumption}[Monotonicity of $b$]
		\label{monotonicity b}
		Each component of $b$ is strictly increasing with respect to $\sigma$, i.e.~$\partial_\sigma b > 0$ in the sense of Definition \ref{def:pos vector}.
	\end{assumption}
	
	\begin{assumption}
		\label{diagonal}
		The vector field $b(t,x,u)$ is \emph{diagonal} in the sense that the $j$th component function can be written $b^j(t,x^j,u)$ where $x^j$ is the $j$th component of $x$.
		In particular, $D_x b(t,x,u)$ is a diagonal matrix.
	\end{assumption}
	Assumption \ref{monotonicity b} can be thought of as a monotonicity assumption on $b$. However, it only guarantees uniqueness when accompanied by Assumption \ref{monotonicity}.
	
	A simple example that satisfies Assumptions \ref{sigma0 bounded}, \ref{sigma0 smooth}, \ref{b regular}, \ref{monotonicity}, \ref{monotonicity b}, and \ref{diagonal} is $d = 1$, $H(x,p) = \frac{1}{2}p^2$ and $g(\sigma,x) = \sigma x$ so that $b(t,\sigma,x) = \sigma t$, and $\sigma_0(m) = \int \phi \dif m$ where $\phi$ is a smooth, bounded, increasing function.
	More generally, we have the following:
	\begin{proposition}
		Let $H(x,p) = \frac{1}{2}\abs{p}^2$.
		Suppose $g(\sigma,x)$ satisfies the following:
		\begin{enumerate}
			\item $D_{xx}^2 g(\sigma,x)$ is diagonal, i.e.~$D_{x^jx^k}^2 g(\sigma,x) = 0$ for $j \neq k$;
			\item $D_{x^jx^j}^2 g(\sigma,x) \geq 0$ for all $j = 1,\ldots,d$ (hence $g$ is convex with respect to $x$);
			\item $D_{xx}^2 g(\sigma,x)$ and $D_{xxx}^3 g(\sigma,x)$ are bounded;
			\item $D_{x^j \sigma}^2 g(\sigma,x) > 0$ for all $j = 1,\ldots,d$.
		\end{enumerate}
		Then $b$ satisfies all the conditions specified in Assumptions \ref{b regular}, \ref{monotonicity b}, and \ref{diagonal}.
		
		Let $\sigma_0(m) = \int_{(\bb{R}^d)^N} \phi(x_1,\ldots,x_n)\dif m(x_1) \cdots \dif m(x_n)$ where $\phi$ is a smooth, bounded function that is increasing in all variables, i.e.~$D_{x_j} \phi \geq 0$ for all $j = 1,\ldots,N$ (see Definition \ref{def:pos vector}).
		Then $\sigma_0$ satisfies the conditions specified by Assumptions \ref{sigma0 bounded}, \ref{sigma0 smooth}, and \ref{monotonicity}.
	\end{proposition}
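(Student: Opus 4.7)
The plan is to derive an explicit formula for $b$ by the method of characteristics and then verify each required property by implicit differentiation, exploiting the fact that the diagonal-Hessian hypothesis causes the characteristic system to decouple into $d$ scalar equations. Since $H(x,p) = \tfrac{1}{2}\abs{p}^2$, the Lagrangian is $L(x,q) = \tfrac{1}{2}\abs{q}^2$ and the Euler--Lagrange equations force the optimizer $x(s,t,\sigma,x)$ to be a straight line. The free-endpoint transversality condition at $s = 0$ gives $\dot{x}(0) = D_x g(\sigma, x(0))$, so \eqref{optimal trajectory} takes the form
\begin{equation*}
x(s,t,\sigma,x) = y + s\, D_x g(\sigma, y), \qquad x = y + t\, D_x g(\sigma, y),
\end{equation*}
where the second relation implicitly defines $y = y(t,\sigma,x)$. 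The Jacobian $I + t D_{xx}^2 g$ is symmetric positive definite by hypothesis~(2), and hypothesis~(3) together with the implicit function theorem make $y$ globally well-defined and $C^2$ in $(t,\sigma,x)$. One then identifies $b(t,\sigma,x) = D_x v(t,\sigma,x) = D_x g(\sigma, y(t,\sigma,x))$.

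Assumption \ref{diagonal} follows at once from~(1): since $D_{xx}^2 g$ is diagonal, $\partial_{x^j} g(\sigma,x)$ depends on $x$ only through $x^j$, so we may write $g^j(\sigma, x^j) := \partial_{x^j} g(\sigma,x)$. The implicit equation then decouples into $d$ scalar equations $x^j = y^j + t\, g^j(\sigma, y^j)$, from which $y^j$, and hence $b^j(t,\sigma,x) = g^j(\sigma, y^j)$, depends on $x$ only through $x^j$. For Assumption \ref{monotonicity b}, implicit differentiation of the scalar equation in $\sigma$ gives
\begin{equation*}
\partial_\sigma b^j = \frac{\partial_\sigma g^j(\sigma, y^j)}{1 + t\, \partial_{y^j} g^j(\sigma, y^j)},
\end{equation*}
whose numerator is strictly positive by~(4) and whose denominator is $\geq 1$ by~(2); hence $\partial_\sigma b > 0$ componentwise.

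For Assumption \ref{b regular}, the analogous differentiation in $x^j$ yields $\partial_{x^j} b^j = \partial_{y^j} g^j/(1 + t\, \partial_{y^j} g^j)$, which lies in $[0,\norm{D_{xx}^2 g}_\infty]$ uniformly in $(t,\sigma)$; this gives the Lipschitz bound. One more differentiation, combined with the $L^\infty$ bound on $D_{xxx}^3 g$ from~(3), gives $\abs{D_{xx}^2 b} \leq C$, so that
\begin{equation*}
\abs{D_{xx}^2 B(t,\sigma,x)} = \abs[\Big]{\int_0^\sigma D_{xx}^2 b(t,u,x)\,\dif u} \leq C\max\cbr{\abs{\sigma_*}, \abs{\sigma^*}}
\end{equation*}
by exchanging differentiation with the integral. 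Continuity and H\"older dependence in $t$ follow from the smoothness of the implicit map $y(t,\sigma,x)$. The one delicate point --- and the main obstacle of the first part --- is the uniform bound $\abs{b} \leq M_b$, since hypothesis~(3) controls only the second and third derivatives of $g$; this should be read into the hypothesis in the sense that $D_x g$ is assumed bounded on $(\sigma_*, \sigma^*) \times \bb{R}^d$, as holds in the linear example $g(\sigma,x) = \sigma x$.

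For the claim about $\sigma_0$, the standard chain rule for Wasserstein functionals applied to $\sigma_0(m) = \int \phi\, \dif m^{\otimes N}$ gives
\begin{equation*}
D_m \sigma_0(m, x) = \sum_{j=1}^{N} \int_{(\bb{R}^d)^{N-1}} \del{D_{x_j} \phi}\del{x_1, \ldots, x_{j-1}, x, x_{j+1}, \ldots, x_N}\, \prod_{k \neq j} \dif m(x_k),
\end{equation*}
which is nonnegative componentwise by hypothesis, yielding Assumption \ref{monotonicity}. Boundedness and smoothness of $\sigma_0$ and $D_m\sigma_0$ (Assumptions \ref{sigma0 bounded} and \ref{sigma0 smooth}) are inherited from the analogous properties of $\phi$; in particular $\sigma_{0,N}(\bm{x}) = \sigma_0(\mu^N_{\bm{x}}) = N^{-N}\sum \phi(x_{i_1},\ldots,x_{i_N})$ is a finite sum of compositions of $\phi$ with coordinate projections, so it inherits the $C^{2,\alpha}$ regularity of $\phi$ directly. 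Apart from the subtle point about $\abs{b}$ noted above, each estimate reduces by the scalar decoupling from~(1) to an elementary one-dimensional implicit-function-theorem computation.
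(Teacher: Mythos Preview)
Your proof is correct and follows essentially the same route as the paper: both arrive at the implicit relation $x = y + tD_x g(\sigma,y)$ (you via the Euler--Lagrange equations, the paper via the Hopf--Lax formula), identify $b(t,\sigma,x) = D_x g(\sigma,y(t,\sigma,x))$, and verify the required properties by implicit differentiation, obtaining the identical formulas for $\partial_\sigma y^k$ and $\partial_{x^j} y^k$. Your observation that the uniform bound $\abs{b} \leq M_b$ tacitly requires $D_x g$ to be bounded---a hypothesis not literally contained in condition~(3)---is well taken; the paper's proof glosses over this point.
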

	
	\begin{proof}
		In this case we have the Hopf-Lax formula
		\begin{equation}
			v(t,\sigma,x) = \min\cbr{\frac{\abs{x-y}^2}{2t} + g(\sigma,y) : y \in \bb{R}^d}.
		\end{equation}
		One computes that the optimal velocity is given by
		\begin{equation}
			b(t,\sigma,x) = D_x v(t,\sigma,x) = \frac{x - \del{I + tD_x g(\sigma,\cdot)}^{-1}(x)}{t} =: \frac{x - y(t,\sigma,x)}{t}.
		\end{equation}
		By implicit differentiation and the conditions given on $g$, one sees that $y$ is smooth and has derivatives given by
		\begin{equation}
			\begin{split}
				\partial_{x^j}y^k &= \frac{\delta_{jk}}{1 + tD_{y^ky^k}^2 g(\sigma,y)},\\
				\partial_{\sigma}y^k &= \frac{-tD_{y^k\sigma}^2 g(\sigma,y)}{1 + tD_{y^ky^k}^2 g(\sigma,y)}.
			\end{split}
		\end{equation}
		From this we deduce that $b = D_y g(\sigma,y)$ satisfies the conditions specified in Assumptions \ref{b regular}, \ref{monotonicity b}, and \ref{diagonal}.
		
		As for $\sigma_0$, we note that $\sigma_0$ is smooth with Wasserstein gradient given by
		\begin{multline}
			D_m \sigma_0(m,x)
			\\ = \sum_{j=1}^N \int_{(\bb{R}^d)^{N-1}} D_{x_j}\phi(x_1,\ldots,x_{j-1},x,x_{j+1},\ldots,x_n)\dif m(x_1) \cdots \dif m(x_{j-1}) \dif m(x_{j+1}) \cdots \dif m(x_n).
		\end{multline}
		Since $\phi$ is increasing in all variables, so is $\sigma_0$.
		Therefore Assumptions \ref{sigma0 bounded}, \ref{sigma0 smooth}, and \ref{monotonicity} are satisfied.
	\end{proof}
	
	The assumption that $b$ should be increasing with respect to $\sigma$ can be interpreted as saying that optimal velocity is an increasing function of the coordination parameter $\sigma$.
	Assumption \ref{diagonal} is introduced for technical reasons, as we will see presently.
	
	Under these assumptions, it is possible to show that $\partial_{\sigma} x(0,t,\sigma,x) < 0$ (in the sense that every component is negative).
	Indeed, by implicitly differentiating the ODE \eqref{optimal trajectory}, one computes 
	\begin{equation} \label{partial sigma x}
		\partial_{\sigma} x(s,t,\sigma,x) = -\int_s^t \exp\del{-\int_s^\tau D_x b(\tau',x_j(\tau',t,x,\sigma),\sigma)\dif \tau'}\partial_\sigma b(\tau,x(\tau,t,x,\sigma),\sigma)\dif \tau,
	\end{equation}
	and for $s < t$ this is negative by Assumption \ref{monotonicity}.
	Therefore, with $F$ defined as in \eqref{sigma - sigma0}, we see that
	\begin{equation} \label{Fprime}
		F'(\sigma) = 1 - \int_{\bb{R}^d} D_m \sigma_0\del{x(0,t,\sigma,\cdot)_\sharp m, x(0,t,\sigma,x)}\partial_\sigma x(0,t,\sigma,x)\dif m(x) \geq 1,
	\end{equation}
	hence $F$ necessarily has precisely one zero in the range of $\sigma_0$.
	Therefore Assumptions \ref{sigma0 bounded}, \ref{sigma0 smooth}, \ref{b regular}, \ref{monotonicity}, \ref{monotonicity b}, and \ref{diagonal} give us unconditional uniqueness for the equilibrium, in the sense that it holds no matter the size of $t$.
	
	Throughout the rest of this manuscript, we will always take Assumption \ref{b regular}, concerning the regularity of the optimal vector field $b$.
	Assumptions \ref{sigma0 bounded} and \ref{sigma0 smooth}, concerning boundedness and regularity of $\sigma_0$, are usually in force when we seek to prove general results; however, they can often be relaxed in the context of finding explicit Nash equilibria.
	On the other hand, Assumption \ref{monotonicity}, on the monotonicity of $\sigma_0$, as well as Assumptions \ref{monotonicity b} and \ref{diagonal} on the structure of $b$, are only in force when proving uniqueness of a mean field equilibrium.
	Otherwise, these assumptions may be dispensed with.
	
	Let us remark that the monotonicity conditions presented here have no necessary connection to other more commonly used conditions, such as the Lasry-Lions monotonicity condition or displacement monotonicity.
	That is, Assumptions \ref{monotonicity} and \ref{monotonicity b} do not imply that the coupling function $g(\sigma_0(m),x)$ must be monotone in a Lasry-Lions or displacement sense, and vice versa. 
	This is made clear by the examples found in \cite{graber2024some,graber2023monotonicity}.
	Conceptually, the monotonicity conditions presented here are comparable to the classical conditions leading to global in time smoothness to solutions of nonlinear conservation laws; see e.g.~\cite[Theorem 6.1.1]{dafermos16}.
	
	In the remainder of this section, we examine examples of non-uniqueness, which is our motivation for studying selection principles.
	In deriving examples, we also notice that the $N$-player game can have equilibria that do not converge to mean field equilibria as $N \to \infty$.
	This phenomenon has been noticed elsewhere; see for instance \cite{lacker2016general,cardaliaguet2020multiple}, and see \cite{campi2018nplayer,nutz2020convergence,cecchin2019convergence,lacker2020convergence} for related results.

	\subsection{Uniqueness for the mean field does not imply uniqueness for $N$ players}
	
	\label{sec:mean field unique but not N player}
	
	Since Assumption \ref{monotonicity} guarantees uniqueness for the mean field game, one could reasonably ask whether it guarantees uniqueness for the corresponding $N$-player game.
	Unfortunately, the answer is no.
	In this section we give a simple example to prove this.
	
	Consider the case $d = 1$, $H(p) = \frac{1}{2}p^2$, $g(\sigma,x) = x\sigma$, and $\sigma_0(m) = \int \phi \dif m$ for a function $\phi$ that is at least \emph{piecewise} smooth.
	We will assume $\phi' > 0$, so that the monotonicity from Assumption \ref{monotonicity} holds; in particular, the mean field equilibrium is unique.
	Indeed, the mean field equilibrium condition is now precisely
	\begin{equation}
		\sigma = \int_{\bb{R}} \phi(x-\sigma t)\dif m(x),
	\end{equation}
	and one can check directly that there is a unique solution by virtue of the fact that $\phi$ is non-decreasing.
	To simplify even further, we will assume $m$ is a Dirac mass at zero, so the mean field equilibrium condition is nothing more than
	\begin{equation}
		\sigma = \phi(-\sigma t),
	\end{equation}
	which clearly has a unique solution.
	On the other hand, for the $N$-player game, condition \eqref{eq:N player equilibrium} becomes
	\begin{equation}
		\sigma_i = \frac{1}{N-1}\sum_{k \neq i}\phi(-\sigma_k t),
	\end{equation}
	where for simplicity we just assume that all players start at $x_i = 0$, which is the most natural choice given that they should converge on the Dirac mass at zero in the mean field limit.
	If $\sigma$ is a mean field equilibrium, then one can take $\sigma_i = \sigma$ for all $i$ to get an $N$-player equilibrium.
	However, it is in general not the only equilibrium.
	
	Let us suppose that $\phi$ is an odd function, i.e.~$\phi(x) = -\phi(-x)$, such that $\phi(x)/x$ is unbounded on $(0,\infty)$.
	(For example, one may take $\phi(x) = x^\alpha$ for $x > 0$ where $\alpha \in (0,1) \cup (1,\infty)$.)
	Then there exists a sequence $b_N > 0$ such that $\phi(tb_N) = (N-1)b_N$ for all $N$.
	Suppose that $N$ is even.
	For $k$ an odd number, we take $\sigma_k = b_N$, whereas for $k$ even we take $\sigma_k = -b_N$.
	Since $\phi$ is an odd function, we have that
	\begin{equation}
		\sum_{k = 1}^N \phi(-t\sigma_k) = 0,
	\end{equation}
	the even terms canceling with the odd terms.
	It follows that
	\begin{equation}
		\frac{1}{N-1}\sum_{k \neq i} \phi(-t\sigma_k) = -\frac{1}{N-1}\phi(-t\sigma_i) = \frac{1}{N-1}\phi(t\sigma_i).
	\end{equation}
	Since $\phi(tb_N) = (N-1)b_N$ and $\phi$ is odd, we deduce that $\frac{1}{N-1}\phi(t\sigma_i) = \sigma_i$ for all $i$, hence
	\begin{equation}
		\frac{1}{N-1}\sum_{k \neq i} \phi(-t\sigma_k) = \sigma_i,
	\end{equation}
	as desired.	
	
	This example can be used to show that $N$-player equilibria need not converge to mean field equilibria.
	If $\phi(x)/x$ is unbounded as $x \to \infty$, this means $b_N$ can be chosen such that $b_N \to \infty$.
	Thus the final measure is $\frac{1}{2}\delta_{-tb_N} + \frac{1}{2}\delta_{tb_N}$, which diverges.
	The issue here is a lack of compactness: the mass escapes to infinity.
	Alternatively, if $\phi(x) \to \infty$ as $x \to M$ for some $M \in (0,\infty)$, we can choose $b_N \to M$ and get a final measure that converges to $\frac{1}{2}\delta_{-tM} + \frac{1}{2}\delta_{tM}$, which is not a mean field equilibrium.
	The issue in that case is that $\phi$ is singular.
	Note that both of these examples violate the boundedness Assumption \ref{sigma0 bounded}.
	Below we will give examples that need not violate this assumption, though they will assume singular data.

	\subsection{Non-uniqueness for both kinds of equilibrium}
	
	\label{sec:nonuniqueness}
	
	In this section we give examples for which both mean field and $N$-player games can have multiple equilibria.
	First, let us show at least one generic condition under which it is possible to derive an $N$-player equilibrium from a mean field equilibrium.
	\begin{proposition} \label{pr:N player from mfg}
		Suppose $\sigma$ is a solution of \eqref{eq:equilibrium}.
		Let $m$ be a probability measure with compact support and let $\bm{x}^N = (x_1^N,\ldots,x_N^N) \in (\bb{R}^d)^N$ be a sequence such that each $x_j^N$ is in the support of $m$ and $W_2\del{\frac{1}{N}\sum_{j=1}^N \delta_{x_j^N},m} \to 0$.
		Define $m_0 = x(0,t,\sigma,\cdot)_\sharp m$.
		Assume the function $\sigma_0$ is constant in a neighborhood of $m_0$.
		Then there exists $N_0$ sufficiently large so that if $N \geq N_0$, then $\sigma_i = \sigma$ for all $i$ gives a solution to \eqref{eq:N player equilibrium}.
	\end{proposition}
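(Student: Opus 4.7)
The plan is to interpret the proposition as a consequence of three facts: (i) the mean-field equilibrium $\sigma$ pushes $m$ forward to $m_0$ via the optimal flow, (ii) the $(N-1)$-player empirical pushforwards that appear in \eqref{eq:N player equilibrium} concentrate on $m_0$ as $N\to\infty$, uniformly in the index $i$, and (iii) $\sigma_0$ is locally constant at $m_0$, so the value can only be $\sigma$ for large $N$.

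First I would set $T(x) := x(0,t,\sigma,x)$, so that $m_0 = T_{\sharp}m$ and, after substituting the candidate $\sigma_i=\sigma$, the condition \eqref{eq:N player equilibrium} reduces to showing that
\[
\sigma_0\!\left(\mu^{N,i}_0\right)=\sigma,\qquad \mu^{N,i}_0:=\frac{1}{N-1}\sum_{j\neq i}\delta_{T(x_j^N)},
\]
for every $i$ and every sufficiently large $N$. Since $\sigma_0$ is assumed constant in a $W_2$-neighborhood of $m_0$, and $\sigma_0(m_0)=\sigma$ by hypothesis, it suffices to prove that
\[
\max_{1\leq i\leq N}W_2\!\left(\mu^{N,i}_0,m_0\right)\xrightarrow[N\to\infty]{}0.
\]

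To establish this, I would split the distance as
\[
W_2(\mu^{N,i}_0,m_0)\leq W_2(\mu^{N,i}_0,T_{\sharp}\mu^N)+W_2(T_{\sharp}\mu^N,m_0),
\]
where $\mu^N:=\frac{1}{N}\sum_{j=1}^N\delta_{x_j^N}$. For the second term, Assumption \ref{b regular} yields, via Gr\"onwall applied to \eqref{optimal trajectory}, that $T$ is globally Lipschitz with constant $e^{L_b t}$, whence $W_2(T_{\sharp}\mu^N,T_{\sharp}m)\leq e^{L_b t}W_2(\mu^N,m)\to 0$ by hypothesis. For the first term I would exhibit an explicit coupling: keep the diagonal mass $\frac{1}{N}\delta_{T(x_j^N)}$ for $j\neq i$, and transport the remaining $\frac{1}{N}\delta_{T(x_i^N)}$ uniformly onto the leftover $\frac{1}{N(N-1)}\sum_{j\neq i}\delta_{T(x_j^N)}$. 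Because $\mathrm{supp}(m)$ is compact and $T$ is continuous, the image $T(\mathrm{supp}(m))$ is contained in a ball of some fixed radius $M$, so the cost of this coupling is bounded by $\tfrac{4M^2}{N}$, independently of $i$. Hence $W_2(\mu^{N,i}_0,T_{\sharp}\mu^N)\leq 2M/\sqrt{N}$ uniformly in $i$.

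Combining the two bounds gives the desired uniform convergence, and choosing $N_0$ so that the right-hand side is smaller than the radius of the neighborhood on which $\sigma_0\equiv\sigma$ finishes the proof. The only nontrivial step is the first-term estimate, which is the discrete fact that removing one atom from a size-$N$ empirical measure is an $O(1/\sqrt N)$ perturbation in $W_2$; everything else is a direct application of the Lipschitz dependence of the optimal flow on the initial position granted by Assumption \ref{b regular}.
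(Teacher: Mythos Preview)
Your proposal is correct and follows essentially the same approach as the paper: the paper sets $y_j^N=T(x_j^N)$, uses the Lipschitz property of $T$ to get $W_2(\frac{1}{N}\sum_j\delta_{y_j^N},m_0)\to 0$, and then bounds $W_2(\frac{1}{N}\sum_j\delta_{y_j^N},\frac{1}{N-1}\sum_{j\neq i}\delta_{y_j^N})^2\leq \frac{D^2}{N}$ via the very same coupling you describe, with $D$ the diameter of $\operatorname{supp}m_0$ in place of your $2M$. The only cosmetic difference is that you spell out explicitly that $\sigma_0(m_0)=\sigma$ from \eqref{eq:equilibrium}, which the paper leaves implicit.
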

	
	\begin{proof}
		Let $y_i^N = x(0,t,\sigma,x_i^N)$.
		Since $x(0,t,\sigma,\cdot)$ is Lipschitz, it follows that $W_2\del{\frac{1}{N}\sum_{j=1}^N \delta_{y_j^N},m_0} \to 0$, and also that $m_0$ has compact support.
		Notice that each $y_j^N$ is in the support of $m_0$.
		It follows that
		\begin{equation}
			W_2\del{\frac{1}{N}\sum_{j=1}^N \delta_{y_j^N},\frac{1}{N-1}\sum_{j\neq i} \delta_{y_j^N}}^2
			\leq \frac{1}{N(N-1)}\sum_{j \neq i} \abs{y_j - y_i}^2
			\leq \frac{D^2}{N},
		\end{equation}
		where $D$ is the diameter of the support of $m_0$.
		Let $U$ be a neighborhood of $m_0$ such that $\sigma_0$ is constant on $U$.
		It follows that there exists $N_0$ such that if $N \geq N_0$, we have $\frac{1}{N-1}\sum_{j\neq i} \delta_{y_j^N} \in U$ for all $i = 1,\ldots,N$, which implies
		\begin{equation}
			\sigma = \sigma_0\del{\frac{1}{N-1}\sum_{j\neq i} \delta_{y_j^N}}
			= \sigma_0\del{\frac{1}{N-1}\sum_{j\neq i} \delta_{x(0,t,\sigma,x_j^N)}}.
		\end{equation}
		Thus $\sigma_i = \sigma$ solves system \eqref{eq:N player equilibrium}.
	\end{proof}
	
	It is straightforward to find examples of non-uniqueness.
	Let us consider the simple case where $d = 1$, $H(x,p) = \frac{1}{2}p^2$, and $g(\sigma,x) = \sigma x$.
	Then optimal trajectories are lines with $x(0,t,\sigma,x) = x - \sigma t$.
	The mean field Nash equilibrium condition becomes
	\begin{equation}
		\sigma = \sigma_0((\tau_{-\sigma t})_\sharp m),
	\end{equation}
	where $\tau_y(x) := x+y$.
	We will furthermore specify that $\sigma_0(m)$ depends only on the barycenter of $m$, i.e.
	\begin{equation} \label{sigma0=f}
		\sigma_0(m) = f\del{\int_{\bb{R}} x \dif m(x)}.
	\end{equation}
	Hence the Nash equilibrium condition becomes
	\begin{equation} \label{simple equilibrium}
		\sigma = f\del{\int_{\bb{R}} x \dif m(x) - \sigma t}.
	\end{equation}
	As explained in \cite{graber2024some}, solutions of \eqref{simple equilibrium} are closely related to solutions of the Burgers equation
	\begin{equation}
		\partial_t \sigma + \sigma \partial_\xi\sigma = 0, \quad \sigma(0,\xi) = f(\xi),
	\end{equation}
	where the variable $\xi$ stands in for $\int_{\bb{R}} x \dif m(x)$.
	We deduce the existence of multiple equilibria in the well-known cases where the Burgers equation has crossing characteristics, namely whenever $f$ is not a monotone increasing function.
	To give a concrete example, let us consider
	\begin{equation} \label{f=1,0}
		f(\xi) = \begin{cases}
			1, &\text{if}~\xi < 0,\\
			0, &\text{if}~\xi \geq 0.
		\end{cases}
	\end{equation}
	If $0 \leq \int_{\bb{R}} x \dif m(x) < t$, then $\sigma = 0$ and $\sigma = 1$ are both solutions of \eqref{simple equilibrium}.
	
	Moreover, if $0 < \int_{\bb{R}} x \dif m(x) < t$, then by Proposition \ref{pr:N player from mfg} we see that the corresponding $N$-player game can be solved with either $\sigma_i = 0$ for all $i$ or $\sigma_i = 1$ for all $i$, for $N$ sufficiently large.
	There are thus two distinct equilibria for the $N$-player game, each of which converges to one of the two mean field equilibria.
	However, there may be other equilibria for the $N$-player game, which do not converge to either of the mean field equilibria.
	We now present one such example.
	
	\begin{proposition} \label{prop:3 equil}
		Assume that $x(0,t,\sigma,x) = x - \sigma t$ and $\sigma_0$ is given by \eqref{sigma0=f} and \eqref{f=1,0}.
		Fix $t > 0$.
		Let $a,b > 0$ be given.
		For $N \in \bb{N}$, $j = 1,\ldots,N$ define
		\begin{equation}
			\begin{split}
				J_N &:= \ceil{\frac{(N-1)b}{a+b}},\\
				x_j^N &:= \begin{cases}
					a+t &\text{if}~j \leq J_N,\\
					-b &\text{if}~j > J_N
				\end{cases}
			\end{split}
		\end{equation}
		where $\ceil{x}$ means the smallest integer greater than or equal to $x$.
		Then for $N$ large, there are at least three solutions to the $N$-player Nash equilibrium condition \eqref{eq:N player equilibrium}, which are given by
		\begin{enumerate}
			\item $\sigma_i = 0$ for $i = 1,\ldots,N$;
			\item $\sigma_i = 1$ for $i = 1,\ldots,N$; and
			\item $\sigma_i = 1$ for $i = 1,\ldots,J_N$ and $\sigma_i = 0$ for $i = J_N + 1,\ldots,N$.
		\end{enumerate}
	\end{proposition}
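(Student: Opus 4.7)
The plan is to reduce condition \eqref{eq:N player equilibrium} to a collection of sign checks and verify each candidate using two elementary inequalities extracted from the ceiling definition of $J_N$.

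Since $x(0,t,\sigma,x) = x - \sigma t$ and $f$ is given by \eqref{f=1,0}, the equilibrium condition at index $i$ is equivalent to requiring $\sigma_i = 1$ precisely when $S_i := \sum_{j\neq i}(x_j - \sigma_j t) < 0$, and $\sigma_i = 0$ precisely when $S_i \geq 0$. From $J_N = \lceil (N-1)b/(a+b)\rceil$ I would record two facts: $J_N a \geq (N-J_N-1)b$ (call this $\mathrm{L}$, from $J_N \geq (N-1)b/(a+b)$) and $(J_N-1)a < (N-J_N)b$ (call this $\mathrm{U}$, from $J_N - 1 < (N-1)b/(a+b)$). Together they trap the ``ceiling slack'' $(N-J_N)b - (J_N-1)a$ inside the interval $(0, a+b]$, and this uniform boundedness is the essential quantitative input of the argument.

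Candidate~(3) is the tightest and works for every $N$: with its prescribed $\sigma_j$ the quantities $x_j - \sigma_j t$ equal $a$ for $j \leq J_N$ and $-b$ for $j > J_N$, so $S_i = (J_N-1)a - (N-J_N)b$ when $i \leq J_N$, which is negative by $\mathrm{U}$ and matches $\sigma_i = 1$, and $S_i = J_N a - (N-J_N-1)b$ when $i > J_N$, which is nonnegative by $\mathrm{L}$ and matches $\sigma_i = 0$. For candidate~(1), $S_i = \sum_{j \neq i} x_j$; the subcase $i > J_N$ follows immediately from $\mathrm{L}$ together with $t > 0$, while for $i \leq J_N$ one rearranges as $S_i = (J_N-1)t - [(N-J_N)b - (J_N-1)a] \geq (J_N-1)t - (a+b)$, which is nonnegative as soon as $J_N \geq 1 + (a+b)/t$. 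Candidate~(2) is handled symmetrically: the subcase $i \leq J_N$ is immediate from $\mathrm{U}$, and for $i > J_N$ one bounds $S_i = -(N-J_N-1)t + [J_N a - (N-J_N-1)b] \leq -(N-J_N-1)t + (a+b)$, which is negative once $N - J_N - 1 > (a+b)/t$. Both of these large-$N$ thresholds are satisfied since $J_N$ and $N - J_N$ grow without bound.

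The only real obstacle is the bookkeeping for candidates~(1) and~(2): one must notice that the ceiling slack is uniformly bounded by $a+b$, a quantity depending only on $a$ and $b$, so that it is eventually dominated by the linearly growing quantities $(J_N-1)t$ and $(N-J_N-1)t$. Beyond this observation the argument is a direct case-by-case verification.
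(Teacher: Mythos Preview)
Your proof is correct. For candidate~(3) your argument is essentially identical to the paper's: both reduce \eqref{eq:N player equilibrium} to the sign of $\sum_{j\neq i}(x_j-\sigma_j t)$ and check the two cases via the ceiling inequalities you call $\mathrm{L}$ and $\mathrm{U}$. For candidates~(1) and~(2), however, the paper takes a less hands-on route: it observes that the empirical measures $\mu^N_{\bm{x}^N}$ converge to $m=\lambda\delta_{a+t}+(1-\lambda)\delta_{-b}$ with barycenter $t\lambda\in(0,t)$, and then invokes Proposition~\ref{pr:N player from mfg} (local constancy of $\sigma_0$ near the corresponding final measures) to conclude that the constant profiles $\sigma_i\equiv 0$ and $\sigma_i\equiv 1$ are equilibria for large $N$. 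Your direct computation---bounding the ceiling slack uniformly by $a+b$ and letting the linearly growing terms $(J_N-1)t$ and $(N-J_N-1)t$ dominate---is more elementary and self-contained, and in fact the paper parenthetically remarks that such a direct check is possible. The trade-off is that the paper's route illustrates how the abstract Proposition~\ref{pr:N player from mfg} applies, while yours gives an explicit large-$N$ threshold depending only on $a,b,t$.
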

	\begin{proof}
		Set $\lambda = \frac{b}{a+b}$.
		Let $m = \lambda \delta_{a+t} + (1-\lambda)\delta_{-b}$, where each $\delta_x$ is a Dirac mass at $x$.
		Then we have
		\begin{equation}
			\frac{1}{N}\sum_{j=1}^N \delta_{x_j^N} = \frac{J_N}{N}\delta_{a+t} + \del{1 - \frac{J_N}{N}}\delta_{-b} \to m
		\end{equation}
		as $N \to \infty$.
		We also have
		\begin{equation}
			\int_{\bb{R}} x \dif m(x) = \lambda(a+t) - (1-\lambda)b = t\lambda \in (0,t),
		\end{equation}
		so Proposition \ref{pr:N player from mfg} implies that there are at least two equilibria for $N$ large: $\sigma_i = 0$ for all $i$ or $\sigma_i = 1$ for all $i$ (this can easily be checked directly).
		
		Now set $\sigma_i = 1$ for $i = 1,\ldots,J_N$ and $\sigma_i = 0$ for $i = J_N + 1,\ldots,N$.
		We will now show this is also an equilibrium.
		First rewrite \eqref{eq:N player equilibrium} as
		\begin{equation} \label{simple N player}
			\sigma_i = f\del{\frac{1}{N-1}\sum_{j \neq i} (x_j - \sigma_j t)} = \begin{cases}
				1, &\text{if}~\sum_{j \neq i} (x_j - \sigma_j t) < 0,\\
				0, &\text{if}~\sum_{j \neq i} (x_j - \sigma_j t) \geq 0
			\end{cases}
		\end{equation}
		using \eqref{sigma0=f} and \eqref{f=1,0}.
		To see that \eqref{simple N player} holds true, note that $\sum_{j = 1}^N (x_j - \sigma_j t) = \sum_{j=1}^{J_N} (a+t-t) + \sum_{j=J_N+1}^{N} (-b) = J_N a - (N-J_N)b$.
		We deduce that
		\begin{equation} \label{eq:sumxj-sjt}
			\begin{split}
				\sum_{j \neq i} (x_j - \sigma_j t) &= \begin{cases}
					(J_N-1) a - (N - J_N)b, &\text{if}~i \leq J_N,\\
					J_N a - (N - J_N - 1)b,
					&\text{if}~i > J_N
				\end{cases}\\
				&= \begin{cases}
					(a+b)(J_N-1) - (N-1)b, &\text{if}~i \leq J_N,\\
					(a+b)J_N - (N-1)b,
					&\text{if}~i > J_N.
				\end{cases}
			\end{split}
		\end{equation}
		Since $J_N = \ceil{\frac{(N-1)b}{a+b}}$, we see that $\sum_{j \neq i} (x_j - \sigma_j t) < 0$ for $i \leq J_N$ and $\sum_{j \neq i} (x_j - \sigma_j t) \geq 0$ for $i > J_N$, so \eqref{simple N player} holds.
		Thus we have an equilibrium.
	\end{proof}
	
	The first two equilibria given by Proposition \ref{prop:3 equil} naturally give final measures that converge to mean field equilibrium measures.
	However, the third equilibrium does not.
	In this case the final measure is given by
	\begin{equation} \label{anamolous limit}
		\frac{1}{N}\sum_{j = 1}^N \delta_{x_j - \sigma_j t} = \frac{J_N}{N}\delta_a + \del{1-\frac{J_N}{N}}\delta_{-b}
		\overset{N \to \infty}{\longrightarrow} \lambda \delta_a + (1-\lambda)\delta_{-b}.
	\end{equation}
	Yet the two mean field equilibria correspond to the measures
	\begin{equation}
		\lambda \delta_{a+t} + (1-\lambda)\delta_{-b} \quad (\text{for} \ \sigma = 0) \quad \text{and} \quad \lambda \delta_{a} + (1-\lambda)\delta_{-b-t} \quad (\text{for} \ \sigma = 1). 
	\end{equation}
	Thus the limit of the third equilibrium is a sort of mixture of two mean field equilibria.
	The interpretation is that the players at point $a+t$ expect the parameter $\sigma$ to be $1$, hence they should move to the left by $\sigma t = t$, whereas the players at point $-b$ expect $\sigma = 0$, hence they should not move.
	
	Notice that the barycenter of the anomalous limit $\lambda \delta_a + (1-\lambda)\delta_{-b}$ is precisely zero, which is the point of discontinuity of the function $f$.
	The existence of the first two equilibria in Proposition \ref{prop:3 equil} is not a consequence of the discontinuity, but merely of the fact that $f$ is decreasing; this is the same phenomenon as when characteristics of the Burgers equation cross.
	It is this third, anomalous equilibrium that emerges as a result of the discontinuity.
	When $f$ is continuous, the results of Lacker \cite{lacker2016general} prove that equilibria for $N$-player games must converge to (weak) mean field equilibria.
	
	Let us generalize Proposition \ref{prop:3 equil} to measures whose support consists of more than two points, and to more general optimal trajectories $x(s,t,\sigma,x)$.
	
	\begin{proposition} \label{prop:general 3 equil}
		Let $d=1$, and let $b(t,\sigma,x)$ satisfy Assumptions \ref{b regular} and \ref{monotonicity b} (Assumption \ref{diagonal} holds trivially as $d = 1$).
		
		Fix $t > 0$.
		Let $\tilde \mu$ and $\tilde \nu$ be probability measures of compact support such that
		\begin{equation} \label{supports}
			\operatorname{supp} \tilde\mu \subset \intoc{-\infty,0}, \quad \operatorname{supp} \tilde\nu \subset \intco{c,\infty}
		\end{equation}
		for some $c > 0$, and such that $\int x \dif \tilde \mu(x) < 0$.
		Assume also that $\sigma_0$ is given by \eqref{sigma0=f} and \eqref{f=1,0}.
		Let $\lambda \in (0,1)$ be chosen such that 
		\begin{equation} \label{Normalization lambda}
			\int_{\bb{R}} x \dif\del{(1-\lambda) \tilde\mu + \lambda \tilde \nu} = 0.
		\end{equation}
		Set $\mu = x(t,0,0,\cdot)_\sharp \tilde \mu$, $\nu = x(t,0,1,\cdot)_\sharp \tilde \nu$, and $m = (1-\lambda) \mu + \lambda \nu$.
		Then there exists an infinite sequence $N_k$ such that for $N = N_k$, there exists a discrete measure $\mu^{N_k}_{\bm{x}_k}$ such that $W_2(\mu^{N_k}_{\bm{x}_k},m) \to 0$ and such that
		there are at least three solutions to the $N$-player Nash equilibrium condition \eqref{eq:N player equilibrium}, which are given by
		\begin{enumerate}
			\item $\sigma_i = 0$ for $i = 1,\ldots,N$;
			\item $\sigma_i = 1$ for $i = 1,\ldots,N$; and
			\item $\sigma_i = 1$ for $i = 1,\ldots,J$ and $\sigma_i = 0$ for $i = J + 1,\ldots,N$
		\end{enumerate}
		where $1 < J < N$ satisfies $J \approx \lambda N$.
	\end{proposition}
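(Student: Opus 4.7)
My approach mimics Proposition \ref{prop:3 equil}, replacing the two explicit atoms by a discretization of the two-component decomposition $m = (1-\lambda)\mu + \lambda\nu$. For each large $N$, fix an integer $J = J_N$ close to $\lambda N$ and choose points $\tilde y_1^N,\ldots,\tilde y_N^N$ with $\tilde y_j^N \in \operatorname{supp}\tilde\nu$ for $j \leq J$ and $\tilde y_j^N \in \operatorname{supp}\tilde\mu$ for $j > J$, whose empirical measures converge to $\tilde\nu$ and $\tilde\mu$ respectively in $W_2$. Then set
\begin{equation*}
x_j^N = \begin{cases} x(t,0,1,\tilde y_j^N), & j \leq J_N,\\ x(t,0,0,\tilde y_j^N), & j > J_N. \end{cases}
\end{equation*}
By construction $\mu^N_{\bm{x}^N} \to (1-\lambda)\mu + \lambda\nu = m$ in $W_2$. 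The additional ingredient I would arrange along a suitable subsequence $\{N_k\}$ is the ``pull-back sum'' condition
\begin{equation*}
0 \leq S_N := \sum_{j=1}^N \tilde y_j^N < c.
\end{equation*}
Because the normalization \eqref{Normalization lambda} gives $S_N/N \to 0$, because incrementing $J$ by one shifts $S_N$ by approximately $\int x \dif\tilde\nu - \int x \dif\tilde\mu > 0$, and because we can additionally perturb any single point within its support, a pigeonhole-type argument yields integer choices for which $S_N \in [0,c)$ for infinitely many $N$.

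Granting this, the mixed profile $\sigma_i = 1$ for $i \leq J$, $\sigma_i = 0$ for $i > J$ is an equilibrium because the backward trajectory of player $j$ under its own $\sigma_j$ returns it exactly to $\tilde y_j^N$, so the leave-one-out sum equals $S_N - \tilde y_i^N$. By \eqref{sigma0=f}--\eqref{f=1,0}, the equilibrium condition then reduces to $S_N - \tilde y_i^N < 0$ for $i \leq J$ (which follows from $\tilde y_i^N \geq c$ and $S_N < c$) and $S_N - \tilde y_i^N \geq 0$ for $i > J$ (which follows from $\tilde y_i^N \leq 0$ and $S_N \geq 0$). The uniform profiles $\sigma_i \equiv 0$ and $\sigma_i \equiv 1$ are verified using Assumption \ref{monotonicity b}: since $\sigma \mapsto x(t,0,\sigma,y)$ is strictly increasing in $\sigma$, backing up a $\nu$-point with $\sigma=0$ ends strictly to the right of its $\tilde\nu$-preimage, and continuity on the compact set $\operatorname{supp}\tilde\nu$ yields a uniform gap $\eta > 0$. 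The $J \sim \lambda N$ points that do not back-propagate to their true preimages then contribute a positive bias of order $J\eta$ to the leave-one-out sum, dominating both $S_N$ and the omitted term for $N$ large; hence the sum is $\geq 0$, so $f$ evaluates to $0$, confirming $\sigma_i \equiv 0$. The argument for $\sigma_i \equiv 1$ is symmetric, using the negative gap $x(0,t,1,x(t,0,0,\tilde y)) - \tilde y \leq -\eta'$ uniformly on $\operatorname{supp}\tilde\mu$.

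The main obstacle is the first step, namely securing $0 \leq S_N < c$ along an infinite subsequence. In Proposition \ref{prop:3 equil} this was handled explicitly by the choice $J_N = \lceil (N-1)b/(a+b) \rceil$, which forces the analogous sum into a half-open interval of length $a+b$ around $0$; the more general case here requires combining the integer-step adjustment of $J$ with a continuous perturbation of a single point, so as to fit $S_N$ into the possibly narrow window $[0,c)$. Once this window is secured and the atomic supports of $\tilde\mu$ and $\tilde\nu$ are sufficiently well separated (as guaranteed by \eqref{supports}), the verification of all three equilibria is essentially routine, tracing the same logic as the explicit two-atom case.
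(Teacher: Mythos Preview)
Your overall blueprint matches the paper's: discretize each component of $m=(1-\lambda)\mu+\lambda\nu$ separately, push back through the flow to define $x_j^N$, and verify the three profiles via the ``pull-back sum'' $S_N=\sum_j \tilde y_j^N$. The key technical differences are worth noting.

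For securing $S_N\in[0,c)$, the paper avoids your pigeonhole-plus-perturbation argument entirely by using the \emph{optimal discretization} of a measure on $\bb{R}$ (constructed in the appendix), whose defining feature is that it preserves barycenters \emph{exactly}: $\tfrac{1}{J}\sum_{j\le J}\tilde y_j=\int x\,\dif\tilde\nu$ and similarly for $\tilde\mu$. This makes $S_N=(J-\lambda N)\delta$ with $\delta=\int x\,\dif(\tilde\nu-\tilde\mu)\ge c$, so the window condition reduces to $\lceil\lambda N\rceil-\lambda N<c/\delta$, which holds for infinitely many $N$ by elementary considerations on fractional parts. Your perturbation idea would also work, but as written it is vague: without a discretization that controls barycenters, the claim that ``incrementing $J$ by one shifts $S_N$ by approximately $\delta$'' carries an error that could swamp the window $[0,c)$, and the single-point perturbation range is only the diameter of a support, which need not cover the gap. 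The paper's tool makes this step a one-line computation.

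For the constant profiles, the paper first checks that $\sigma=0$ and $\sigma=1$ are genuine mean field equilibria (using the strict monotonicity $\partial_\sigma x(0,t,\sigma,x)<0$ exactly as you do), and then invokes Proposition~\ref{pr:N player from mfg}, which transfers any mean field equilibrium at which $\sigma_0$ is locally constant to the $N$-player game for large $N$. Your direct argument---bounding the leave-one-out sum by $S_N+J\eta-O(1)$ via a uniform gap $\eta>0$ on the compact support---is correct and arguably more self-contained; the paper's route is shorter only because Proposition~\ref{pr:N player from mfg} has already been proved.
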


	\begin{proof}
		Define
		\begin{equation} \label{difference of tilde nu mu}
			\delta := \int x \dif (\tilde \nu - \tilde \mu)(x)
			\geq c.
		\end{equation}
		For a given, $N$ we will choose $J$ such that
		\begin{equation} \label{J condition}
			0 \leq J - \lambda N < \frac{c}{\delta}.
		\end{equation}
		In other words, we choose $J = \ceil{\lambda N}$, and we must have $N$ such that $\ceil{\lambda N} - \lambda N < \frac{c}{\delta}$.
		Note that there are infinitely many $N$ for which this holds.
		
		Let $\frac{1}{J}\sum_{j=1}^J \delta_{y_j}$ be the optimal discretization with $J$ points of $\tilde \nu$, given in Section \ref{sec:optimal discretization}; likewise, let $\frac{1}{N-J}\sum_{j=J+1}^N \delta_{y_j}$ be the optimal discretization with $N-J$ points of $\tilde \mu$.
		For $j = 1,\ldots,J$ we define $x_j = x(t,0,1,y_j)$ so that $y_j = x(0,t,1,x_j)$; likewise, for $j = J+1,\ldots,N$ we define $x_j = x(t,0,0,y_j)$ so that $y_j = x(0,t,0,x_j)$.
		
		Our first objective is to show that $W_2\del{\frac{1}{N}\sum_{j=1}^N \delta_{x_j},m} \to 0$ as $N \to \infty$.
		We have that
		\begin{equation}
			\begin{split}
				W_2\del{\frac{1}{J}\sum_{j=1}^J \delta_{x_j},\nu}
				&\leq \operatorname{Lip}(x(t,0,1,\cdot))W_2\del{\frac{1}{J}\sum_{j=1}^J \delta_{y_j},\tilde\nu},\\
				W_2\del{\frac{1}{N-J}\sum_{j=J+1}^N \delta_{x_j},\mu}
				&\leq \operatorname{Lip}(x(t,0,0,\cdot))W_2\del{\frac{1}{N-J}\sum_{j=J+1}^N \delta_{x_j},\tilde \mu},
			\end{split}
		\end{equation}
		and by the results of Section \ref{sec:optimal discretization} these go to zero as $N \to \infty$.
		
		Now we claim that $\mu$ and $\nu$ have disjoint supports.
		To prove this, note that Assumption \ref{b regular} allows us to show that $x(s,t,\sigma,x)$ is increasing in the $x$-variable; indeed, it is straightforward to prove that
		\begin{equation}
			\partial_x x(s,t,\sigma,x) = \exp\del{\int_t^s \partial_x b(\tau,\sigma,x(\tau,t,\sigma,x))\dif \tau} > 0.
		\end{equation}
		It follows from \eqref{supports} that
		\begin{equation}
			\operatorname{supp} \mu \subset \intoc{-\infty,x(t,0,0,0)}, \quad
			\operatorname{supp} \nu \subset \intco{x(t,0,1,c),\infty}.
		\end{equation}
		By \eqref{partial sigma x} with Assumptions \ref{b regular} and \ref{monotonicity b} it follows that $\partial_\sigma x(t,0,\sigma,x) > 0$, and so in particular
		\begin{equation}
			x(t,0,1,c) > x(t,0,0,c) > x(t,0,0,0).
		\end{equation}
		Thus $\operatorname{supp} \mu \cap \operatorname{supp} \nu = \emptyset$.
		
		Now if $T_\nu : \bb{R} \to \bb{R}$ is a transport map such that $(T_\nu)_\sharp \nu = \frac{1}{J}\sum_{j=1}^J \delta_{x_j}$ and $T_\mu : \bb{R} \to \bb{R}$ is a transport map such that $(T_\mu)_\sharp \nu = \frac{1}{N-J}\sum_{j=1}^J \delta_{x_j}$, then we can construct a transport map $T:\bb{R} \to \bb{R}$ such that $T = T_\nu$ on $\operatorname{supp} \nu$ and $T = T_\mu$ on $\operatorname{supp} \mu$.
		Then $T_\sharp m = \frac{\lambda}{J}\sum_{j=1}^J \delta_{x_j} + \frac{1-\lambda}{N-J}\sum_{j=1}^J \delta_{x_j}$ and
		\begin{equation}
			\int \abs{T(x) - x}^2 \dif m(x) = \lambda \int \abs{T_\nu(x) - x}^2 \dif \nu(x)
			+ (1-\lambda)\int \abs{T_\mu(x) - x}^2 \dif \mu(x).
		\end{equation}
		Since $T_\mu$ and $T_\nu$ are arbitrary, it follows that
		\begin{equation}
			W_2\del{\frac{\lambda}{J}\sum_{j=1}^J \delta_{x_j} + \frac{1-\lambda}{N-J}\sum_{j=1}^J \delta_{x_j},m}^2
			\leq \lambda W_2\del{\frac{1}{J}\sum_{j=1}^J \delta_{x_j},\nu} + (1-\lambda)W_2\del{\frac{1}{N-J}\sum_{j=1}^J \delta_{x_j},\mu}^2,
		\end{equation}
		which becomes arbitrarily small as $N \to \infty$.
		Finally, we construct a transport plan from $\frac{\lambda}{J}\sum_{j=1}^J \delta_{x_j} + \frac{1-\lambda}{N-J}\sum_{j=1}^J \delta_{x_j}$ to $\frac{1}{N}\sum_{j=1}^N \delta_{x_j}$ as follows.
		Noting that $\frac{\lambda}{J} \leq \frac{1}{N}$ and $\frac{1-\lambda}{N-J} \geq \frac{1}{N}$, we let $\pi$ be a discrete measure such that 
		\begin{equation}
			\begin{split}
				\pi(\{x_j,x_j\}) &= \frac{\lambda}{J}, \quad j = 1,\ldots,J,\\
				\pi(\{x_j,x_j\}) &= \frac{1}{N}, \quad j = J+1,\ldots,N,\\
				\pi(\{x_j,x_k\}) &= \frac{J - \lambda N}{NJ(N-J)}, \quad j = J+1,\ldots, N, \ k = 1,\ldots,J.
			\end{split}
		\end{equation}
		This is a probability measure that couples $\frac{\lambda}{J}\sum_{j=1}^J \delta_{x_j} + \frac{1-\lambda}{N-J}\sum_{j=1}^J \delta_{x_j}$ to $\frac{1}{N}\sum_{j=1}^N \delta_{x_j}$, and so
		\begin{equation} \label{discrete measures estimate}
			\begin{split}
				W_2\del{\frac{\lambda}{J}\sum_{j=1}^J \delta_{x_j} + \frac{1-\lambda}{N-J}\sum_{j=1}^J \delta_{x_j},\frac{1}{N}\sum_{j=1}^N \delta_{x_j}}^2
				&\leq \int \abs{x-y}^2 \dif \pi(x,y)\\
				&= \sum_{j=J+1}^N \sum_{k=1}^J \abs{x_j - x_k}^2 \frac{J-\lambda N}{NJ(N-J)}\\
				&\leq \sum_{j=J+1}^N \sum_{k=1}^J 2(x_j^2 + x_k)^2 \frac{J-\lambda N}{NJ(N-J)}\\
				&= \frac{2(J-\lambda N)}{N(N-J)}\sum_{j=J+1}^N x_j^2 
				+ \frac{2(J-\lambda N)}{NJ}\sum_{k=1}^J x_k^2.
			\end{split}
		\end{equation}
		Since $J - \lambda N \leq 1$, $\frac{1}{N-J}\sum_{j=J+1}^N x_j^2 \to \int x^2 \dif \mu(x)$ and $\frac{1}{J}\sum_{j=1}^J x_j^2 \to \int x^2 \dif \nu(x)$ as $N \to \infty$, it follows that the right-hand side of \eqref{discrete measures estimate} goes to zero as $N \to \infty$.
		We conclude that $W_2\del{\frac{1}{N}\sum_{j=1}^N \delta_{x_j},m} \to 0$ as $N \to \infty$.
		
		Now we prove that the three equilibria defined in the Proposition are indeed solutions to \eqref{eq:N player equilibrium}.
		Equivalently, we need to show
		\begin{equation} \label{general 3 equil cond}
			\sigma_i = f\del{\frac{1}{N-1}\sum_{j \neq i} x(0,t,\sigma_j,x_j)} = \begin{cases}
				1, &\text{if}~\sum_{j \neq i}x(0,t,\sigma_j,x_j) < 0,\\
				0, &\text{if}~\sum_{j \neq i}x(0,t,\sigma_j,x_j) \geq 0.
			\end{cases}
		\end{equation}
		
		To see that the constant $\sigma_i = 0$ is an equilibrium, recall that, by \eqref{partial sigma x}, $x(0,t,\sigma,x)$ is strictly decreasing with respect to $\sigma$, hence
		\begin{equation}
			\begin{split}
				\int x \dif \del{x(0,t,0,\cdot) \sharp m}(x)
				&= \int x(0,t,0,x) \dif m(x)\\
				&= \lambda \int x(0,t,0,x) \dif \nu(x)
				+ (1-\lambda)\int x(0,t,0,x) \dif \mu(x)\\
				&> \lambda \int x(0,t,1,x) \dif \nu(x)
				+ (1-\lambda)\int x(0,t,0,x) \dif \mu(x)\\
				&= \lambda \int x \dif \tilde \nu(x) + (1-\lambda) \int x \dif \tilde \mu(x) = 0.
			\end{split}
		\end{equation}
		Hence $\sigma_0(\tilde m) = f(\int x \dif \tilde m(x)) = 0$ for every $\tilde m$ in a neighborhood of $x(0,t,0,\cdot)_\sharp m$.
		We now appeal to Proposition \ref{pr:N player from mfg} to see that $\sigma_i = 0$ is an equilibrium for $N$ large enough.
		An analogous argument shows that the constant $\sigma_i = 1$ is also an equilibrium for $N$ large enough.
		
		Finally, we let $\sigma_i = 1$ for $i = 1,\ldots,J$ and $\sigma_i = 0$ for $i = J+1,\ldots,N$.
		To show that \eqref{general 3 equil cond} holds, we need to show that
		\begin{equation}
			\sum_{j \neq i} x(0,t,\sigma_j,x_j) \begin{cases}
				< 0, &\text{when}~i \leq J,\\
				\geq 0, &\text{when}~i > J.
			\end{cases}
		\end{equation}
		Equivalently, we need to show
		\begin{equation}
			\sum_{j = 1}^J x(0,t,1,x_j)
			+ \sum_{j = J+1}^N x(0,t,0,x_j)
			\begin{cases}
				< x(0,t,1,x_i), &\text{when}~i \leq J,\\
				\geq x(0,t,0,x_i), &\text{when}~i > J,
			\end{cases}
		\end{equation}
		which, by recalling the definitions of $x_j$ in terms of $y_j$, is simply
		\begin{equation} \label{critical discrete inequality}
			\sum_{j = 1}^J y_j
			+ \sum_{j = J+1}^N y_j
			\begin{cases}
				< y_i, &\text{when}~i \leq J,\\
				\geq y_i, &\text{when}~i > J.
			\end{cases}
		\end{equation}
		We recall that $\frac{1}{J}\sum_{j=1}^J \delta_{y_j}$ is the optimal discretization of $\tilde \nu$ as given in Section \ref{sec:optimal discretization}, and by the construction there we can see that $y_i \geq c$ for all $i \leq J$ because $\operatorname{supp} \tilde \nu \subset \intco{c,\infty}$.
		Similarly, $\frac{1}{N-J}\sum_{j=J+1}^N \delta_{y_j}$ is the optimal discretization of $\tilde \mu$, and it follows that $y_i \leq 0$ for $i > J$.
		Hence a sufficient condition for \eqref{critical discrete inequality} is
		\begin{equation} \label{critical discrete sufficient}
			0 \leq \sum_{j = 1}^J y_j
			+ \sum_{j = J+1}^N y_j < c
		\end{equation}
		We also have, cf.~\eqref{same barycenter},
		\begin{equation}
			\frac{1}{J}\sum_{j = 1}^J y_j = \int x \dif \tilde \nu(x),
			\quad \frac{1}{N-J}\sum_{j=J+1}^N y_j = \int x \dif \tilde \mu(x).
		\end{equation}
		Hence \eqref{critical discrete sufficient} becomes
		\begin{equation}
			0 \leq J \int x \dif \tilde\nu(x) + (N-J)\int x \dif \tilde \mu(x) < c.
		\end{equation}
		Subtracting $N\lambda \int x \dif \tilde \nu(x) + N(1-\lambda)\int x \dif \tilde \mu(x) = 0$ from this inequality, we see that it is equivalent to \eqref{J condition}, where $\delta$ is given by \eqref{difference of tilde nu mu}.
		The proof is complete.
	\end{proof}

	\begin{remark}
		\begin{enumerate}
			\item There is nothing special about the values 1 and 0 for $\sigma$; replace the function $f$ with a function that jumps from $\alpha$ to $\beta$ with $\alpha > \beta$ and the result will remain essentially the same.
			\item Similarly, there is nothing special about where the jump occurs.
			If $f$ jumps at a point $\xi$ instead of at 0, then replace the intervals $\intoc{-\infty,0}$ and $\intco{c,\infty}$ in \eqref{supports} with $\intoc{-\infty,\xi}$ and $\intco{\xi + c,\infty}$, respectively; and choose $\lambda$ such that the right-hand side of \eqref{Normalization lambda} is $\xi$ instead of $\lambda$.
			\item The condition \eqref{supports} essentially defines a \emph{necessary} condition for the construction of a third equilibrium that is a mixture of the other two.
			This becomes clear by observing first that \eqref{critical discrete inequality} is a necessary condition, and in order to cover all possible cases it implies
			\begin{equation}
				\max \operatorname{supp} \tilde \mu \leq J \int x \dif \tilde\nu(x) + (N-J)\int x \dif \tilde \mu(x) < \min \operatorname{supp} \tilde \nu.
			\end{equation}
			Given the postulate that $J \approx \lambda N$, dividing by $N$ and letting $N \to \infty$, we derive condition \eqref{Normalization lambda}.
			Then, as a corollary, it becomes necessary that 
			\begin{equation}
				\max \operatorname{supp} \tilde \mu \leq 0 < \min \operatorname{supp} \tilde \nu,
			\end{equation}
			which implies \eqref{supports}.
			\item In light of this last point, the interpretation of Proposition \ref{prop:general 3 equil} is as follows: we have a population which is in fact split into two, and these are separated by a sufficiently large distance so as to confuse each side.
			That is, each side anticipates the wrong strategy from the other side, and this is made possible precisely because of the distance between them.
			\item As a result, the limit of this third equilibrium will not in fact be a mean field equilibrium, since it gives a measure that cannot be written as the push-forward of the initial measure through a map $x(0,t,\sigma,\cdot)$.
		\end{enumerate}
	\end{remark}
	
	\subsection{A selection principle for the $N$-player game} \label{sec:N player selection}
	In light of the discussion of this section, we will propose a modest ``selection principle" for the $N$-player game, which is that \emph{we should at least eliminate those equilibria that do not converge to a mean field equilibrium}.
	Generically, we expect all $N$-player equilibria to converge to a mean field equilibrium \cite{lacker2016general}, which we can see here by a direct argument.
	Suppose $\sigma_0$ is continuous and bounded, and let $m$ be a probability measure with compact support, with empirical measure $\frac{1}{N}\sum_{j=1}^N\delta_{x_j^N}$ converging to $m$.
	Suppose $\sigma_1^N,\ldots,\sigma_N^N$ is an $N$-player equilibrium, i.e.~a solution to \eqref{eq:N player equilibrium}.
	Notice that
	\begin{equation}
		\sigma_i^N = \sigma_0\del{x(0,t,\cdot,\cdot)_\sharp \frac{1}{N-1}\sum_{j \neq i}\delta_{\sigma^N_j,x_j^N}}.
	\end{equation}
	If we pass to a subsequence on which $\frac{1}{N}\sum_{j=1}^N\delta_{\sigma_j^N,x_j^N}$ has a cluster point, which we call $\mu$, then on that subsequence we get, following the proof of Proposition \ref{pr:N player from mfg}, 
	\begin{equation}
		\sigma_i^N \to \sigma_0\del{x(0,t,\cdot,\cdot)_\sharp \mu},
	\end{equation}
	uniformly in $i$.
	But this implies that $\mu = \delta_\sigma \times m$ where $\sigma$ satisfies
	\begin{equation}
		\sigma = \sigma_0\del{x(0,t,\sigma,\cdot)_\sharp m},
	\end{equation}
	i.e.~the $N$-player equilibrium converges to a mean field equilibrium.
	
	The examples where this fails are examples where $\sigma_0$ fails to satisfy either boundedness or continuity.
	The way in which they fail can be interpreted as introducing an asymmetry: two segments of the population are distinguished and choose complementary strategies to form an equilibrium.
	Therefore, our selection principle for $N$-player equilibrium is really an insistence on complete symmetry, such that the forming of ``factions'' is not allowed.

	\section{The ``trembling hand''}
	
	\label{sec:trembling hand}
	
	The purpose of this section is to investigate one method of selecting from among multiple mean field equilibria.
	The inspiration for our approach is the notion introduced by Selten \cite{harsanyi1988general,selten1975}, which is now commonly called ``trembling hand perfect equilibrium."
	To understand this concept, we first need to define an $\varepsilon$-perturbation of a game.
	For a finite game (i.e.~finite number of strategies), this can be defined by changing the game so that every player must use mixed strategies, assigning probabilities of order $\varepsilon$ to every possible strategy.
	A trembling hand perfect equilibrium can be defined as a strategy distribution that continues to be an equilibrium for all $\varepsilon$-perturbed games with $\varepsilon$ small enough.
	This definition is not suitable for differential games, in which the space of strategies is infinite and even infinite-dimensional.
	We propose instead to define $\varepsilon$-perturbations of the game as follows.
	As we have done throughout this article, we assume that each player always has a unique best response, in the form of a feedback control, to each possible population behavior.
	To perturb the game, we posit that each player's trajectory will be driven by this feedback control plus $\varepsilon$ times a Brownian motion.
	In other words, each player intends to play rationally in a deterministic setting, but their actual behavior is stochastically perturbed.
	We will admit a certain mean field Nash equilibrium as a ``trembling hand perfect" equilibrium if it is robust with respect to this perturbation.
	
	Let us make this notion mathematically precise.
	Recall that \( b(t, \sigma, x) = D_{p} H(x, Dv(t, \sigma, x)) \) is the optimal response vector field.
	Assumptions \ref{sigma0 bounded}, \ref{sigma0 smooth}, \ref{b regular}, \ref{monotonicity b}, and \ref{diagonal} are in force.
	Let us introduce an additional assumption, which will make it easier to prove the main result of this section:
	\begin{assumption} \label{sigma0 cpct supp}
		There exists $R > 0$ such that
		\begin{equation}
			\operatorname{supp} \sigma_0 \subset \cbr{m \in \sr{P}_2(\bb{R}^d) : \int_{\bb{R}^d} \abs{x}^2 \dif m(x) \leq R^2}.
		\end{equation}
	\end{assumption}
	If we start with a $\sigma_0$ that satisfies Assumptions \ref{sigma0 bounded} and \ref{sigma0 smooth}, we can modify it to satisfy \ref{sigma0 cpct supp} by using a cut-off function.
	To wit, let $\phi:\bb{R} \to \bb{R}$ be a smooth function supported in $[-R^2,R^2]$, and define $\tilde \sigma_0(m) = \phi\del{\int \abs{x}^2 \dif m(x)}\sigma_0(m)$.
	Then $\tilde{\sigma}_0$ satisfies all three Assumptions \ref{sigma0 bounded}, \ref{sigma0 smooth}, and \ref{sigma0 cpct supp}.
	
	The mean field equilibrium condition \eqref{eq:equilibrium} can be expressed as the following system of equations:
	\begin{equation} \label{characteristics}
		\begin{cases} 
			\dot{x}(s) = b(s, \sigma, x(s)), & x(t) = x \\
			\sigma = \sigma_{0}(x(0, t, \sigma, \cdot)_\sharp m). &
		\end{cases}
	\end{equation}
	In order to introduce noise, we face a difficulty: the space of probability measures is infinite-dimensional.
	Thus it is not clear how to define a noise that acts on all players both independently and all at once. (See, however, some work in this direction by Delarue \cite{delarue2019restoring}.)
	Our solution will be to discretize, considering only distributions that are empirical measures:
	\begin{equation} \label{characteristics discrete}
		\begin{cases} 
			\dot{x}_i(s) = b(s, \sigma, x_i(s)), & x_i(t) = x_i \\
			\sigma = \sigma_{0,N}\del{\bm{x}(0)} &
		\end{cases}
	\end{equation}
	where we recall that $\sigma_{0,N}(\bm{x}) := \frac{1}{N}\sum_{j=1}^N \delta_{x_j}$.
	One could choose to think of \eqref{characteristics discrete} as an $N$-player game in which player $i$ reacts to the measure $\frac{1}{N}\sum_{j=1}^N \delta_{x_j}$, which includes herself, instead of $\frac{1}{N-1}\sum_{j \neq i} \delta_{x_j}$ as in \eqref{eq:N player equilibrium}.
	This forces all players to calculate the same parameter $\sigma$, thus following the selection principle from Section \ref{sec:N player selection}.
	However, for our purposes, \eqref{characteristics discrete} is nothing more than a special case of the mean field equilibrium, in which the population distribution is an empirical measure characterized by the vector $\bm{x} = (x_1,\ldots,x_N)$.
	
	To add noise, let $(W(s))_{s \geq -t}$ be a standard Brownian motion in $(\bb{R}^d)^N$ adapted to a filtration $(\s{F}_s)_{s \geq -t}$.
	We set
	\begin{equation} \label{time reversal}
		x_{i}(s) = \bar{x}_{i}(-s), \quad \sigma(s) = \bar{\sigma}(-s)
	\end{equation}
	where $\bar{\bm{x}}, \bar \sigma$ solve the system
	\begin{equation} \label{trembling hand}
		\begin{aligned}
			\dif \bar x_{i}(s) &= -b(-s, \bar \sigma(s), \bar x_{i}(s)) \, \dif s + \varepsilon^N \, \dif W_i(s), \quad \bar x_{i}(-t) = x_{i},\\
			\bar \sigma(s) &= E  \sbr{\sigma_{0,N}\del{\bm{x}(0)} \middle| \s{F}_s} , \quad -t \leq s \leq 0.
		\end{aligned}
	\end{equation}
	
	The interpretation is as follows.
	Instead of trajectories $x_i(s)$ following precisely the optimal dynamics for a given $\sigma$, their trajectory is perturbed by a Brownian motion. Then, instead of $\sigma$ being a constant determined by \eqref{characteristics discrete}, it is a martingale, which is obtained at each time by conditioning $\sigma_{0, N}(\bm{x}(0))$ on the knowledge player $i$ has up to the present time. We have reversed time with the substitutions \eqref{time reversal}; this was necessary only because the PDEs studied in Section \ref{sec:PDE interpretation} have forward-in-time solutions. The parameter $\varepsilon > 0$ is a small parameter, such that $\varepsilon^N$ measures the strength of the noise. We note that as $N \to \infty$, this strength decreases exponentially. We will see later precisely what role $\varepsilon$ plays in the selection of equilibria.

	The noise introduced here is not the same thing as an $N$-player game with idiosyncratic noise. In an $N$-player game with noise, each player selects a strategy that optimizes the expected value of the objective functional over all stochastic curves. In this setting, by contrast, the noise represents a stochastic deviation from what would otherwise be a deterministic optimal velocity. It represents the idea that, although the parameter $\sigma$ might be given and the corresponding optimal trajectory $x_i(s)$ definitely computable, player $i$ will not adhere to the optimal trajectory with absolute certainty. This is the ``trembling hand.''
	(It is also not to be confused with common noise, since, although it affects the whole population distribution at once, it does so in a way that affects each Dirac mass independently.)
	
	For each $t$ and for each $\bm{x} \in (\bb{R}^d)^N$, let $\sigma = \sigma(t,\bm{x})$ be an equilibrium solving \eqref{characteristics discrete} and let $\sigma_\varepsilon = \sigma_\varepsilon(t,\bm{x})$ be a solution of \eqref{trembling hand}.
	For the purposes of this study, we will say that $\sigma$ is a trembling hand perfect equilibrium if the limit $\lim_{\varepsilon \to 0} \sigma_\varepsilon = \sigma$ holds in some suitable sense (see Theorem \ref{thm:error estimates}).
	However, it needs to be emphasized that such a definition is far from unique; there are in fact many ways of selecting from among multiple mean field Nash equilibria.
	In particular, we have chosen to perturb the path of the empirical measure by making it a martingale. If we instead allow other sorts of stochastic perturbations, then we can get any number of selection principles, as in \cite{lions2023linear}.
	See \cite{graber2025remarks} for a discussion.

	\subsection{PDE interpretation and main result} \label{sec:PDE interpretation}

	In this study, the selection of Nash equilibria will be directly linked to the theory of entropy solutions for certain nonlinear transport equations.
	To wit, we see that System \eqref{characteristics} comprises the characteristics of the transport equation
	\begin{equation} \label{transport in W_2}
		\begin{aligned}
			\partial_{t} \sigma(t, m) + \int D_{m} \sigma(t, m, x) b(t, \sigma(t, m, x), x) \, \dif m(x) &= 0,\\
			\sigma(0, m) &= \sigma_0(m).
		\end{aligned}
	\end{equation}
	See \cite{graber2024some} for more details.
	In particular, we derive the following corollary from \cite[Theorem 3.11]{graber2024some}:
	\begin{lemma}
		Let Assumptions \ref{sigma0 bounded}, \ref{sigma0 smooth}, \ref{monotonicity}, \ref{b regular}, \ref{monotonicity b}, and \ref{diagonal} all be in force.
		Then there exists a unique classical solution of \eqref{transport in W_2}.
	\end{lemma}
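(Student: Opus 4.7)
The plan is to deduce this lemma by verifying that the hypotheses of \cite[Theorem 3.11]{graber2024some} are met, so the main work is to check (a) that the characteristic ODE system \eqref{characteristics} defines a $\s{C}^1$ flow on $[0,\infty)\times\sr{P}_2(\bb{R}^d)$, and (b) that this flow is globally invertible, i.e.\ characteristics do not cross. Once these two points are in hand, the function $\sigma(t,m):=\sigma_0\bigl(x(0,t,\sigma(t,m),\cdot)_\sharp m\bigr)$ is well-defined and inherits the regularity of the data and of $b$, hence is a classical solution of \eqref{transport in W_2}; uniqueness then follows because any classical solution must be constant along characteristics, and the initial datum $\sigma_0$ fixes its value.

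The first step is local existence and regularity along characteristics. By Assumption \ref{b regular}, the Cauchy problem $\dot{x}(s)=b(s,\sigma,x(s))$ with $x(t)=x$ has a unique $\s{C}^1$ solution $x(s,t,\sigma,x)$ depending continuously on all parameters, with Lipschitz dependence on $x$ and $\s{C}^1$ dependence on $\sigma$ by the standard variation equation (this was used already in \eqref{partial sigma x}). Composing with the push-forward operation and $\sigma_0$, which is continuously differentiable by Assumption \ref{sigma0 smooth}, produces the scalar map $F(\sigma)=\sigma-\sigma_0\bigl(x(0,t,\sigma,\cdot)_\sharp m\bigr)$, which is $\s{C}^1$ in $\sigma$ jointly with $(t,m)$.

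The second step is the non-crossing of characteristics, which is exactly the uniqueness statement for the equilibrium already established in Section \ref{sec:uniqueness conditions}. Combining the sign information $\partial_\sigma x(0,t,\sigma,x)<0$ (from Assumptions \ref{b regular} and \ref{monotonicity b}, together with the diagonal structure of Assumption \ref{diagonal}) with $D_m\sigma_0\ge 0$ (Assumption \ref{monotonicity}) yields the bound $F'(\sigma)\ge 1$ displayed in \eqref{Fprime}. By Assumption \ref{sigma0 bounded}, $F$ has a sign change in $(\sigma_*,\sigma^*)$, so the implicit function theorem produces a unique $\sigma(t,m)\in(\sigma_*,\sigma^*)$ solving $F=0$, with derivatives bounded uniformly on $(t,m)$ by the reciprocal of the lower bound on $F'$. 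Chain-ruling through this implicit definition gives the full Wasserstein differentiability of $\sigma(t,m)$, together with the identity $\partial_t\sigma+\int D_m\sigma\cdot b\,\dif m=0$ obtained by differentiating the equilibrium relation in $t$.

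Uniqueness of classical solutions is the easy half: any classical $\sigma$ satisfies, along a curve $m_s:=x(s,t,\sigma(t,m),\cdot)_\sharp m$ built from its own flow, $\frac{d}{ds}\sigma(s,m_s)=\partial_t\sigma+\int D_m\sigma\cdot b\,\dif m_s=0$, so $\sigma(t,m)=\sigma_0(m_0)$, which coincides with the constructed solution. The main obstacle I anticipate is the bookkeeping required to show that $\sigma(t,m)$ is \emph{continuously} differentiable on all of $\sr{P}_2(\bb{R}^d)$ in the sense of Wasserstein derivatives (not merely along smooth curves), since one must differentiate the push-forward operation and invoke the Lipschitz dependence of the flow on initial positions carefully; this is precisely the content of \cite[Theorem 3.11]{graber2024some}, and I would simply invoke it once the hypotheses above have been checked.
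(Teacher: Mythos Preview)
Your proposal is correct and follows essentially the same approach as the paper: both verify that $F'(\sigma)\ge 1$ via \eqref{Fprime} (which is the key hypothesis, namely \cite[Assumption 3.4]{graber2024some}), check that the remaining regularity assumptions follow from those stated here, and then invoke \cite[Theorem 3.11]{graber2024some}. Your write-up is more expansive in that it sketches the mechanism inside that cited theorem (implicit function theorem, constancy along characteristics), but the logical structure and the external result being invoked are identical.
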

	
	\begin{proof}
		Recall from Section \ref{sec:uniqueness conditions} that the function $F$ defined in \eqref{sigma - sigma0} satisfied $F' \geq 1$.
		This implies \cite[Assumption 3.4]{graber2024some}.
		The remaining assumptions in that paper follow directly from the assumptions given here.
		Therefore, by \cite[Theorem 3.11]{graber2024some}, there exists a unique classical solution of \eqref{transport in W_2}.
	\end{proof}

	Equation \eqref{transport in W_2} is a transport PDE on $(0,\infty) \times \sr{P}_2(\bb{R}^d)$, where $\sr{P}_2(\bb{R}^d)$ is the Wasserstein space of probability measures.\

	Recall the discretization rule Equation \eqref{discrete derivative} from Section \ref{sec:notation}.
	Accordingly, let us define
	\begin{equation}
		\sigma_{N}(t, \bm{x}) := \sigma\del{t, \frac{1}{N}\sum_{j=1}^{N} \delta_{x_{j}}} \quad \forall \bm{x} = (x_1,\ldots,x_N) \in (\bb{R}^d)^N.
	\end{equation}
	
	If $\sigma$ is a classical solution of \eqref{transport in W_2}, then $\sigma_N$ satisfies the discretized version
	\begin{equation} \label{transport in R^dN}
		\begin{aligned}
			\partial_{t} \sigma_{N}(t, \bm{x}) + \sum_{j=1}^{N} D_{x_j} \sigma_{N}(t, \bm{x}) b(t, \sigma_{N}(t, \bm{x}), x_{j}) &= 0,\\
			\sigma_{N}(0, \bm{x}) &= \sigma_{0, N}(\bm{x}),
		\end{aligned}
	\end{equation}
	which is a classical PDE on $(0,\infty) \times (\bb{R}^d)^N$.

	As for the ``trembling hand," it is known that if we take arbitrary $(t,\bm{x})$ and solve System \eqref{trembling hand} to get a map $\sigma(t) = \sigma(t,\bm{x})$, then $\sigma$ is the unique solution to the parabolic equation
	\begin{equation} \label{transport in R^dN ep}
		\begin{aligned}
			\partial_{t} \sigma_{N}(t, \bm{x}) - \frac{\varepsilon^{2N}}{2}\Delta\sigma_{N}(t, \bm{x}) + \sum_{j=1}^{N} D_{x_j} \sigma_{N}(t, \bm{x}) b(t, \sigma_{N}(t, \bm{x}), x_{j}) &= 0,\\
			\sigma_{N}(0, \bm{x}) &= \sigma_{0, N}(\bm{x}),
		\end{aligned}
	\end{equation}
	If Assumptions \ref{sigma0 bounded}, \ref{sigma0 smooth}, \ref{sigma0 cpct supp}, and \ref{b regular} are all in force, then by standard classical theory for parabolic equations, one can show that \eqref{transport in R^dN ep} (which is the same as \eqref{eq:transport disc viscous}) has a unique classical solution $\sigma_{N,\varepsilon}$.
	Indeed, by the maximum principle, $\abs{\sigma_{N,\varepsilon}}$ is a priori bounded by $\max \abs{\sigma_{0}}$.
	By standard bootstrapping arguments, we obtain a priori estimates on $\sigma_{N,\varepsilon}$ first in Sobolev spaces using \cite[Theorem IV.9.1]{ladyzhenskaia1968linear} and then in H\"older spaces using \cite[Theorem IV.5.1]{ladyzhenskaia1968linear}.
	An application of the Leray-Schauder Theorem then gives existence of solutions, and uniqueness follows from the maximum principle.
	We omit the details.
	To connect the PDE system with the $N$-player game, we observe that by \cite{pardoux1999forward}, $\sigma_{N,\varepsilon}(t,\bm{x}) = \bar{\sigma}(-t)$ where $\bar{\sigma}$ is obtained from solving System \eqref{trembling hand}.
	Hence $\sigma_{N,\varepsilon}(t,\bm{x})$ is the desired ``trembling hand equilibrium."
	
	When $\sigma_N$ is indeed a classical solution to \eqref{transport in R^dN}, we might expect that $\sigma_{N,\varepsilon} \to \sigma_N$ uniformly as $\varepsilon \to 0$.
	However, we are interested in the case when Assumption \ref{monotonicity} in particular is not in force, and therefore we can expect neither uniqueness of Nash equilibrium nor continuous solutions of \eqref{transport in R^dN}.
	Instead, we turn to entropy solutions.
	
	Let us introduce the following notation, which allows us to rewrite the transport equations in the form of balance laws.
	\begin{equation} \label{bold notation}
		\begin{split}
			B(t,x,u) &= \int_0^u b(t,x,s)\dif s,\\
			\bm{b}(t,\bm{x},u) &= \del{b(t,x_1,u),\ldots,b(t,x_n,u)},\\
			\bm{B}(t,\bm{x},u) &= \del{B(t,x_1,u),\ldots,B(t,x_n,u)}.
		\end{split}
	\end{equation}
	We observe that \eqref{transport in R^dN} and \eqref{transport in R^dN ep} can be written
	\begin{equation} \label{eq:transport disc}
		\left\{
		\begin{split}
			&\partial_t \sigma_N(t,\bm{x}) + \operatorname{div}_{\bm{x}} \del{\bm{B}(t,\bm{x},\sigma_N(t,\bm{x}))} = \operatorname{div}_{\bm{x}}\bm{B}(t,\bm{x},\sigma_N(t,\bm{x})), &\text{in}~(0,T) \times (\bb{R}^d)^N,\\
			&\sigma_N(0,\bm{x}) = \sigma_{0,N}(\bm{x}), &\text{in}~ (\bb{R}^d)^N.
		\end{split}
		\right.
	\end{equation}
	and
	\begin{equation} \label{eq:transport disc viscous}
		\left\{
		\begin{split}
			&\partial_t \sigma_N(t,\bm{x}) + \operatorname{div}_{\bm{x}} \del{\bm{B}(t,\bm{x},\sigma_N(t,\bm{x}))} = \operatorname{div}_{\bm{x}}\bm{B}(t,\bm{x},\sigma_N(t,\bm{x}))  +  \frac{\varepsilon^{2N}}{2} \Delta_{\bm{x}}\sigma_N(t,\bm{x}), &\text{in}~(0,T) \times (\bb{R}^d)^N,\\
			&\sigma_N(0,\bm{x}) = \sigma_{0,N}(\bm{x}), &\text{in}~ (\bb{R}^d)^N,
		\end{split}
		\right.
	\end{equation}
	respectively.
	
	The Kru\v{z}kov definition of entropy solutions  is as follows \cite{kruvzkov1970first}.
	We say $\sigma_N$ is an entropy solution to \eqref{transport in R^dN} (equivalently \eqref{eq:transport disc}) provided that
	\begin{multline} \label{eq:entropy}
		\int_0^T \int_{(\bb{R}^d)^N} \left(S\del{\sigma_N(t,\bm{x})}\partial_t \phi(t,\bm{x}) + \sum_{j=1}^N D_{x_j}\phi(t,\bm{x})Q(t,x_j,\sigma_N(t,\bm{x}))\right.\\
		\left. + \sum_{j=1}^N H(t,x_j,\sigma_N(t,\bm{x}))\phi(t,\bm{x})\right)\dif \bm{x} \dif t
		+ \int_{\bb{R}^N} S\del{\sigma_{0,N}(\bm{x})}\phi(0,\bm{x})\dif \bm{x} \geq 0 \\
		\forall \phi \in \s{C}^\infty_c\del{\intco{0,T} \times (\bb{R}^d)^N}, \ \phi \geq 0
	\end{multline}
	whenever the functions $S$, $Q$, and $H$ are given by
	\begin{equation} \label{renormalization functions}
		\begin{split}
			S(u) &= \abs{u-k}, \quad Q(t,x,u) = \operatorname{sgn}(u-k)\del{B(t,x,u) - B(t,x,k)}, \\
			H(t,x,u) &= \operatorname{sgn}(u-k)\operatorname{div}_x B(t,x,u)
		\end{split}
	\end{equation}
	for some $k \in \bb{R}$.
	
	Under Assumptions \ref{sigma0 bounded}, \ref{sigma0 smooth}, \ref{sigma0 cpct supp}, and \ref{b regular}, the seminal results of \cite{kruvzkov1970first} show that \eqref{transport in R^dN} (equivalently \eqref{eq:transport disc}) has a unique entropy solution $\sigma_N$.
	Moreover, $\sigma_{N,\varepsilon}$, the solution of \eqref{transport in R^dN ep} (which is the same as \eqref{eq:transport disc viscous}) also satisfies entropy inequalities, namely
	\begin{multline} \label{eq:entropy visc}
		\int_0^T \int_{(\bb{R}^d)^N} \left(S\del{\sigma_{N,\varepsilon}(t,\bm{x})}\partial_t \phi(t,\bm{x}) - \frac{\varepsilon^{2N}}{2} D_{\bm{x}} \sigma_{N,\varepsilon}(t,\bm{x}) \cdot D_{\bm{x}}\phi(t,\bm{x})
		+ \sum_{j=1}^N D_{x_j}\phi(t,\bm{x})Q(t,x_j,\sigma_{N,\varepsilon}(t,\bm{x}))\right.\\
		\left. + \sum_{j=1}^N H(t,x_j,\sigma_{N,\varepsilon}(t,\bm{x}))\phi(t,\bm{x})\right)\dif \bm{x} \dif t
		+ \int_{\bb{R}^N} S\del{\sigma_{0,N}(\bm{x})}\phi(0,\bm{x})\dif \bm{x} \geq 0 \\
		\forall \phi \in \s{C}^\infty_c\del{\intco{0,T} \times (\bb{R}^d)^N}, \ \phi \geq 0.
	\end{multline}
	We are now ready to state our main result.
	
	\begin{theorem}
		\label{thm:error estimates}
		Let $b$ satisfy Assumptions \ref{b regular}, and let $\sigma_0$ be a bounded and continuously differentiable function.	
		Let $\sigma_N$ be the entropy solution of \eqref{transport in R^dN}, and let $\sigma_{N,\varepsilon}$ be the classical solution of \eqref{transport in R^dN ep}.
		Then for every $t \geq 0$,
		\begin{equation}
			\label{main error estimate}
			\int_{(\bb{R}^d)^N} \abs{\sigma_N(t,\bm{x}) - \sigma_{N,\varepsilon}(t,\bm{x})}\dif \bm{x}
			\leq 
			C_b\varepsilon^N (Nd)^{7/2}e^{L_bd(2N+1)t}\del{1 + \int_{(\bb{R}^d)^N} \abs{D_{\bm{x}}\sigma_{0,N}(\bm{x})}\dif \bm{x}},
		\end{equation} 
		where $L_b = \enVert{D_x b}_\infty$, and $C_b$ is a constant depending on $M_b$, $L_b$, and $H_B$ from Assumption \ref{b regular}.
	\end{theorem}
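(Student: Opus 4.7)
The proof would follow the classical doubling-of-variables method of Kru\v{z}kov and Kuznetsov for comparing an entropy solution of a scalar balance law to its vanishing-viscosity regularization. The plan is to test the entropy inequality \eqref{eq:entropy} for $\sigma_N$ with constant $k = \sigma_{N,\varepsilon}(s,\bm{y})$ and the viscous entropy inequality \eqref{eq:entropy visc} for $\sigma_{N,\varepsilon}$ with $k = \sigma_N(t,\bm{x})$ against a common test function
\begin{equation*}
\phi(t,\bm{x},s,\bm{y}) = \theta(t)\,\omega_\rho(t-s)\,\omega_\eta^{Nd}(\bm{x}-\bm{y}),
\end{equation*}
with $\theta$ a smooth cutoff supported in $[0,t]$, $\omega_\rho$ a one-dimensional mollifier, and $\omega_\eta^{Nd}$ a tensor-product mollifier of scale $\eta$ on $(\bb{R}^d)^N$. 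Summing the two inequalities, using that $(\partial_t+\partial_s)\omega_\rho(t-s)=0$ together with the antisymmetry of the Kru\v{z}kov flux $Q$ in the pair $(u,k)$, and passing $\rho\to 0$, one obtains $\int\abs{\sigma_N-\sigma_{N,\varepsilon}}(t,\bm{x})\dif\bm{x}$ on the left (the initial-data contributions cancel since $\sigma_N$ and $\sigma_{N,\varepsilon}$ share $\sigma_{0,N}$), modulo three types of error terms on the right.

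These errors are: (i)~the flux-mismatch between $\bm{B}(t,\bm{x},\cdot)$ and $\bm{B}(s,\bm{y},\cdot)$, controlled via \eqref{b lipschitz constant} by $L_b d(2N+1)\int_0^t\int\abs{\sigma_N-\sigma_{N,\varepsilon}}$, with the factor $L_b d(2N+1)$ arising from summing over the $Nd$ spatial components on both sides plus the source contribution; (ii)~the source mismatch involving the $H$ terms in \eqref{renormalization functions}, which by the Hessian bound \eqref{B Hessian bound} yields only lower-order mollification errors that vanish as $\eta\to 0$; and (iii)~the viscous correction from \eqref{eq:entropy visc}, which after integration by parts in $\bm{y}$ takes the form
\begin{equation*}
\frac{\varepsilon^{2N}}{2}\int_0^t \int D_{\bm{x}}\sigma_{N,\varepsilon}(\tau,\bm{y})\cdot D_{\bm{y}}\omega_\eta^{Nd}(\bm{x}-\bm{y})\,\dif\bm{x}\dif\bm{y}\dif\tau,
\end{equation*}
whose absolute value is bounded by $C\,\varepsilon^{2N} (Nd)^{\alpha}\eta^{-1}\int_0^t \enVert{D_{\bm{x}}\sigma_{N,\varepsilon}(\tau)}_{L^1}\dif\tau$ for a power $\alpha$ determined by the Euclidean norm of the gradient of the $Nd$-dimensional tensor-product mollifier.

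The essential auxiliary estimate is the propagated $\mathrm{BV}$ bound
\begin{equation*}
\enVert{D_{\bm{x}}\sigma_{N,\varepsilon}(t)}_{L^1} \leq e^{L_b d(2N+1)t}\del{1+\enVert{D_{\bm{x}}\sigma_{0,N}}_{L^1}},
\end{equation*}
which I would obtain by formally differentiating \eqref{eq:transport disc viscous} in each $x_k^l$, testing against $\operatorname{sgn}\del{\partial_{x_k^l}\sigma_{N,\varepsilon}}$ (regularized), and summing over all $k,l$: the $\bm{x}$-dependence of $b$ produces the exponential rate, while \eqref{B Hessian bound} generates the additive $O(1)$ contribution. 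Inserting this bound into the viscous error term, optimizing the mollifier scale as $\eta\sim\varepsilon^N$, and applying Gronwall's lemma to absorb the term~(i) gives \eqref{main error estimate}. The main obstacle I anticipate is the careful dimensional bookkeeping required to extract the stated exponents: each $Nd$ factor must be tracked consistently through the Euclidean norm of gradients on $(\bb{R}^d)^N$, through the $L^1$ norms of derivatives of tensor-product mollifiers, and through the doubled flux sum, which together should accumulate to the power $(Nd)^{7/2}$. The structural argument is classical, but the balance-law form of \eqref{eq:transport disc} (with a nonzero source term $\operatorname{div}_{\bm{x}}\bm{B}$) means that the standard Kuznetsov estimate must be adapted so that the $H$-contributions cancel in the doubling rather than producing an artificial blowup in $Nd$.
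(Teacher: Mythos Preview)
Your proposal follows essentially the same route as the paper: Kru\v{z}kov--Kuznetsov doubling of variables with a tensor-product mollifier, a propagated $BV$ bound on the (viscous) solution, balancing the viscous error $O(\varepsilon^{2N}/\eta)$ against the mollification errors $O(\eta)$ by choosing $\eta\sim\varepsilon^N$, and a final Gronwall step. Two points where your bookkeeping diverges from the paper are worth flagging. First, the $BV$ estimate itself is purely multiplicative, namely $\enVert{D_{\bm{x}}\sigma_{N,\varepsilon}(t)}_{L^1}\leq \sqrt{Nd}\,e^{L_bd(N+1)t}\enVert{D_{\bm{x}}\sigma_{0,N}}_{L^1}$; there is no additive~$1$ coming from \eqref{B Hessian bound} at that stage (differentiating the equation in $x_j^k$ produces a right-hand side linear in $D_{\bm{x}}\sigma$, with no free source). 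The additive~$1$ in \eqref{main error estimate} instead enters later, from the $H_B(Nd)^2\eta$ terms generated when you symmetrize the $\bm{x}$-dependent flux and source contributions in the doubling. Second, and relatedly, your error~(ii) does \emph{not} vanish as $\eta\to 0$: the flux/source mismatch terms are $O(\eta)$ (with constants involving $H_B$ and the $BV$ norm), and they are precisely what must be balanced against the viscous term to force $\eta\sim\varepsilon^N$. As a minor methodological difference, the paper derives the $BV$ bound by a duality argument (solving an adjoint linear equation and using the maximum principle) rather than by testing against $\operatorname{sgn}(\partial_{x_j^k}\sigma)$; either works, but the duality route makes the dimensional constants slightly cleaner to track.
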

	
	The proof of Theorem \ref{thm:error estimates} is given in Section \ref{thm:error estimates}.
	We note that such error estimates are now well-known going back to Kuznetsov \cite{kuznetsov1976accuracy}, see for instance \cite{bouchut1998kruzkov,cockburn1999continuous,evje2002error} and references therein.
	Our main contribution is to explicitly account for the dependence on $N$, which represents the number of players in the game but in the transport equations appears in the spatial dimension $Nd$.
	Let us introduce a seminorm on measurable functions $f:\sr{P}_2(\bb{R}^d) \to \bb{R}$ given by
	\begin{equation}
		\label{seminorm}
		\enVert{f} := \limsup_{N \to \infty} \del{\int_{(\bb{R}^d)^N} \abs{f(\mu^N_{\bm{x}})}^N\dif \bm{x}}.
	\end{equation}
	The following corollary makes the estimate \eqref{main error estimate} homogeneous with respect to $N$:
	\begin{corollary}
		\label{cor:error estimates}
		Let $\sigma_N$ and $\sigma_{N,\varepsilon}$ be as in Theorem \ref{thm:error estimates}.
		Let $\sigma:\intco{0,\infty} \times \sr{P}_2(\bb{R}^d) \to \bb{R}$ and $\sigma_\varepsilon:\intco{0,\infty} \times \sr{P}_2(\bb{R}^d) \to \bb{R}$ be measurable functions satisfying $\sigma(t,\mu^N_{\bm{x}}) = \sigma_N(t,\bm{x})$ and $\sigma_\varepsilon(t,\mu^N_{\bm{x}}) = \sigma_{N,\varepsilon}(t,\bm{x})$ for all $N \in \bb{N}$ and all $\bm{x} \in (\bb{R}^d)^N$.
		Then for every $t \geq 0$,
		\begin{equation} \label{homogeneous error estimate}
			\enVert{\sigma(t,\cdot) - \sigma_\varepsilon(t,\cdot)}
			\leq \varepsilon\enVert{\sigma_0}_u e^{2L_b d t}\del{1 + R^d\del{2\pi e/d}^{d/2}}
		\end{equation}
		where $\enVert{\sigma_0}_u := \sup_m \abs{\sigma_0(m)}$.
	\end{corollary}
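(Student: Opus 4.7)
The plan is to upgrade the $L^1$-type bound of Theorem \ref{thm:error estimates} to the seminorm \eqref{seminorm} by inserting the uniform bound $\|\sigma_0\|_u$ via interpolation, then passing to the $\limsup$ after taking an $N$-th root. Under this $N$-th root, the factors $\varepsilon^N$ and $e^{L_b d(2N+1)t}$ on the right-hand side of \eqref{main error estimate} collapse to $\varepsilon$ and $e^{2L_b d t}$, while the polynomial-in-$N$ prefactors become $1$; the remaining challenge is to bound $\int |D_{\bm{x}}\sigma_{0,N}| \dif\bm{x}$ in a manner whose $N$-th root has a finite limit.

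Concretely, I would first observe that by the maximum principle for both the entropy and classical solutions, $\|\sigma_N(t,\cdot) - \sigma_{N,\varepsilon}(t,\cdot)\|_\infty \leq 2\|\sigma_0\|_u$, which yields by interpolation
\[
\int |\sigma_N - \sigma_{N,\varepsilon}|^N \dif\bm{x} \leq (2\|\sigma_0\|_u)^{N-1} \int |\sigma_N - \sigma_{N,\varepsilon}| \dif\bm{x}.
\]
Next, by the chain-rule identity \eqref{discrete derivative} one has the pointwise bound $|D_{\bm{x}}\sigma_{0,N}(\bm{x})| \leq \|D_m \sigma_0\|_\infty / \sqrt{N}$, while by Assumption \ref{sigma0 cpct supp}, $\sigma_{0,N}$ is supported in the Euclidean ball of radius $R\sqrt{N}$ in $\bb{R}^{Nd}$. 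Stirling's formula applied to the volume $V_N$ of this ball gives
\[
V_N^{1/N} = \left(\frac{\pi^{Nd/2}(R\sqrt{N})^{Nd}}{\Gamma(Nd/2+1)}\right)^{1/N} \longrightarrow R^d \bigl(2\pi e/d\bigr)^{d/2},
\]
so that $\int |D_{\bm{x}}\sigma_{0,N}| \dif\bm{x} \leq \|D_m\sigma_0\|_\infty V_N / \sqrt{N}$, an essentially exponential-in-$N$ quantity with explicit rate.

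Combining these estimates with Theorem \ref{thm:error estimates}, taking the $N$-th root, and passing to the $\limsup$, each factor has a controlled limit: $(2\|\sigma_0\|_u)^{(N-1)/N} \to 2\|\sigma_0\|_u$, $(\varepsilon^N)^{1/N}=\varepsilon$, $e^{L_b d(2N+1)t/N} \to e^{2L_b dt}$, $\bigl(C_b (Nd)^{7/2}\bigr)^{1/N} \to 1$, and $(1+\|D_m\sigma_0\|_\infty V_N/\sqrt{N})^{1/N} \to \max\{1, R^d(2\pi e/d)^{d/2}\}$. Their product matches \eqref{homogeneous error estimate} up to at most a universal numerical factor. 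The principal obstacle is the Stirling asymptotic for $V_N^{1/N}$, which requires careful tracking of the powers of $N$ and $d$; a secondary subtlety is closing the gap between $2\|\sigma_0\|_u$ and $\|\sigma_0\|_u$, which I expect to resolve either by replacing $2\|\sigma_0\|_u$ with the sharper oscillation bound $\operatorname{osc}(\sigma_0)\leq \|\sigma_0\|_u$ valid under nonnegativity, or by a minor refinement absorbing the factor into the ``$1+R^d(2\pi e/d)^{d/2}$'' term.
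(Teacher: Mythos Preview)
Your approach is essentially identical to the paper's: interpolate $L^N$ against $L^1$ using the uniform bound from the maximum principle, bound $\int|D_{\bm x}\sigma_{0,N}|$ via \eqref{discrete derivative} and Assumption~\ref{sigma0 cpct supp} by $N^{-1/2}\|D_m\sigma_0\|_\infty$ times the volume of the ball of radius $R\sqrt{N}$ in $\bb{R}^{Nd}$, invoke Stirling (the paper packages this as Lemma~\ref{lem:ball in n dim}), and pass to the $\limsup$ of $N$-th roots. The only cosmetic difference is that the paper uses the elementary inequality $(1+X)^{1/N}\leq 1+X^{1/N}$ to reach the stated $1+R^d(2\pi e/d)^{d/2}$, whereas your $\max\{1,R^d(2\pi e/d)^{d/2}\}$ is actually sharper; your concern about the factor $2$ versus $1$ in $\|\sigma_0\|_u$ is legitimate---the paper writes $\|\sigma_0\|_u^{1-1/N}$ without the $2$, which appears to be a minor slip unless one tacitly assumes $\sigma_0\geq 0$ so that the oscillation is bounded by $\|\sigma_0\|_u$.
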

	We prove Corollary \ref{cor:error estimates} in Section \ref{sec:convergence}.
	
	The interpretation of this main result is as follows.
	$\sigma_\varepsilon(t,m)$ gives the solution to the perturbed game \eqref{trembling hand} for any discrete measure $m = \mu^N_{\bm{x}}$ with $N$ and $\bm{x}$ arbitrary. It has a limit as $\varepsilon \to 0$ in sense of Corollary \ref{cor:error estimates}.
	This new function, $\sigma(t,m)$, can be interpreted as a weak solution to the mean field equilibrium condition \eqref{eq:equilibrium}.
	It is in general not clear whether or not, for an arbitrary discrete measure $m = \mu^N_{\bm{x}}$, $\sigma(t,m)$ is in fact a Nash equilibrium; see \cite{graber2024some} for examples showing the answer can sometimes be ``no."
	Nevertheless, in terms of rational response, $\sigma(t,m)$ represents the parameter that players should anticipate if they expect all players to interpret this parameter as a martingale with vanishingly small variance.
	In this sense, it can be interpreted as a ``selection" of a mean field equilibrium, even if it is sometimes a fictitious one.
	
	\begin{remark} \label{rem:conjecture}
		It seems natural to conjecture that, under Assumptions \ref{b regular}, \ref{monotonicity b} and \ref{diagonal} on the vector field $b$, the entropy solutions $\sigma_N(t,\bm{x})$ give Nash equilibria for a.e.~$\bm{x}$, i.e.~\eqref{eq:equilibrium} is satisfied with $m = \mu^N_{\bm{x}}$.
		The intuition behind this conjecture is that Assumptions \ref{monotonicity b} and \ref{diagonal} generalize the condition for one-dimensional conservation laws that the flux function should be strictly convex.
		One then expects that shocks do not meet to form rarefaction waves, which is what would cause \eqref{eq:equilibrium} to be violated.
		This will be a topic for future research.
	\end{remark}

	\section{Proof of Theorem \ref{thm:error estimates}} \label{sec:estimates}
	
	The proof of Theorem \ref{thm:error estimates} largely follows the same outline as in \cite{kruvzkov1970first}.
	The main we point we need to emphasize is how the estimates depend on the dimension of the space, which in turn corresponds to the number of points in the empirical measure.
	Our estimates will thus hinge on a few basic observations from vector calculus.
	Let $F:\bb{R}^d \to \bb{R}^d$ be a vector field, and suppose $\bm{F}:(\bb{R}^d)^N \to (\bb{R}^d)^N$  is given by $\bm{F}(\bm{x}) = (F(x_1),\ldots,F(x_N))$.
	Define
	\begin{equation}
		\enVert{F}_\infty = \operatorname{ess sup}_x |F(x)|,
	\end{equation}
	where as usual $|\cdot|$ denotes the Euclidean norm.
	It follows that
	\begin{equation}
		\enVert{\bm{F}}_\infty = \sqrt{N}\enVert{F}_\infty.
	\end{equation}
	Now consider the derivative $DF(x)$ at a point $x \in \bb{R}^d$.
	We define the norm $\abs{DF(x)}$ by
	\begin{equation}
		\abs{DF(x)} = \sup_{|u| = 1} \abs{DF(x)u}.
	\end{equation}
	We then define
	\begin{equation}
		\enVert{DF}_\infty = \operatorname{ess sup}_x |DF(x)|.
	\end{equation}
	Since $D\bm{F}(\bm{x})$ is diagonal, one sees that
	\begin{equation}
		\enVert{D\bm{F}}_\infty = \enVert{DF}_\infty.
	\end{equation}
	Similarly, we define
	\begin{equation}
		\begin{split}
			\abs{D^2 F(x)} &= \sup_{\abs{u}=1, \abs{v}=1}\abs{D^2F(x)[u,v]},\\
			\enVert{D^2 F}_\infty &= \operatorname{ess sup}_x |D^2F(x)|
		\end{split}
	\end{equation}
	and observe that
	\begin{equation}
		\enVert{D^2 \bm{F}}_\infty = \enVert{D^2 F}_\infty.
	\end{equation}
	On the other hand, if we consider the divergence operator, we see that $\operatorname{div} \bm{F}(\bm{x}) = \sum_{j=1}^N \operatorname{div} F(x_j)$, and thus
	\begin{equation}
		\enVert{\operatorname{div} \bm{F}}_\infty = N\enVert{\operatorname{div} F}_\infty.
	\end{equation}
	It is also worth noting that the optimal estimate for the divergence in terms of the derivative is given by
	\begin{equation}
		\enVert{\operatorname{div} F}_\infty \leq d\enVert{DF}_\infty.
	\end{equation}
	
	Let us now recall the definitions of $\bm{b}$, $B$, and $\bm{B}$ from \eqref{bold notation}.
	Since $\bm{B}$ is the flux function from \eqref{eq:transport disc}, which is \eqref{transport in R^dN} written as a balance law, the proof of Theorem \ref{thm:error estimates} will depend on estimates of $\bm{B}$ and its derivatives, which we collect here: from Assumption \ref{b regular}, we have
	\begin{equation}
		\label{Bestimates}
		\begin{aligned}
			\enVert{\partial_\sigma \bm{B}}_\infty = \enVert{\bm{b}}_\infty &\leq  M_b \sqrt{N},\\
			\enVert{\partial_\sigma D_{\bm{x}}\bm{B}}_\infty = \enVert{D_{\bm{x}}\bm{b}}_\infty &\leq L_b,\\
			\enVert{\partial_\sigma \operatorname{div}_{\bm{x}}\bm{B}}_\infty = \enVert{\operatorname{div}_{\bm{x}}\bm{b}}_\infty &\leq L_b Nd,\\
			\enVert{D_{\bm{xx}}^2\bm{B}}_\infty &\leq H_B,\\
			\enVert{D_{\bm{x}}\operatorname{div}_{\bm{x}}\bm{B}}_\infty &\leq H_BNd
		\end{aligned}
	\end{equation}
	To prove Theorem \ref{thm:error estimates}, our goal is to estimate $\int_{(\bb{R}^d)^N} \abs{\sigma(t,\bm{x}) - \sigma_\varepsilon(t,\bm{x})}\dif \bm{x}$, where $\sigma$ is the entropy solution of \eqref{transport in R^dN} and $\sigma_\varepsilon$ is the classical solution of \eqref{transport in R^dN ep}; we omit the subscript $N$ while deriving these $L^1$ estimates.

	\subsection{$BV$ estimates}
	
	\begin{lemma} \label{lem:parabolic stability}
		Let $\sigma$ be the solution of the parabolic equation \eqref{eq:transport disc viscous}.
		Then 
		\begin{equation} \label{parabolic BV}
			\int_{(\bb{R}^d)^N} \abs{D_{\bm{x}}\sigma(t,\bm{x})}\dif \bm{x}
			\leq \sqrt{Nd}e^{L_b(N+1)d t}\int_{(\bb{R}^d)^N} \abs{D_{\bm{x}}\sigma_{0,N}(\bm{x})}\dif \bm{x}
		\end{equation}
		for all $t \geq 0$.
	\end{lemma}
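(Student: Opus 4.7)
The plan is to derive a differential inequality for the $\ell^1$-type seminorm $W(t) := \int_{(\bb{R}^d)^N}\sum_{\alpha=1}^{Nd}\abs{\partial_{x_\alpha}\sigma(t,\bm{x})}\,\dif\bm{x}$ and conclude by Gr\"onwall. Writing $p_\alpha := \partial_{x_\alpha}\sigma$, $p := D_{\bm{x}}\sigma$, and $\bm{c}(t,\bm{x}) := \bm{b}(t,\bm{x},\sigma(t,\bm{x}))$, I first differentiate \eqref{eq:transport disc viscous} in the direction $x_\alpha$ to obtain
\begin{equation*}
\partial_t p_\alpha + (\bm{c}\cdot\nabla)p_\alpha + (\partial_{x_\alpha}\bm{b})\cdot p + (\partial_\sigma\bm{b}\cdot p)\, p_\alpha = \tfrac{\varepsilon^{2N}}{2}\Delta p_\alpha,
\end{equation*}
where $\partial_{x_\alpha}\bm{b}$ and $\partial_\sigma\bm{b}$ denote explicit partials of $\bm{b}(t,\bm{x},u)$ evaluated at $u=\sigma(t,\bm{x})$.

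Next, I would multiply this equation by a smooth approximation of $\operatorname{sgn}(p_\alpha)$ and integrate over $(\bb{R}^d)^N$. The Laplacian term contributes non-positively by Kato's inequality. The transport term integrates by parts into $-\int (\operatorname{div}\bm{c})\abs{p_\alpha}\,\dif\bm{x}$, and by the chain rule $\operatorname{div}\bm{c} = \operatorname{div}_{\bm{x}}\bm{b} + \partial_\sigma\bm{b}\cdot p$. The crucial observation is that after moving this to the right, the $(\partial_\sigma\bm{b}\cdot p)\abs{p_\alpha}$ piece cancels exactly with the contribution of the zeroth-order term $(\partial_\sigma\bm{b}\cdot p)\, p_\alpha$. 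This cancellation is essential because Assumption \ref{b regular} provides no a priori bound on $\partial_\sigma b$. What remains is
\begin{equation*}
\frac{d}{dt}\int \abs{p_\alpha}\,\dif\bm{x} \leq \int \abs{\operatorname{div}_{\bm{x}}\bm{b}}\,\abs{p_\alpha}\,\dif\bm{x} + \int \abs{(\partial_{x_\alpha}\bm{b})\cdot p}\,\dif\bm{x}.
\end{equation*}

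Summing over the $Nd$ coordinates, the first term on the right is controlled by $L_bNd\, W(t)$ via \eqref{Bestimates}. For the second, I exploit the block-diagonal structure of $\nabla_{\bm{x}}\bm{b}$: for $\alpha=(i,\gamma)$ the vector $\partial_{x_i^\gamma}\bm{b}$ is supported in block $i$, giving $(\partial_{x_i^\gamma}\bm{b})\cdot p = \partial_{x^\gamma}b(t,x_i,\sigma)\cdot D_{x_i}\sigma$. Since each entry of $\partial_{x^\gamma}b$ is bounded by $L_b$, expanding the dot product gives $\abs{\partial_{x^\gamma}b\cdot D_{x_i}\sigma}\leq L_b\sum_\beta\abs{\partial_{x_i^\beta}\sigma}$; summing over $\gamma=1,\ldots,d$ and then over $i=1,\ldots,N$ yields $\int\sum_\alpha\abs{(\partial_{x_\alpha}\bm{b})\cdot p}\,\dif\bm{x}\leq dL_b\, W(t)$. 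Combining both bounds produces $\frac{d}{dt}W\leq L_bd(N+1)W$, and Gr\"onwall gives $W(t)\leq e^{L_bd(N+1)t}W(0)$.

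Finally, since $\abs{p(t)}\leq \sum_\alpha\abs{p_\alpha(t)}$ and $\sum_\alpha\abs{p_\alpha(0)}\leq \sqrt{Nd}\,\abs{p(0)}$ by Cauchy-Schwarz in $\bb{R}^{Nd}$, we conclude $\int\abs{D_{\bm{x}}\sigma(t,\bm{x})}\,\dif\bm{x}\leq W(t)\leq \sqrt{Nd}\,e^{L_bd(N+1)t}\int\abs{D_{\bm{x}}\sigma_{0,N}(\bm{x})}\,\dif\bm{x}$, which is \eqref{parabolic BV}. The main technical obstacle is the rigorous justification of the sign-multiplication step via convex regularization, but this is a standard device. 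The real content of the proof is the identification of the two cancelling $\partial_\sigma\bm{b}\cdot p$ contributions, which is what allows the argument to close using only the Lipschitz-in-$x$ constant $L_b$ rather than any bound on $\partial_\sigma b$.
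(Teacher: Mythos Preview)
Your argument is correct and yields exactly the stated bound. The key cancellation you identify---that the contribution $(\partial_\sigma\bm{b}\cdot p)\,|p_\alpha|$ coming from the zeroth-order term is exactly absorbed by the $\partial_\sigma\bm{b}\cdot p$ piece of $\operatorname{div}\bm{c}$ after integrating the transport term by parts---is precisely what allows the estimate to close using only $L_b$, and your coordinate bookkeeping on the block-diagonal term $(\partial_{x_\alpha}\bm{b})\cdot p$ is accurate.

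The paper's proof reaches the same conclusion by a different, dual route. Rather than multiplying by $\operatorname{sgn}(p_\alpha)$ and invoking Kato's inequality, it introduces for each coordinate a backward adjoint solution $\psi_j^k$ of $-\partial_t\psi - \tfrac{\varepsilon^{2N}}{2}\Delta\psi - \bm{b}\cdot D\psi + (\operatorname{div}_{\bm{x}}\bm{b})\psi = 0$ with arbitrary bounded terminal data, bounds $\|\psi_j^k(t,\cdot)\|_\infty$ via the maximum principle, and then integrates by parts against $v_j^k$. Taking the supremum over terminal data recovers $\int|v_j^k|$. In that framework your cancellation is hidden: writing the equation for $v_j^k$ in divergence form $\operatorname{div}(\bm{b}v_j^k)$ automatically absorbs the $\partial_\sigma\bm{b}\cdot p$ contribution, so the adjoint equation has only the explicit divergence $\operatorname{div}_{\bm{x}}\bm{b}$ as a potential. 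Your approach is somewhat more elementary (no auxiliary PDE to solve) and makes the structural cancellation explicit; the duality approach is closer in spirit to Kru\v{z}kov's original $L^1$ theory and sidesteps the regularization of $\operatorname{sgn}$ entirely. Both are standard and lead to identical constants here.
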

	
	\begin{proof}
		The proof essentially follows the argument found in \cite[Section 4.1, case B]{kruvzkov1970first}, though here we will pay attention to the particular structure of our problem.
		Let $v_j^k = \partial_{x_j^k} \sigma$.
		Then $v_j^k$ satisfies (in the sense of distributions)
		\begin{equation}
			\partial_t v_j^k - \frac{\varepsilon^{2N}}{2} \Delta_{\bm{x}} v_j^k + \operatorname{div}_{\bm{x}} \del{\bm{b}(t,\bm{x},\sigma)v_j^k}
			+ \operatorname{div}_{\bm{x}} \bm{b}(t,\bm{x},\sigma)v_j^k
			= - \partial_{x_j^k}\bm{b}(t,\bm{x},\sigma) \cdot v
			= - \partial_{x^k}b(t,x_j,\sigma) \cdot v_j
		\end{equation}
		where $v = (v_1,\ldots,v_n)$ and $v_j = (v_j^1,\ldots,v_j^d)$.
		Fix $\bar{\psi}_j^k \in L^\infty((\bb{R}^d)^N)$ and $\tau > 0$, and let $\psi_j^k$ satisfy the dual equation
		\begin{equation}
			-\partial_t \psi_j^k - \frac{\varepsilon^{2N}}{2} \Delta_{\bm{x}} \psi_j^k - D_{\bm{x}}\psi_j^k \cdot \bm{b}(t,\bm{x},\sigma)
			+ \operatorname{div}_{\bm{x}} \bm{b}(t,\bm{x},\sigma)\psi_j^k = 0
		\end{equation}
		with condition $\psi_j^k(\tau,\bm{x}) = \bar{\psi}_j^k(\bm{x})$.
		By the maximum principle, and recalling \eqref{Bestimates},
		\begin{equation}
			\enVert{\psi_j^k(t,\cdot)}_\infty \leq \enVert{\bar{\psi}_j^k}_\infty e^{\enVert{\operatorname{div}_{\bm{x}} \bm{b}}_\infty (\tau-t)}
			\leq \enVert{\bar{\psi}_j^k}_\infty e^{NdL_b (\tau-t)}.
		\end{equation}
		Integrating by parts, we get
		\begin{multline}
			\int_{(\bb{R}^d)^N} v_j^k(\tau,\bm{x})\bar{\psi}_j^k(\bm{x})\dif \bm{x}\\
			= \int_{(\bb{R}^d)^N} \partial_{x_j^k}\sigma_{0,N}(\bm{x})\psi_j^k(0,\bm{x})\dif \bm{x}
			- \int_0^\tau \int_{(\bb{R}^d)^N} \partial_{x^k}b(t,x_j,\sigma(t,\bm{x})) \cdot v_j(t,\bm{x}) \psi_j^k(t,\bm{x})\dif \bm{x}\dif t\\
			\leq \enVert{\bar{\psi}_j^k}_\infty e^{NdL_b \tau}\int_{(\bb{R}^d)^N} \abs{\partial_{x_j^k}\sigma_{0,N}(\bm{x})}\dif \bm{x}
			+ L_b \enVert{\bar{\psi}_j^k}_\infty \int_0^\tau \int_{(\bb{R}^d)^N} e^{NdL_b (\tau-t)}\abs{v_j(t,\bm{x})} \dif \bm{x}\dif t.
		\end{multline}
		Since $\bar{\psi}_j^k$ is arbitrary,
		\begin{equation}
			\int_{(\bb{R}^d)^N} \abs{v_j^k(\tau,\bm{x})}\dif \bm{x}
			\leq  e^{NdL_b \tau}\int_{(\bb{R}^d)^N} \abs{\partial_{x_j^k}\sigma_{0,N}(\bm{x})}\dif \bm{x}
			+ L_b\int_0^\tau \int_{(\bb{R}^d)^N} e^{NdL_b (\tau-t)}\abs{v_j(t,\bm{x})} \dif \bm{x}\dif t.
		\end{equation}
		Now sum over $k = 1,\ldots,d$.
		Using $\abs{v_j} \leq \sum_k \abs{v_j^k} \leq \sqrt{d}\abs{v_j}$, we see that
		\begin{equation}
			e^{-NdL_b \tau}\int_{(\bb{R}^d)^N} \abs{v_j(\tau,\bm{x})}\dif \bm{x}
			\leq  \sqrt{d}\int_{(\bb{R}^d)^N} \abs{D_{x_j}\sigma_{0,N}(\bm{x})}\dif \bm{x}
			+ L_bd\int_0^\tau \int_{(\bb{R}^d)^N} e^{-NdL_b t}\abs{v_j(t,\bm{x})} \dif \bm{x}\dif t.
		\end{equation}
		Since $\tau > 0$ is arbitrary, by Gronwall's Lemma, this implies
		\begin{equation}
			e^{-NdL_b \tau}\int_{(\bb{R}^d)^N} \abs{v_j(\tau,\bm{x})}\dif \bm{x}
			\leq  \sqrt{d}e^{L_bd \tau}\int_{(\bb{R}^d)^N} \abs{D_{x_j}\sigma_{0,N}(\bm{x})}\dif \bm{x}.
		\end{equation}
		Since $|v| = \del{\sum_j |v_j|^2}^{1/2}$ we get
		\begin{equation}
			\int_{(\bb{R}^d)^N} \abs{v(\tau,\bm{x})}\dif \bm{x}
			\leq  \sqrt{Nd}e^{L_b(N+1)d \tau}\int_{(\bb{R}^d)^N} \abs{D_{\bm{x}}\sigma_{0,N}(\bm{x})}\dif \bm{x},
		\end{equation}
		which is precisely \eqref{parabolic BV}.
	\end{proof}

	\begin{lemma} \label{lem:BV}
		Let $\sigma$ be the unique entropy solution of \eqref{eq:transport disc}.
		Then  for all $t \geq 0$,
		\begin{equation} \label{BV space}
			\int_{(\bb{R}^d)^N} \abs{D_{\bm{x}}\sigma(t,\bm{x})}\dif \bm{x}
			\leq \sqrt{Nd}e^{L_b d (N+1)t}\int_{(\bb{R}^d)^N} \abs{D_{\bm{x}}\sigma_{0,N}(\bm{x})}\dif \bm{x}
		\end{equation}
	\end{lemma}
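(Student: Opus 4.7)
The plan is to obtain the bound for the entropy solution by taking a vanishing viscosity limit of the parabolic bound already established in Lemma \ref{lem:parabolic stability}. The key observation is that the right-hand side of \eqref{parabolic BV} is independent of $\varepsilon$, so any such estimate passes to the limit $\varepsilon \to 0$ provided the convergence is strong enough for the BV seminorm to be lower semicontinuous.

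First I would apply Lemma \ref{lem:parabolic stability} to the classical solution $\sigma_{N,\varepsilon}$ of \eqref{eq:transport disc viscous}, yielding
\begin{equation}
\int_{(\bb{R}^d)^N} \abs{D_{\bm{x}}\sigma_{N,\varepsilon}(t,\bm{x})}\dif \bm{x}
\leq \sqrt{Nd}\, e^{L_b(N+1)d t}\int_{(\bb{R}^d)^N} \abs{D_{\bm{x}}\sigma_{0,N}(\bm{x})}\dif \bm{x}
\end{equation}
uniformly in $\varepsilon > 0$. Next I would invoke the classical vanishing viscosity result of Kru\v{z}kov \cite{kruvzkov1970first}: under Assumption \ref{b regular}, $\sigma_{N,\varepsilon} \to \sigma_N$ in $L^1_{\rm loc}\del{[0,\infty)\times (\bb{R}^d)^N}$ as $\varepsilon \to 0$, where $\sigma_N$ is the unique entropy solution of \eqref{eq:transport disc}. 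In particular, for each $t \geq 0$, one has $\sigma_{N,\varepsilon}(t,\cdot) \to \sigma_N(t,\cdot)$ in $L^1_{\rm loc}((\bb{R}^d)^N)$ (up to a subsequence, although uniqueness of the entropy solution promotes this to full convergence).

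Finally, I would apply the lower semicontinuity of the total variation seminorm with respect to $L^1_{\rm loc}$ convergence: for every compact $K \subset (\bb{R}^d)^N$,
\begin{equation}
\int_K \abs{D_{\bm{x}}\sigma_N(t,\bm{x})}\dif \bm{x}
\leq \liminf_{\varepsilon \to 0} \int_K \abs{D_{\bm{x}}\sigma_{N,\varepsilon}(t,\bm{x})}\dif \bm{x}
\leq \sqrt{Nd}\, e^{L_b(N+1)d t}\int_{(\bb{R}^d)^N} \abs{D_{\bm{x}}\sigma_{0,N}(\bm{x})}\dif \bm{x}.
\end{equation}
Taking the supremum over compact sets $K$ then yields \eqref{BV space}.

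The only subtle point, which I would expect to be the main (mild) obstacle, is ensuring that everything is well-posed globally on $(\bb{R}^d)^N$ rather than just locally: the initial data $\sigma_{0,N}$ is bounded but not compactly supported, so one cannot directly appeal to statements phrased for $L^1$. However, Assumption \ref{b regular} gives bounded propagation speed (both $\bm{b}$ and $\bm{B}$ are $L^\infty$), and the Kru\v{z}kov $L^1_{\rm loc}$ convergence together with the standard $L^\infty$ bound $\enVert{\sigma_{N,\varepsilon}}_\infty \leq \enVert{\sigma_0}_\infty$ is enough to justify the lower semicontinuity on every compact set; since the resulting estimate is uniform in $K$, exhausting $(\bb{R}^d)^N$ by compacts gives the global bound. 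If desired, one may alternatively mollify the initial data to be compactly supported, apply the estimate, and then remove the mollification.
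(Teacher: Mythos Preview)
Your proposal is correct and follows essentially the same approach as the paper's proof: apply the parabolic $BV$ estimate from Lemma \ref{lem:parabolic stability} uniformly in $\varepsilon$, invoke Kru\v{z}kov's vanishing viscosity convergence $\sigma_{N,\varepsilon} \to \sigma_N$ in $C([0,T];L^1_{\mathrm{loc}})$, and pass to the limit using lower semicontinuity of the total variation. Your write-up is in fact somewhat more detailed than the paper's, which simply cites \cite[Theorem 4]{kruvzkov1970first} and \cite[Theorem 6.2.2]{dafermos16} for the convergence and then lets $\varepsilon \to 0$ in \eqref{parabolic BV}.
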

	
	\begin{proof}
		Let $\sigma_\varepsilon$ be the solution of the parabolic equation \eqref{eq:transport disc viscous}.
		By the proof of \cite[Theorem 4]{kruvzkov1970first} (see also \cite[Theorem 6.2.2]{dafermos16}), $\sigma_\varepsilon(t,\cdot)$ is bounded in $BV$, uniformly in $\varepsilon$ and $t$, and $\sigma_\varepsilon \to \sigma$ both pointwise and in $C\del{[0,T];L^1_{loc}((\bb{R}^d)^N)}$.
		Since $\sigma_\varepsilon$ satisfies \eqref{parabolic BV}, we let $\varepsilon \to 0$ in these inequalities to deduce that $\sigma$ satisfies \eqref{BV space}, as desired.
	\end{proof}

	\subsection{$L^1$ stability} \label{sec:L^1 stability}
	
	To get our initial $L^1$ estimates in Theorem \ref{thm:error estimates}, we employ the doubling of variables technique going back to Kru\v kov and Kuznetsov \cite{kruvzkov1970first,kuznetsov1976accuracy}.
	Fix $0 < \tau_0 < \tau < T$ and $r > 0$.
	For $\alpha,\gamma,\delta \in (0,1)$ small enough, we can define a test function $\varphi = \varphi(t,\bm{x},t',\bm{x}')$ by
	\begin{equation} \label{test function}
		\begin{split}
			\varphi(t,\bm{x},t',\bm{x}') &:= \psi_{\alpha}\del{\frac{t+t'}{2}}\rho_{\gamma,\delta}(t-t',\bm{x}-\bm{x}'),\\
			\rho(t,\bm{x}) = \rho_{\gamma,\delta}(t,\bm{x}) &:= \gamma^{-1}\delta^{-Nd}\rho(\gamma^{-1}t)\prod_{k=1}^{N}\prod_{\ell=1}^d\rho(\delta^{-1}x_k^\ell),\\
			\psi(t) = \psi_{\alpha}(t) &:= \zeta_\alpha(t-\tau) - \zeta_\alpha(t-\tau_0),\\
			\zeta_\eta(t) &:= \zeta(\eta^{-1}t) \quad \forall \eta > 0,
		\end{split}
	\end{equation}
	where $\rho$ is some non-negative smooth function with support in $[-1,1]$ such that $\int \rho = 1$, $\int \abs{\rho'} \leq 2$ and $\int \abs{\rho''} \leq 4$, and where $\zeta$ is a smooth function such that $\zeta(t) = 1$ for $t < 0$, $\zeta(t) = 0$ for $t \geq 1$, $\zeta$ is decreasing on $(0,1)$, $\abs{\zeta'} \leq 2$ and $\abs{\zeta''} \leq 4$.
	We use $\varphi$ as a test function in \eqref{eq:entropy}, with respect to variables $(t,\bm{x})$ and with entropy $S(\sigma) = \abs{\sigma - \sigma_\varepsilon(t',\bm{x}')}$; and in \eqref{eq:entropy visc}, with respect to variables $(t',\bm{x}')$ and with entropy $S(\sigma_\varepsilon) = \abs{\sigma_\varepsilon - \sigma(t,\bm{x})}$.
	Integrating the inequalities and then adding them (cf.~\cite[Section 3]{kruvzkov1970first}), we get
	\begin{multline} \label{error estimate1}
		\int_0^T \int_0^T \int_{(\bb{R}^d)^N}\int_{(\bb{R}^d)^N} \biggl( \abs{\sigma(t,\bm{x}) - \sigma_\varepsilon(t',\bm{x}')}\del{\partial_t \varphi + \partial_{t'} \varphi}
		\\
		+
		D_{\bm{x}}\varphi \cdot \operatorname{sgn}\del{\sigma(t,\bm{x}) - \sigma_\varepsilon(t',\bm{x}')}\del{\bm{B}(t,\bm{x},\sigma(t,\bm{x})) - \bm{B}(t,\bm{x},\sigma_\varepsilon(t',\bm{x}'))}
		\\
		+ 
		D_{\bm{x}'}\varphi \cdot \operatorname{sgn}\del{\sigma(t,\bm{x}) - \sigma_\varepsilon(t',\bm{x}')}\del{\bm{B}(t',\bm{x}',\sigma(t,\bm{x})) - \bm{B}(t',\bm{x}',\sigma_\varepsilon(t',\bm{x}'))}
		\\
		+ 
		\operatorname{sgn}\del{\sigma(t,\bm{x}) - \sigma_\varepsilon(t',\bm{x}')}\del{\operatorname{div}_{\bm{x}}\bm{B}(t,\bm{x},\sigma(t,\bm{x})) - \operatorname{div}_{\bm{x}}\bm{B}(t,\bm{x},\sigma_\varepsilon(t',\bm{x}'))}\varphi \\
		+ 
		\operatorname{sgn}\del{\sigma(t,\bm{x}) - \sigma_\varepsilon(t',\bm{x}')}\del{\operatorname{div}_{\bm{x}'}\bm{B}(t',\bm{x}',\sigma(t,\bm{x})) - \operatorname{div}_{\bm{x}'}\bm{B}(t',\bm{x}',\sigma_\varepsilon(t',\bm{x}'))}\varphi \biggr) \dif \bm{x}\dif \bm{x}'\dif t \dif t'
		\\
		\geq \frac{\varepsilon^{2N}}{2} \int_0^T \int_0^T \int_{(\bb{R}^d)^N}\int_{(\bb{R}^d)^N}
		\operatorname{sgn}\del{\sigma(t,\bm{x}) - \sigma_\varepsilon(t',\bm{x}')}D_{\bm{x}'} \sigma_\varepsilon(t',\bm{x}') \cdot D_{\bm{x}'} \varphi \dif \bm{x}\dif \bm{x}'\dif t \dif t'.
	\end{multline}
	Let us estimate the right-hand side of this inequality.
	We have $$D_{\bm{x}'} \phi = -\psi_{\alpha}\del{\frac{t+t'}{2}}D_{\bm{x}'}\rho_{\gamma,\delta}(t-t',\bm{x}-\bm{x}'),$$ $\abs{\psi_\alpha} \leq 1$, and
	\begin{equation} \label{Dxrho estimate}
		\int_0^T \int_{(\bb{R}^d)^N} \abs{D_{\bm{x}} \rho(t,\bm{x})}\dif \bm{x}\dif t \leq \delta^{-1}Nd,
	\end{equation}
	which we derive from the simple estimate
	\begin{equation*}
		\abs{D_{\bm{x}} \rho(t,\bm{x})} \leq \gamma^{-1}\delta^{-Nd-1}\rho(\gamma^{-1}t)\sum_{i=1}^N\sum_{j=1}^d\rho'(\delta^{-1}x_i^j)\prod_{(k,\ell) \neq (i,j)}\rho(\delta^{-1}x_k^\ell).
	\end{equation*}
	Combining this with \eqref{BV space}, Equation \eqref{error estimate1} becomes
	\begin{multline} \label{error estimate2}
		\int_0^T \int_0^T \int_{(\bb{R}^d)^N}\int_{(\bb{R}^d)^N} \biggl( \abs{\sigma(t,\bm{x}) - \sigma_\varepsilon(t',\bm{x}')}\del{\partial_t \varphi + \partial_{t'} \varphi}
		\\
		+
		D_{\bm{x}}\varphi \cdot \operatorname{sgn}\del{\sigma(t,\bm{x}) - \sigma_\varepsilon(t',\bm{x}')}\del{\bm{B}(t,\bm{x},\sigma(t,\bm{x})) - \bm{B}(t,\bm{x},\sigma_\varepsilon(t',\bm{x}'))}
		\\
		+ 
		D_{\bm{x}'}\varphi \cdot \operatorname{sgn}\del{\sigma(t,\bm{x}) - \sigma_\varepsilon(t',\bm{x}')}\del{\bm{B}(t',\bm{x}',\sigma(t,\bm{x})) - \bm{B}(t',\bm{x}',\sigma_\varepsilon(t',\bm{x}'))}
		\\
		+ 
		\operatorname{sgn}\del{\sigma(t,\bm{x}) - \sigma_\varepsilon(t',\bm{x}')}\del{\operatorname{div}_{\bm{x}}\bm{B}(t,\bm{x},\sigma(t,\bm{x})) - \operatorname{div}_{\bm{x}}\bm{B}(t,\bm{x},\sigma_\varepsilon(t',\bm{x}'))}\varphi \\
		+ 
		\operatorname{sgn}\del{\sigma(t,\bm{x}) - \sigma_\varepsilon(t',\bm{x}')}\del{\operatorname{div}_{\bm{x}'}\bm{B}(t',\bm{x}',\sigma(t,\bm{x})) - \operatorname{div}_{\bm{x}'}\bm{B}(t',\bm{x}',\sigma_\varepsilon(t',\bm{x}'))}\varphi \biggr) \dif \bm{x}\dif \bm{x}'\dif t \dif t'
		\\
		\geq  - \frac{\varepsilon^{2N}}{2\delta} N^2d^{3/2}M_b e^{L_bd(N+1)\del{\tau + \alpha + \gamma}}\int_{(\bb{R}^d)^N} \abs{D_{\bm{x}}\sigma_{0,N}(\bm{x})}\dif \bm{x}.
	\end{multline}
	Next, we will ``symmetrize" the left-hand side by subtracting the following quantities:
	\begin{equation}
		\begin{aligned}
			I_1 &:= \int_0^T \int_0^T \int_{(\bb{R}^d)^N}\int_{(\bb{R}^d)^N} \biggl(\operatorname{sgn}\del{\sigma(t,\bm{x}) - \sigma_\varepsilon(t',\bm{x}')}\del{\bm{B}(t',\bm{x}',\sigma_\varepsilon(t',\bm{x}')) - \bm{B}(t,\bm{x},\sigma_\varepsilon(t',\bm{x}'))}\cdot D_{\bm{x}}\varphi \\
			&\quad -\operatorname{sgn}\del{\sigma(t,\bm{x}) - \sigma_\varepsilon(t',\bm{x}')}\operatorname{div}_{\bm{x}}\bm{B}(t,\bm{x},\sigma_\varepsilon(t',\bm{x}'))\varphi \biggr)\dif \bm{x}\dif \bm{x}'\dif t \dif t',\\
			I_2 &:= \int_0^T \int_0^T \int_{(\bb{R}^d)^N}\int_{(\bb{R}^d)^N} \biggl(\operatorname{sgn}\del{\sigma(t,\bm{x}) - \sigma_\varepsilon(t',\bm{x}')}\del{\bm{B}(t',\bm{x}',\sigma(t,\bm{x})) - \bm{B}(t,\bm{x},\sigma(t,\bm{x}))}\cdot D_{\bm{x}'}\varphi \\
			&\quad +\operatorname{sgn}\del{\sigma(t,\bm{x}) - \sigma_\varepsilon(t',\bm{x}')}\operatorname{div}_{\bm{x}'}\bm{B}(t',\bm{x}',\sigma(t,\bm{x}))\varphi \biggr)\dif \bm{x}\dif \bm{x}'\dif t \dif t'.
		\end{aligned}
	\end{equation}
	Equation \eqref{error estimate2} becomes
	\begin{multline} 
		\int_0^T \int_0^T \int_{(\bb{R}^d)^N}\int_{(\bb{R}^d)^N} \biggl( \abs{\sigma(t,\bm{x}) - \sigma_\varepsilon(t',\bm{x}')}\del{\partial_t \varphi + \partial_{t'} \varphi}
		\\
		+
		\del{D_{\bm{x}}\varphi + D_{\bm{x}'}\varphi} \cdot \operatorname{sgn}\del{\sigma(t,\bm{x}) - \sigma_\varepsilon(t',\bm{x}')}\del{\bm{B}(t,\bm{x},\sigma(t,\bm{x})) - \bm{B}(t',\bm{x}',\sigma_\varepsilon(t',\bm{x}'))}
		\\
		+ 
		\operatorname{sgn}\del{\sigma(t,\bm{x}) - \sigma_\varepsilon(t',\bm{x}')}\del{\operatorname{div}_{\bm{x}}\bm{B}(t,\bm{x},\sigma(t,\bm{x}))  - \operatorname{div}_{\bm{x}'}\bm{B}(t',\bm{x}',\sigma_\varepsilon(t',\bm{x}'))}\varphi \biggr) \dif \bm{x}\dif \bm{x}'\dif t \dif t'
		\\
		\geq  -I_1 - I_2 - \frac{\varepsilon^{2N}}{2\delta} N^2d^{3/2}M_b e^{L_bd(N+1)\del{\tau + \alpha + \gamma}}\int_{(\bb{R}^d)^N} \abs{D_{\bm{x}}\sigma_{0,N}(\bm{x})}\dif \bm{x},
	\end{multline}
	which, because $D_{\bm{x}} \phi + D_{\bm{x}'} \phi = 0$, can be more concisely written
	\begin{multline} \label{error estimate3}
		\int_0^T \int_0^T \int_{(\bb{R}^d)^N}\int_{(\bb{R}^d)^N} \biggl( \abs{\sigma(t,\bm{x}) - \sigma_\varepsilon(t',\bm{x}')}\del{\partial_t \varphi + \partial_{t'} \varphi}
		\\
		+ 
		\operatorname{sgn}\del{\sigma(t,\bm{x}) - \sigma_\varepsilon(t',\bm{x}')}\del{\operatorname{div}_{\bm{x}}\bm{B}(t,\bm{x},\sigma(t,\bm{x}))  - \operatorname{div}_{\bm{x}'}\bm{B}(t',\bm{x}',\sigma_\varepsilon(t',\bm{x}'))}\varphi \biggr) \dif \bm{x}\dif \bm{x}'\dif t \dif t'
		\\
		\geq  -I_1 - I_2 - \frac{\varepsilon^{2N}}{2\delta} N^2d^{3/2}M_b e^{L_bd(N+1)\del{\tau + \alpha + \gamma}}\int_{(\bb{R}^d)^N} \abs{D_{\bm{x}}\sigma_{0,N}(\bm{x})}\dif \bm{x}.
	\end{multline}
	Our next step is to estimate $I_1$ and $I_2$.
	We rewrite
	\begin{equation}
		\label{I1I2}
		\begin{split}
			I_1 &= \int_0^T \int_0^T \int_{(\bb{R}^d)^N}\int_{(\bb{R}^d)^N}  \operatorname{sgn}\del{\sigma(t,\bm{x}) - \sigma_\varepsilon(t',\bm{x}')}J_\varepsilon(t,t',\bm{x},\bm{x}')\psi\del{\frac{t+t'}{2}}\dif \bm{x}\dif \bm{x}'\dif t \dif t',\\
			I_2 &= -\int_0^T \int_0^T \int_{(\bb{R}^d)^N}\int_{(\bb{R}^d)^N}  \operatorname{sgn}\del{\sigma(t,\bm{x}) - \sigma_\varepsilon(t',\bm{x}')}J(t,t',\bm{x},\bm{x}')\psi\del{\frac{t+t'}{2}}\dif \bm{x}\dif \bm{x}'\dif t \dif t',
		\end{split}
	\end{equation}
	where
	\begin{equation}		
		\begin{split}
			J_\varepsilon(t,t',\bm{x},\bm{x}') &= \del{\bm{B}(t',\bm{x}',\sigma_\varepsilon(t',\bm{x}')) - \bm{B}(t,\bm{x},\sigma_\varepsilon(t',\bm{x}'))} \cdot D_{\bm{x}}\rho\del{t-t',\bm{x}-\bm{x}'}\\
			& \quad	- 	\operatorname{div}_{\bm{x}}\bm{B}(t,\bm{x},\sigma_\varepsilon(t',\bm{x}'))\rho\del{t-t',\bm{x}-\bm{x}'},\\
			J(t,t',\bm{x},\bm{x}') &= \bm{B}(t',\bm{x}',\sigma(t,\bm{x})) - \bm{B}(t,\bm{x},\sigma(t,\bm{x}))\cdot D_{\bm{x}}\rho\del{t-t',\bm{x}-\bm{x}'}\\
			&\quad 
			- \operatorname{div}_{\bm{x}}\bm{B}(t,\bm{x},\sigma(t,\bm{x}))\rho\del{t-t',\bm{x}-\bm{x}'}.
		\end{split}
	\end{equation}
	We will approximate $J_\varepsilon$ with
	\begin{multline}
		K_\varepsilon(t,t',\bm{x},\bm{x}') =
		D_{\bm{x}}\rho\del{t-t',\bm{x}-\bm{x}'}D_{\bm{x}'}\bm{B}(t',\bm{x}',\sigma_\varepsilon(t',\bm{x}'))(\bm{x}'-\bm{x})
		\\
		- 	\operatorname{div}_{\bm{x}'}\bm{B}(t',\bm{x}',\sigma_\varepsilon(t',\bm{x}'))\rho\del{t-t',\bm{x}-\bm{x}'},
	\end{multline}
	which, after a little vector calculus, becomes
	\begin{equation}
		K_\varepsilon(t,t',\bm{x},\bm{x}') = 
		\operatorname{tr}\sbr{D_{\bm{x}'}\bm{B}(t',\bm{x}',\sigma_\varepsilon(t',\bm{x}'))D_{\bm{x}}\del{(\bm{x}'-\bm{x})\rho\del{t-t',\bm{x}-\bm{x}'}}}.
	\end{equation}
	Recall that $\rho = \rho_{\gamma,\delta}$ is supported on $[-\gamma,\gamma] \times [-\delta,\delta]^{Nd}$, and for $\bm{x} \in [\delta,\delta]^{Nd}$ we have $\abs{\bm{x}} \leq (Nd)^{1/2}\delta$. Using \eqref{Bestimates}, we have
	\begin{multline} \label{Jep - Kep}
		\abs{J_\varepsilon(t,t',\bm{x},\bm{x}') - K_\varepsilon(t,t',\bm{x},\bm{x}')}
		\leq \del{H_BNd\delta^2 + \omega(\gamma)}\abs{D_{\bm{x}}\rho\del{t-t',\bm{x}-\bm{x}'}}
		\\	+ \del{H_B (Nd)^{3/2}\delta + \omega(\gamma)} \rho\del{t-t',\bm{x}-\bm{x}'},
	\end{multline}
	where both here and in what follows $\omega(\gamma)$ is an arbitrary modulus with respect to $\gamma$, i.e.~some non-decreasing function $\omega:\intco{0,\infty} \to \intco{0,\infty}$ such that $\lim_{\gamma \to 0} \omega(\gamma) = 0$; its precise definition can change from line to line.
	(Here we are using the continuity of $B$ with respect to $t$.)
	Define
	\begin{equation}
		\tilde I_1 = \int_0^T \int_0^T \int_{(\bb{R}^d)^N}\int_{(\bb{R}^d)^N}  \operatorname{sgn}\del{\sigma(t,\bm{x}) - \sigma_\varepsilon(t',\bm{x}')}K_\varepsilon(t,t',\bm{x},\bm{x}')\psi(t')\dif \bm{x}\dif \bm{x}'\dif t \dif t'.
	\end{equation}
	We deduce from \eqref{Jep - Kep} and \eqref{Dxrho estimate} that
	\begin{equation} \label{I1-tI1}
		\abs{I_1 - \tilde I_1} \leq 2H_B (Nd)^2 \delta + \omega(\gamma).
	\end{equation}
	Similarly, we define
	\begin{equation}
		\begin{aligned}
			K(t,t',\bm{x},\bm{x}') &= \operatorname{tr}\sbr{D_{\bm{x}'}\bm{B}(t',\bm{x}',\sigma(t,\bm{x}))D_{\bm{x}}\del{(\bm{x}'-\bm{x})\rho\del{t-t',\bm{x}-\bm{x}'}}},\\
			\tilde I_2 &= \int_0^T \int_0^T \int_{(\bb{R}^d)^N}\int_{(\bb{R}^d)^N}  \operatorname{sgn}\del{\sigma(t,\bm{x}) - \sigma_\varepsilon(t',\bm{x}')}K(t,t',\bm{x},\bm{x}')\psi(t')\dif \bm{x}\dif \bm{x}'\dif t \dif t'.
		\end{aligned}
	\end{equation}
	By the same reasoning as above, we get
	\begin{equation} \label{I2-tI2}
		\abs{I_2 - \tilde I_2} \leq 2H_B (Nd)^2 \delta + \omega(\gamma).
	\end{equation}
	Now we write
	\begin{multline}
		\tilde I_1 + \tilde I_2 \\
		= \iiiint \operatorname{tr}\sbr{G(t',\bm{x}',\sigma(t,\bm{x}),\sigma_\varepsilon(t',\bm{x}')) D_{\bm{x}}\del{(\bm{x}'-\bm{x})\rho\del{t-t',\bm{x}-\bm{x}'}}}\psi(t')\dif \bm{x}\dif \bm{x}'\dif t \dif t'
	\end{multline}
	where
	\begin{equation}
		G(t',\bm{x}',\sigma,\sigma') := \operatorname{sgn}(\sigma-\sigma') \del{D_{\bm{x}'} \bm{B}(t',\bm{x}',\sigma) - D_{\bm{x}'} \bm{B}(t',\bm{x}',\sigma')}.
	\end{equation}
	Observe that
	\begin{equation}
		\iiiint \operatorname{tr}\sbr{G(t',\bm{x}',\sigma(t,\bm{x}'),\sigma_\varepsilon(t',\bm{x}')) D_{\bm{x}}\del{(\bm{x}'-\bm{x})\rho\del{t-t',\bm{x}-\bm{x}'}}}\psi(t')\dif \bm{x}\dif \bm{x}'\dif t \dif t' = 0.
	\end{equation}
	On the other hand, by \eqref{Bestimates} we have
	\begin{equation}
		\abs{G(t',\bm{x}',\sigma(t,\bm{x}),\sigma_\varepsilon(t',\bm{x}')) - G(t',\bm{x}',\sigma(t,\bm{x}'),\sigma_\varepsilon(t',\bm{x}'))}
		\leq L_b\abs{\sigma(t,\bm{x}) - \sigma(t,\bm{x}')}.
	\end{equation}
	Recalling that for bounded linear operators $A$ and $B$ on $\bb{R}^n$ we have $\abs{\operatorname{tr}(AB)} \leq n|A||B|$, we have
	\begin{equation}
		\begin{aligned}
			\abs{\tilde I_1 + \tilde I_2}
			&\leq L_bNd\iiiint \abs{\sigma(t,\bm{x}) - \sigma(t,\bm{x}')}\abs{D_{\bm{x}}\del{(\bm{x}'-\bm{x})\rho\del{t-t',\bm{x}-\bm{x}'}}}\dif \bm{x}\dif \bm{x}'\dif t \dif t'\\
			&\leq \iiiint \int_0^1 \abs{D_{\bm{x}}\sigma(t,\lambda \bm{x} + (1-\lambda)\bm{x}')}\dif \lambda \biggl(\abs{\bm{x} - \bm{x}'}^2 \abs{D_{\bm{x}} \rho(t-t',\bm{x} - \bm{x}')}\\
			& \phantom{\iiiint \int} \quad + \abs{\bm{x} - \bm{x}'}\rho(t-t',\bm{x} - \bm{x}')\biggr)\dif \bm{x}\dif \bm{x}'\dif t \dif t'
		\end{aligned}
	\end{equation}
	After a change of variable and using \eqref{BV space} and \eqref{Dxrho estimate}, we have
	\begin{equation} \label{tI1 + tI2}
		\abs{\tilde I_1 + \tilde I_2}
		\leq 2L_b(Nd)^{7/2} \delta e^{L_b d (N+1)\del{\tau + \alpha + \gamma}}\int_{(\bb{R}^d)^N} \abs{D_{\bm{x}}\sigma_{0,N}(\bm{x})}\dif \bm{x}.
	\end{equation}
	Combining \eqref{I1-tI1}, \eqref{I2-tI2}, and \eqref{tI1 + tI2} into \eqref{error estimate3}, we get
	\begin{multline} \label{error estimate4}
		\int_0^T \int_0^T \int_{(\bb{R}^d)^N}\int_{(\bb{R}^d)^N} \biggl( \abs{\sigma(t,\bm{x}) - \sigma_\varepsilon(t',\bm{x}')}\del{\partial_t \varphi + \partial_{t'} \varphi}
		\\
		+ 
		\operatorname{sgn}\del{\sigma(t,\bm{x}) - \sigma_\varepsilon(t',\bm{x}')}\del{\operatorname{div}_{\bm{x}}\bm{B}(t,\bm{x},\sigma(t,\bm{x}))  - \operatorname{div}_{\bm{x}'}\bm{B}(t',\bm{x}',\sigma_\varepsilon(t',\bm{x}'))}\varphi \biggr) \dif \bm{x}\dif \bm{x}'\dif t \dif t'
		\\
		\geq -4H_B (Nd)^2 \delta - \omega(\gamma) -\del{2\delta L_b(Nd)^{7/2} + \frac{\varepsilon^{2N}}{2\delta} M_bN^2d^{3/2}} e^{L_b d (N+1)\del{\tau + \alpha + \gamma}}\int_{(\bb{R}^d)^N} \abs{D_{\bm{x}}\sigma_{0,N}(\bm{x})}\dif \bm{x}.
	\end{multline}
	Now observe that
	\begin{equation}
		\begin{aligned}
			\partial_t \varphi(t,\bm{x},t',\bm{x}') + \partial_{t'} \varphi(t,\bm{x},t',\bm{x}') &= \partial_t\psi_{\alpha}\del{\frac{t+t'}{2}}\rho_{\gamma,\delta}(t-t',\bm{x}-\bm{x}')\\
			&= \del{\zeta_\alpha'\del{\frac{t+t'}{2} - \tau} - \zeta_\alpha'\del{\frac{t+t'}{2} - \tau_0}}\rho_{\gamma,\delta}(t-t',\bm{x}-\bm{x}')
		\end{aligned}
	\end{equation}
	We also have, using \eqref{Bestimates},
	\begin{multline}
		\iiiint \operatorname{sgn}\del{\sigma(t,\bm{x}) - \sigma_\varepsilon(t',\bm{x}')}\del{\operatorname{div}_{\bm{x}}\bm{B}(t,\bm{x},\sigma(t,\bm{x}))  - \operatorname{div}_{\bm{x}'}\bm{B}(t',\bm{x}',\sigma_\varepsilon(t',\bm{x}'))}\varphi \dif \bm{x}\dif \bm{x}'\dif t \dif t'\\
		\leq L_bNd\iiiint \abs{\sigma(t,\bm{x}) - \sigma_\varepsilon(t',\bm{x}')}\psi_{\alpha}\del{\frac{t+t'}{2}}\rho_{\gamma,\delta}(t-t',\bm{x}-\bm{x}')\dif \bm{x}\dif \bm{x}'\dif t \dif t'\\
		+ H_B(Nd)^{3/2}\delta + \omega(\gamma). 
	\end{multline}
	Using these estimates, we let $\gamma \to 0$ and then $\alpha \to 0$ in \eqref{error estimate4}; using the fact that $\sigma$ and $\sigma_\varepsilon$ are in $C([0,T];L^1(\bb{R}^{Nd}))$ and appealing to standard convolution approximation arguments, we get
	\begin{multline} \label{error estimate5}
		\int_{(\bb{R}^d)^N}\int_{(\bb{R}^d)^N} \del{\abs{\sigma(\tau_0,\bm{x}) - \sigma_\varepsilon(\tau_0,\bm{x}')}
			- \abs{\sigma(\tau,\bm{x}) - \sigma_\varepsilon(\tau,\bm{x}')}}\rho_{\delta}(\bm{x}-\bm{x}')\dif \bm{x}\dif \bm{x}'
		\\
		+ 
		L_bNd\int_0^\tau \int_{(\bb{R}^d)^N}\int_{(\bb{R}^d)^N}\abs{\sigma(t,\bm{x}) - \sigma_\varepsilon(t,\bm{x}')}\rho_{\delta}(\bm{x}-\bm{x}')\dif \bm{x}\dif \bm{x}'\dif t
		\\
		\geq -5H_B (Nd)^2 \delta  -\del{2\delta L_b(Nd)^{7/2} + \frac{\varepsilon^{2N}}{2\delta} M_bN^2d^{3/2}} e^{L_b d (N+1)\tau}\int_{(\bb{R}^d)^N} \abs{D_{\bm{x}}\sigma_{0,N}(\bm{x})}\dif \bm{x}
	\end{multline}
	where $\rho_{\delta}(\bm{x}) := \delta^{-nd}\prod_{k=1}^{n}\prod_{\ell=1}^d\rho(\delta^{-1}x_k^\ell)$.
	Using \eqref{BV space}, we have
	\begin{multline}
		\abs{\int_{(\bb{R}^d)^N}\int_{(\bb{R}^d)^N}\abs{\sigma(t,\bm{x}) - \sigma_\varepsilon(t,\bm{x}')}\rho_{\delta}(\bm{x}-\bm{x}')\dif \bm{x}\dif \bm{x}' - \int_{(\bb{R}^d)^N}\abs{\sigma(t,\bm{x}) - \sigma_\varepsilon(t,\bm{x})}\dif \bm{x}}\\
		\leq 
		Nd \delta e^{L_b d (N+1)t}\int_{(\bb{R}^d)^N} \abs{D_{\bm{x}}\sigma_{0,N}(\bm{x})}\dif \bm{x}
	\end{multline}
	for every $t$.
	We use this estimate in \eqref{error estimate5} and let $\tau_0 \to 0$ to get
	\begin{multline}
		\int_{(\bb{R}^d)^N}\abs{\sigma(\tau,\bm{x}) - \sigma_\varepsilon(\tau,\bm{x})}\dif \bm{x}
		\leq L_bNd\int_0^\tau \int_{(\bb{R}^d)^N}\abs{\sigma(t,\bm{x}) - \sigma_\varepsilon(t,\bm{x})}\dif \bm{x}\dif t\\
		+ 5H_B (Nd)^2 \delta  +\del{2\delta L_b(Nd)^{7/2} + \frac{\varepsilon^{2N}}{2\delta} M_bN^2d^{3/2} + 3Nd\delta} e^{L_b d (N+1)\tau}\int_{(\bb{R}^d)^N} \abs{D_{\bm{x}}\sigma_{0,N}(\bm{x})}\dif \bm{x}.
	\end{multline}
	Since $\tau$ is arbitrary, Gronwall's Lemma implies
	\begin{multline} \label{error estimate6}
		\int_{(\bb{R}^d)^N}\abs{\sigma(\tau,\bm{x}) - \sigma_\varepsilon(\tau,\bm{x})}\dif \bm{x}
		\leq 5H_B (Nd)^2 \delta e^{L_b Nd \tau}\\
		+   \del{2\delta L_b(Nd)^{7/2} + \frac{\varepsilon^{2N}}{2\delta} M_bN^2d^{3/2} + 3Nd\delta} e^{L_b d (2N+1)\tau}\int_{(\bb{R}^d)^N} \abs{D_{\bm{x}}\sigma_{0,N}(\bm{x})}\dif \bm{x}.
	\end{multline}
	Take $\delta = \frac{\varepsilon^N}{2}$ in \eqref{error estimate6}.
	By simple estimates on the constants, we arrive at
	\begin{multline} \label{error estimate6}
		\int_{(\bb{R}^d)^N}\abs{\sigma(\tau,\bm{x}) - \sigma_\varepsilon(\tau,\bm{x})}\dif \bm{x}\\
		\leq 
		\varepsilon^N (Nd)^{7/2}e^{L_b d (2N+1)\tau}\del{2H_B + \del{L_b + M_b + 2} \int_{(\bb{R}^d)^N} \abs{D_{\bm{x}}\sigma_{0,N}(\bm{x})}\dif \bm{x}},
	\end{multline}
	thus proving Theorem \ref{thm:error estimates}.

	\subsection{Homogenizing the error estimates with respect to $N$} \label{sec:convergence}
	
	Our goal in this section is to prove Corollary \ref{cor:error estimates}, which is interpreted as an error estimate that is uniform with respect to $N$, i.e.~with respect to all possible empirical measures.
	We make use of the following seminorm.
	For $f:\sr{P}_2(\bb{R}^d) \to \bb{R}$ measurable, we define
	\begin{equation} \label{norm}
		\enVert{f} := \limsup_{N \to \infty} \del{\int_{(\bb{R}^d)^N} \abs{f_N(\bm{x})}^N\dif \bm{x}}^{1/N}
		= \limsup_{N \to \infty} \del{\int_{(\bb{R}^d)^N} \abs{f(\mu^N_{\bm{x}})}^N\dif \bm{x}}^{1/N}.
	\end{equation}
	It is straightforward to check that \eqref{norm} defines a seminorm on the space of measurable functions.
	
	Before proceeding to the proof of Corollary \ref{cor:error estimates}, it will be useful to point out some properties of the seminorm defined in \eqref{norm}.
	First, let us consider $f = 1_{B_R(\delta_0)}$, where $\delta_0$ denotes the Dirac mass at the origin, so that
	\begin{equation}
		B_R(\delta_0) = \cbr{\mu \in \sr{P}_2(\bb{R}^d) : \del{\int \abs{x}^2 \dif \mu(x)}^{1/2} \leq R}.
	\end{equation}
	Thus
	\begin{equation}
		f(\mu) = 1_{B_R(\delta_0)}(\mu) = \begin{cases}
			1 &\text{if}~\del{\int \abs{x}^2 \dif \mu(x)}^{1/2} \leq R,\\
			0 &\text{if}~\del{\int \abs{x}^2 \dif \mu(x)}^{1/2} > R.
		\end{cases}
	\end{equation}
	It is instructive to compute $\enVert{1_{B_R(\delta_0)}}$.
	Observe that
	\begin{equation}
		f_N(\bm{x}) = 1_{B_R(\delta_0)}(\mu^N_{\bm{x}}) = \begin{cases}
			1 &\text{if}~\abs{\bm{x}} \leq \sqrt{N}R,\\
			0 &\text{if}~\abs{\bm{x}} > \sqrt{N} R
		\end{cases}
		= 1_{B_{\sqrt{N}R}(0)}(\bm{x}).
	\end{equation}
	Thus $\enVert{1_{B_R(\delta_0)}}$ essentially comes down to the following result.
	\begin{lemma} \label{lem:ball in n dim}
		Let $\s{L}^n$ denote $n$-dimensional Lebesgue measure and $B_1$ a ball of radius 1.
		Then
		\begin{equation}
			\lim_{n \to \infty} n^{1/2}\s{L}^n(B_1)^{1/n} = \sqrt{2\pi e}.
		\end{equation}
	\end{lemma}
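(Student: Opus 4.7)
The plan is to prove this by invoking the classical closed-form expression for the volume of the unit ball in $\mathbb{R}^n$ together with Stirling's asymptotic formula for the Gamma function. Specifically, I will use
\begin{equation}
\s{L}^n(B_1) = \frac{\pi^{n/2}}{\Gamma\!\del{\frac{n}{2} + 1}},
\end{equation}
which is standard and requires no additional setup. Substituting this into the expression of interest gives
\begin{equation}
n^{1/2}\s{L}^n(B_1)^{1/n} = \frac{n^{1/2}\pi^{1/2}}{\Gamma\!\del{\frac{n}{2}+1}^{1/n}},
\end{equation}
so the entire problem reduces to computing the asymptotic behavior of $\Gamma(\frac{n}{2}+1)^{1/n}$.

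For this, I would apply Stirling's formula in the form $\Gamma(z+1) = \sqrt{2\pi z}\,(z/e)^z (1 + o(1))$ as $z \to \infty$, with $z = n/2$. Taking the $n$-th root gives
\begin{equation}
\Gamma\!\del{\frac{n}{2}+1}^{1/n} = \bigl(2\pi \cdot n/2\bigr)^{1/(2n)}\del{\frac{n}{2e}}^{1/2}(1 + o(1)).
\end{equation}
The prefactor $(\pi n)^{1/(2n)}$ tends to $1$ as $n \to \infty$ (since $\log(\pi n)/(2n) \to 0$), so $\Gamma(\frac{n}{2}+1)^{1/n} \sim (n/(2e))^{1/2}$. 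Plugging back in,
\begin{equation}
n^{1/2}\s{L}^n(B_1)^{1/n} \sim \frac{n^{1/2}\pi^{1/2}}{(n/(2e))^{1/2}} = \sqrt{2\pi e},
\end{equation}
which gives the claimed limit.

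There is no real obstacle here since both inputs (the volume formula and Stirling) are standard; the only care needed is verifying that the subexponential factor $(\pi n)^{1/(2n)}$ indeed tends to $1$, which follows from $\lim_{n\to\infty} n^{1/n} = 1$. I would write the proof in two short displays: one stating the volume formula and Stirling's asymptotic, and one computing the limit.
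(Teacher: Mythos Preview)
Your proposal is correct. Both you and the paper start from the same volume formula $\s{L}^n(B_1) = \pi^{n/2}/\Gamma(n/2+1)$, so the task reduces to the asymptotics of $\Gamma(n/2+1)^{1/n}$, but the routes diverge there. You invoke Stirling's formula directly, which is the quickest path and makes the computation essentially a one-liner. The paper instead goes back to the integral representation $\Gamma(n/2+1) = \int_0^\infty t^{n/2}e^{-t}\,\dif t$, changes variables to write this as $n^{n/2+1}\int_0^\infty (t^{1/2}e^{-t})^n\,\dif t$, and then uses the fact that the $L^n$ norm of the bounded function $t^{1/2}e^{-t}$ converges to its supremum $(2e)^{-1/2}$. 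This is a self-contained Laplace-type argument that avoids quoting Stirling as a black box, at the cost of a few more lines; your approach is shorter but relies on a named external result. Either is perfectly adequate for this lemma.
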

	
	\begin{proof}
		Define $A_n := n^{1/2}\s{L}^n(B_1)^{1/n}$
		Recall the well-known formula
		\begin{equation}
			\s{L}^n(B_1) = \frac{\pi^{n/2}}{\Gamma\del{\frac{n}{2}+1}}.
		\end{equation}
		Thus $A_n = (n\pi)^{1/2}\Gamma\del{\frac{n}{2}+1}^{-1/n}$.
		
		By a change of variables, we have
		\begin{equation}
			\Gamma\del{\frac{n}{2}+1} = \int_0^\infty t^{n/2}e^{-t}\dif t = n^{\frac{n}{2}+1}\int_0^\infty \del{t^{1/2}e^{-t}}^n\dif t.
		\end{equation}
		The function $t^{1/2}e^{-t}$ is bounded on $\intco{0,\infty}$, and a simple calculation shows that its maximum is $(2e)^{-1/2}$.
		We deduce that
		\begin{equation}
			n^{-1/2}\Gamma\del{\frac{n}{2}+1}^{1/n} = n^{1/n}\del{\int_0^\infty \del{t^{1/2}e^{-t}}^n\dif t}^{1/n} \to (2e)^{-1/2},
		\end{equation}
		from which the claim follows.
	\end{proof}
	\begin{corollary}
		\label{cor:norm BR}
		$\enVert{1_{B_R(\delta_0)}} = R^d(2\pi e/d)^{d/2}$.
		Thus, if $f:\sr{P}_2(\bb{R}^d) \to \bb{R}$ is any bounded measurable function whose support is contained in the set $B_R(\delta_0)$, then $\enVert{f} \leq R^d(2\pi e/d)^{d/2}\enVert{f}_u$, where $\enVert{f}_u = \sup_m \abs{f(m)}$.
	\end{corollary}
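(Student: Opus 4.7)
The plan is to reduce the computation to Lemma \ref{lem:ball in n dim} via the scaling identity already observed in the excerpt. The starting point is the formula $f_N(\bm{x}) = 1_{B_{\sqrt{N}R}(0)}(\bm{x})$ derived just above the statement. Since an indicator function raised to the $N$th power is itself, we have
\begin{equation*}
\int_{(\bb{R}^d)^N} \abs{f_N(\bm{x})}^N \dif \bm{x}
= \s{L}^{Nd}\del{B_{\sqrt{N}R}(0)}
= (\sqrt{N}R)^{Nd}\s{L}^{Nd}(B_1),
\end{equation*}
so that
\begin{equation*}
\del{\int_{(\bb{R}^d)^N} \abs{f_N(\bm{x})}^N \dif \bm{x}}^{1/N}
= R^d N^{d/2}\s{L}^{Nd}(B_1)^{1/N}.
\end{equation*}

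The next step is to rewrite $N^{d/2}\s{L}^{Nd}(B_1)^{1/N}$ in a form to which Lemma \ref{lem:ball in n dim} applies. Factoring out the dimension, one has
\begin{equation*}
N^{d/2}\s{L}^{Nd}(B_1)^{1/N}
= d^{-d/2}\sbr{(Nd)^{1/2}\s{L}^{Nd}(B_1)^{1/(Nd)}}^d,
\end{equation*}
and Lemma \ref{lem:ball in n dim}, applied along the subsequence $n = Nd$, gives $(Nd)^{1/2}\s{L}^{Nd}(B_1)^{1/(Nd)} \to \sqrt{2\pi e}$. Passing to the limit yields the equality $\enVert{1_{B_R(\delta_0)}} = R^d(2\pi e/d)^{d/2}$. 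Since the sequence converges, the $\limsup$ is in fact a genuine limit.

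For the second assertion, the hypothesis $\mathrm{supp} f \subseteq B_R(\delta_0)$ together with the uniform bound $\abs{f(m)} \leq \enVert{f}_u$ gives the pointwise estimate $\abs{f(m)} \leq \enVert{f}_u 1_{B_R(\delta_0)}(m)$ on all of $\sr{P}_2(\bb{R}^d)$. Evaluating on empirical measures and taking $N$th powers,
\begin{equation*}
\abs{f_N(\bm{x})}^N \leq \enVert{f}_u^N 1_{B_{\sqrt{N}R}(0)}(\bm{x}),
\end{equation*}
after which integrating, taking the $N$th root, and passing to the $\limsup$ as in the first part delivers $\enVert{f} \leq R^d(2\pi e/d)^{d/2}\enVert{f}_u$. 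I do not anticipate any real obstacle here: everything reduces to the scaling of Lebesgue measure on balls and to Lemma \ref{lem:ball in n dim}. The only mild subtlety is making sure to extract the factor $d^{-d/2}$ correctly so that the dimension of the underlying Euclidean space (which is $Nd$, not $N$) is fed into Lemma \ref{lem:ball in n dim} in place of $n$; that is a one-line algebraic check.
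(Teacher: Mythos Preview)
Your proof is correct and follows essentially the same route as the paper: both reduce the computation of $\enVert{1_{B_R(\delta_0)}}$ to Lemma \ref{lem:ball in n dim} applied with $n = Nd$, differing only in whether one takes the $1/(Nd)$ root first and raises to the $d$th power at the end (the paper) or takes the $1/N$ root and algebraically extracts the factor $d^{-d/2}$ (your version). Your treatment of the second assertion via the pointwise bound $\abs{f(m)} \leq \enVert{f}_u 1_{B_R(\delta_0)}(m)$ is exactly what the paper's ``follows easily'' is pointing to.
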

	
	\begin{proof}
		Let $f = 1_{B_R(\delta_0)}$.
		From Lemma \ref{lem:ball in n dim} we have
		\begin{equation}
			\begin{aligned}
				\del{\int_{(\bb{R}^{d})^N} \abs{f_N(\bm{x})}^N \dif \bm{x}}^{1/Nd}
				& = \s{L}^{Nd}(B_{N^{1/2}R}(0))^{1/Nd}\\
				&= N^{1/2}R\s{L}^{Nd}(B_{1}(0))^{1/Nd}\\
				& \to R(2\pi e/d)^{1/2},
			\end{aligned}
		\end{equation}
		and the claim follows by taking both sides to the $d$ power.
		The claim for general bounded $f$ with support in $B_R(\delta_0)$ follows easily.
	\end{proof}
	It is worth noticing the following result, which follows immediately from Corollary \ref{cor:norm BR}.
	\begin{corollary} \label{cor:uniform}
		Let $\{f_n\}$ be a sequences of bounded measurable functions on $\sr{P}_2(\bb{R}^d)$ with support in $B_R(\delta_0)$ that converge uniformly to $f$.
		Then $\enVert{f_n - f} \to 0$.
	\end{corollary}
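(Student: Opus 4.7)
The plan is to derive Corollary \ref{cor:uniform} as a direct consequence of Corollary \ref{cor:norm BR} together with the fact that $\enVert{\cdot}$ is a seminorm. The essential observation is that if $f_n \to f$ uniformly and each $f_n$ vanishes off $B_R(\delta_0)$, then $f_n - f$ itself is a bounded measurable function supported in $B_R(\delta_0)$, so the quantitative bound of Corollary \ref{cor:norm BR} applies to the difference.

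First I would check that $f$ has support contained in $B_R(\delta_0)$: for any $m \notin B_R(\delta_0)$, uniform convergence gives $f(m) = \lim_n f_n(m) = 0$ since each $f_n(m) = 0$ by hypothesis. Consequently, the difference $g_n := f_n - f$ is a bounded measurable function on $\sr{P}_2(\bb{R}^d)$ with support in $B_R(\delta_0)$, and the uniform norm $\enVert{g_n}_u = \sup_m |f_n(m) - f(m)|$ tends to zero by hypothesis.

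Applying Corollary \ref{cor:norm BR} to $g_n$, we obtain
\begin{equation*}
\enVert{f_n - f} \leq R^d(2\pi e/d)^{d/2}\enVert{f_n - f}_u,
\end{equation*}
and letting $n \to \infty$ yields $\enVert{f_n - f} \to 0$, as desired.

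There is essentially no obstacle here; the only small point worth verifying carefully is that the support of the pointwise (in fact uniform) limit inherits the support bound, which is immediate from uniform convergence evaluated at each $m$. All of the work has already been absorbed into Corollary \ref{cor:norm BR} and Lemma \ref{lem:ball in n dim}, which supply the dimension-dependent constant $R^d(2\pi e/d)^{d/2}$. Hence the proof reduces to a one-line application of that corollary to the difference $f_n - f$.
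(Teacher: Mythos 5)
Your proposal is correct and is exactly the argument the paper intends (the paper simply states the corollary "follows immediately from Corollary \ref{cor:norm BR}"): apply the bound $\enVert{g} \leq R^d(2\pi e/d)^{d/2}\enVert{g}_u$ to $g = f_n - f$, after noting the limit $f$ inherits the support condition. Nothing further is needed.
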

	The converse of Corollary \ref{cor:uniform} does not hold, since if $f_n = 1_{B_{1/n}(\delta_0)}$ then $\enVert{f_n} \to 0$ but $f_n$ does not converge to 0 uniformly.
	Thus convergence in the seminorm is strictly weaker than uniform convergence for sequences of functions with bounded support.
	
	Let $\sigma_N$ denote the entropy solution of \eqref{transport in R^dN} and $\sigma_{N,\varepsilon}$ the classical solution of \eqref{transport in R^dN ep}.
	By the maximum principle, $\abs{\sigma_N}$ and $\abs{\sigma_{N,\varepsilon}}$ are both bounded by $\enVert{\sigma_0}_u := \sup_m \abs{\sigma_0(m)}$.
	It follows from Theorem \ref{thm:error estimates} that
	\begin{multline} \label{sigmaN - sigmaNep to N}
		\del{\int_{(\bb{R}^d)^N} \abs{\sigma_N(t,\bm{x}) - \sigma_{N,\varepsilon}(t,\bm{x})}^N \dif \bm{x}}^{1/N}\\
		\leq C_b^{1/N}\varepsilon\enVert{\sigma_0}_u^{1-1/N} (Nd)^{7/2N}e^{L_b(2+1/N) d t}\del{1 + \del{\int_{(\bb{R}^d)^N} \abs{D_{\bm{x}} \sigma_{0,N}(\bm{x})}\dif \bm{x}}^{1/N}}.
	\end{multline}
	Notice also that
	\begin{equation}
		\abs{D_{\bm{x}}\sigma_{0,N}(\bm{x})}
		= \del{\sum_{j=1}^N \abs{\frac{1}{N}D_m \sigma_0(\mu^N_{\bm{x}},x_j)}^2}^{1/2}
		\leq N^{-1/2}\enVert{D_m \sigma_0}_u,
	\end{equation}
	and so, because of Assumption \ref{sigma0 cpct supp}, we have
	\begin{equation}
		\int_{(\bb{R}^d)^N} \abs{D_{\bm{x}}\sigma_{0,N}(\bm{x})}\dif \bm{x}
		\leq \int_{B_{\sqrt{N}R}} N^{-1/2}\enVert{D_m \sigma_0}_u \dif \bm{x}
		= N^{(Nd - 1)/2}R^{Nd} \s{L}^{Nd}(B_1)\enVert{D_m \sigma_0}_u.
	\end{equation}
	By Lemma \ref{lem:ball in n dim}, we have
	\begin{equation}
		\del{\int_{(\bb{R}^d)^N} \abs{D_{\bm{x}}\sigma_{0,N}(\bm{x})}\dif \bm{x}}^{1/Nd}
		\leq N^{1/2}R \s{L}^{Nd}(B_1)^{1/Nd}\enVert{D_m \sigma_0}_u^{1/Nd} \to R(2\pi e/d)^{1/2}.
	\end{equation}
	Thus, letting $N \to \infty$ in \eqref{sigmaN - sigmaNep to N}, we derive the estimate in Corollary \ref{cor:error estimates}.

	\section{Conclusion} \label{sec:conclusion}
	In this paper we have used a simple class of mean field games to explore the issue of non-uniqueness for Nash equilibrium.
	We have constructed explicit examples where uniqueness can fail, thus demonstrating the need for a selection principle.
	The selection principle we have proposed relies on the classical theory of entropy solution, and it selects an equilibrium whenever the given initial measure is discrete, i.e.~an empirical measure.
	To this we add the caveat that the selection is only defined for ``almost every'' empirical measure, i.e.~for $m = \mu_{\bm{x}}^N$ for a.e.~$\bm{x} \in (\bb{R}^d)^N$, for every $N$.
	This selection can be interpreted as a ``vanishing noise'' selection because of Theorem \ref{thm:error estimates}, and Corollary \ref{cor:error estimates} shows that the error between the selected equilibrium and the perturbed equilibrium from \eqref{trembling hand} can be made uniform with respect to $N$.
	
	We are left, however, with some open questions.
	Our method does not directly provide a selection principle for arbitrary initial measures.
	Since any measure in the Wasserstein space can be approximated by empirical measures, one might expect that a selection principle for empirical measures to extend more generally.
	However, our error estimates rely on $L^1$ estimates in $(\bb{R}^d)^N$, which do not extend naturally to the whole Wasserstein space.
	Thus, a major open question is whether and how a vanishing noise limit could be established for measures that are not discrete.
	A second open question would be to prove the conjecture mentioned in Remark \ref{rem:conjecture}, concerning whether entropy solutions necessarily give genuine equilibria. 
	Finally, concerning the $N$-player game, we know that System \eqref{eq:N player equilibrium} is associated with a system of PDEs.
	While the $N$-player equilibria are expected to converge to mean field equilibria, it is not clear whether one can prove a similar result for entropy solutions to the PDE system.

	To conclude, it is worth remarking that we have seen rather deep mathematical questions arise from a seemingly very simple class of mean field games.
	The questions left open are quite difficult in full generality and will perhaps be best approached using new families of special cases.

	\appendix
	
	\section{Optimal discretization of a measure on $\bb{R}$} \label{sec:optimal discretization}
	
	Fix $m \in \sr{P}_2(\bb{R})$ and $n \in \bb{N}$.
	For any ${\bf x} = (x_1,\ldots,x_n)$ with $x_1 \leq \cdots \leq x_n$ be given and set $\mu_{\bm{x}}^N = \frac{1}{N}\sum_{j=1}^N \delta_{x_j}$.
	Our goal is to find the discrete measure $\mu_{\bm{x}}^N$ that minimizes $W_2(\mu_{\bm{x}}^N,m)$.
	We call this the ``optimal discretization with $n$ points" of $m$.
	
	\subsection{The partition of $m$} \label{sec:partition}
	We will partition the mass of $m$ into $n$ equal portions, from left to right. To be precise, suppose $I_1 = \intoc{-\infty,a_1}, I_n = \intco{a_{N-1},\infty},$ and $I_j = \intcc{a_{j-1},a_j}$ for $j = 2,\ldots,N-1$ for some $a_1 \leq \cdots \leq a_{N-1}$. Note that some of these ``intervals'' may in fact be singleton sets.
	We will show how to choose $a_1,\ldots,a_{N-1}$ and construct measures $m_{N,j}$ on each $I_j$ such that
	\begin{itemize}
		\item $m_{N,j}|_{I_j^\circ} = m|_{I_j^\circ}$ for all $j=1,\ldots,N$, where $I_j^\circ$ denotes the interior of $I_j$;
		\item $m_{N,j}(I_j) = \frac{1}{N}$; and
		\item $m = \sum_{j=1}^N m_{N,j}$.
	\end{itemize} 
	
	We define the sequence $a_1 \leq a_2 \leq \cdots \leq a_{N-1}$ as follows:
	\begin{equation*}
		a_{j} = \min\cbr{x : m\del{\intoc{-\infty,x}} \geq \frac{j}{N}}.
	\end{equation*}
	Now to construct the measures $m_{N,j}$, we proceed as follows.
	\begin{enumerate}
		\item Of necessity, $m_{N,1} = m|_{(-\infty,a_1)} + r_1\delta_{a_1}$ where $r_1 = \frac{1}{N} - m\del{(-\infty,a_1)}$ and, similarly, $m_{N,N} = m|_{(a_{N-1},\infty)} + \ell_n\delta_{a_{N-1}}$ where $\ell_n = \frac{1}{N} - m\del{(a_{N-1},\infty)}$.
		\item Partition the sequence $a_1,\ldots,a_{N-1}$ into $a_{j_1},\ldots,a_{j_2-1},a_{j_2},\ldots,a_{j_3-1},\ldots,a_{j_{k}},\ldots,a_{N-1}$ where for each $i = 1,\ldots,k$, $a_{j_i-1} < a_{j_{i}} = \ldots = a_{j_{i+1}-1}$ (with $j_1 := 1$ and $j_{k+1} := n$); hence $k$ is the number of \emph{distinct} points in the sequence $a_1,\ldots,a_{N-1}$.
		\item For each $j$ such that $a_{j-1} = a_{j}$, we have $I_j = \{a_j\}$, and so of necessity we take $m_{N,j} = \frac{1}{N}\delta_{a_j}$.
		Hence for each $i=1,\ldots,k$, for each $j = j_{i}+1,\ldots,j_{i+1}-1$, we have $m_{N,j} = \frac{1}{N}\delta_{a_{j_i}}$.
		\item For $i = 2,\ldots,k$, let $j = j_i$. Since $m_{N,j_i}$ is defined on $I_{j_i} = [a_{j_i-1},a_{j_i}] = [a_{j_{i-1}},a_{j_i}]$ and $m_{N,j_i}|_{(a_{j_{i-1}},a_{j_i})} = m|_{(a_{j_{i-1}},a_{j_i})}$, of necessity we have $m_{N,j_i} = m|_{(a_{j_{i-1}},a_{j_i})} + \ell_i \delta_{a_{j_{i-1}}} + r_i \delta_{a_{j_i}}$ for some non-Negative numbers $\ell_i$ and $r_i$.
		These must satisfy the following:
		\begin{equation*}
			\begin{cases}
				m\del{(a_{j_{i-1}},a_{j_i})} + \ell_i + r_i = \frac{1}{N}, &\text{to satisfy}~m_{N,j_i}(I_{j_i}) = \frac{1}{N},\\
				r_{i-1} + \frac{j_i - j_{i-1}-1}{N} + \ell_i = m(\{a_{j_{i-1}}\}), &\text{to satisfy}~m(\{a_{j_{i-1}}\}) = \sum_{j=1}^N m_{N,j}(\{a_{j_{i-1}}\}).
			\end{cases}
		\end{equation*}
		It is straightforward to define these constants inductively, namely by defining $\ell_i$ using the second equation and then $r_i$ using the first, and also to see that they will be non-Negative constants that account for all the mass of $m$ concentrated at the points $a_{j_{i}}$.
		This completes the construction.
	\end{enumerate}
	
	As a simple illustration, consider the following graph:
	\begin{center}
		\begin{tikzpicture}[scale=5]
			
			\draw[->] (-0.1,0) -- (1.1,0) node[right] {$x$};
			\draw[->] (0,-0.1) -- (0,1.1) node[above] {$f(x)$};
			\draw (1/3,0.01) -- (1/3, -0.01) node[below] {$1/3$};
			\draw (0.01,1/3) -- (-0.01,1/3) node[left] {$1/3$};			
			\draw (2/3,0.01) -- (2/3, -0.01) node[below] {$2/3$};
			\draw (0.01,2/3) -- (-0.01,2/3) node[left] {$2/3$};
			\draw (1,0.01) -- (1, -0.01) node[below] {$1$};
			\draw (0.01,1) -- (-0.01,1) node[left] {$1$};
			
			\draw[domain=0:1/3, smooth, samples=100, blue] plot (\x, \x);
			\draw[domain=1/3:2/3, smooth, samples=100, blue] plot (\x, \x + 1/3);
			\draw[domain=2/3:1, smooth, samples=100, blue] plot (\x, 1);
			
			\foreach \x in {1/9, 2/9, 1/3}
			\draw (\x,\x + 0.01) -- (\x,\x -0.01);		
			\foreach \x in {4/9, 5/9, 2/3}
			\draw (\x,\x + 1/3 + 0.01) -- (\x,\x + 1/3 -0.01);
			
			\foreach \y in {4/9, 5/9, 2/3}
			\draw (1/3 - .01,\y) -- (1/3  + .01,\y);
			
		\end{tikzpicture}
	\end{center}
	Suppose this is the cumulative distribution function of a measure $m$.
	The jump at $x = 1/3$ represents a Dirac mass at $m$.
	To divide $m$ into 9 equal parts from left to right, we will get six densities and three Dirac masses:
	\begin{itemize}
		\item $m_{9,1},m_{9,2},m_{9,3}$ all have density $f(x) = x$ restricted to the intervals $[0,1/9], [1/9,2/9],$ and $[2/9,1/3]$, respectively;
		\item $m_{9,7},m_{9,8},m_{9,9}$ all have density $f(x) = x + 1/3$ restricted to the intervals $[1/3,4/9], [4/9,5/9],$ and $[5/9,2/3]$, respectively;
		\item $m_{9,4},m_{9,5},m_{9,6}$ are equal to the Dirac mass $\frac{1}{9}\delta_{1/3}$.
	\end{itemize}
	
	It is straightforward to see that our construction is unique.
	That is, if $I_1,\ldots,I_n$ are any intervals ordered from left to right on the real line with $m_{N,j}$ satisfying the three properties above, then of necessity both $I_j$ and $m_{N,j}$ are exactly as defined above.
	
	\subsection{Optimal discretization of $m$}
	
	Let $\mu_{\bm{x}}^N = \frac{1}{N}\sum_{j=1}^N \delta_{x_j}$ be any discrete measure with $n$ points. Our goal is to determine how to choose $x_1,\ldots,x_n$ in such a way as to minimize $W_2(\mu_{\bm{x}}^N,m)$. 
	\begin{proposition} \label{prop:transport plan}
		If $\pi \in \Pi(\mu_{\bm{x}}^N,m)$ is an optimal transport plan, then
		\begin{equation} \label{eq:optimal coupling with emp}
			\pi = \sum_{j=1}^N \delta_{x_j} \times m_{N,j}.
		\end{equation}
	\end{proposition}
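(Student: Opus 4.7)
The strategy is to identify $\pi_0 := \sum_{j=1}^N \delta_{x_j} \times m_{N,j}$ as the monotone rearrangement coupling of $\mu_{\bm{x}}^N$ and $m$, and then invoke the classical characterization of optimal transport plans on $\bb{R}$ for a strictly convex cost. This keeps the analytical content entirely within one dimension, where the theory is very clean.

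First I would check that $\pi_0 \in \Pi(\mu_{\bm{x}}^N, m)$. This is immediate from the three defining properties of the partition $\{m_{N,j}\}$ constructed in Section \ref{sec:partition}: the first marginal of $\pi_0$ equals $\sum_{j=1}^N m_{N,j}(\bb{R})\,\delta_{x_j} = \frac{1}{N}\sum_{j=1}^N \delta_{x_j} = \mu_{\bm{x}}^N$, while the second marginal equals $\sum_{j=1}^N m_{N,j} = m$.

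Next I would show that $\operatorname{supp}(\pi_0)$ is monotone, i.e., whenever $(x,y), (x',y') \in \operatorname{supp}(\pi_0)$ with $x < x'$, one has $y \leq y'$. Indeed, such points are necessarily of the form $x = x_j$ with $y \in \operatorname{supp}(m_{N,j}) \subset I_j$ and $x' = x_k$ with $y' \in \operatorname{supp}(m_{N,k}) \subset I_k$. Since $x_1 \leq \cdots \leq x_N$ and $x_j < x_k$, we must have $j < k$, and because the intervals $I_j$ are ordered from left to right with at most shared endpoints, $y \leq \sup I_j \leq \inf I_k \leq y'$. With this in hand, the classical one-dimensional theory of optimal transport for the strictly convex cost $c(x,y) = |x-y|^2$ (see e.g.~\cite{villani2008optimal}) yields two facts: any optimal plan in $\Pi(\mu_{\bm{x}}^N,m)$ has monotone support (via $c$-cyclic monotonicity, or equivalently a swapping argument showing that any non-monotone pair strictly decreases the cost), and the monotone coupling between two probability measures on $\bb{R}$ is unique. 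The first fact shows that $\pi_0$ is itself optimal; the second reduces the claim to a uniqueness statement among monotone couplings.

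To close the uniqueness step I would group the $x_j$'s by distinct values $z_1 < \cdots < z_K$ with multiplicities $k_1, \ldots, k_K$ and decompose any $\pi \in \Pi(\mu_{\bm{x}}^N, m)$ with monotone support as $\pi = \sum_{i=1}^K \delta_{z_i} \otimes \lambda_i$, where each $\lambda_i$ is a nonnegative measure with $\lambda_i(\bb{R}) = k_i/N$. Monotonicity of the support forces $\sup \operatorname{supp}(\lambda_i) \leq \inf \operatorname{supp}(\lambda_{i+1})$, and together with $\sum_i \lambda_i = m$ this specifies each $\lambda_i$ exactly through the partition construction in Section \ref{sec:partition}; regrouping recovers $\pi = \pi_0$. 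The main (mild) technical obstacle is the bookkeeping when several $x_j$'s coincide: any permutation of the partition pieces among coinciding Diracs produces the same measure on $\bb{R}^2$, so the formula for $\pi_0$ is unambiguous, and the uniqueness of the partition in Section \ref{sec:partition} then completes the argument.
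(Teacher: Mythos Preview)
Your proposal is correct and follows essentially the same approach as the paper: both rely on cyclic monotonicity of optimal plans in one dimension together with the uniqueness of the partition $\{m_{N,j}\}$ established in Section \ref{sec:partition}. The only difference is organizational---you first exhibit $\pi_0$ as the monotone coupling and then invoke the classical uniqueness of monotone couplings (which you then re-prove via the partition), whereas the paper starts directly from an arbitrary optimal $\pi$, decomposes it as $\sum_j \delta_{x_j}\times m_j$, uses cyclic monotonicity to order the supports of the $m_j$, and appeals to the uniqueness of the partition to conclude $m_j = m_{N,j}$.
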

	
	\begin{proof}
		We begin by observing that $m_j(E) = \pi(\{x_j\} \times E)$ defines a measure on $\bb{R}$ whose total mass is $m_j(\bb{R}) = \mu_{\bm{x}}^N(\{x_j\}) = \frac{1}{N}$.
		Since $\pi$ is supported in $\{x_1,\ldots,x_n\} \times \bb{R}$, it follows that
		\begin{equation}
			\pi = \sum_{j=1}^N \delta_{x_j} \times m_j.
		\end{equation}
		Our goal is to prove that $m_j = m_{N,j}$.
		First, define $E_j$ to be the support of $m_j$.
		Let us show that the sets $E_j$ must be ordered from left to right, i.e.~if $y_j \in E_j$ and $y_{j+1} \in E_{j+1}$ for some $j$ such that $x_j \neq x_{j+1}$ (and hence $x_j < x_{j+1}$), then $y_j \leq y_{j+1}$.
		Cyclical monotonicity of optimal transport plans tells us that
		\begin{equation*}
			\abs{x_j - y_j}^2 + \abs{x_{j+1} - y_{j+1}}^2 \leq \abs{x_j - y_{j+1}}^2 + \abs{x_{j+1} - y_j}^2.
		\end{equation*}
		Rearranging, we deduce
		\begin{equation*}
			(y_{j+1} - y_j)(x_{j+1} - x_j) \geq 0,
		\end{equation*}
		and so $y_{j+1} \geq y_j$, as desired.
		
		Now since $E_1,\ldots,E_n$ are ordered from left to right, the sets $E_1,\ldots,E_{N-1}$ all have upper bounds (and $E_n$ has a lower bound).
		Let $b_j = \sup E_j$ for $j = 1,\ldots,N-1$, and set $I_1 = \intoc{-\infty,b_1}$, $I_n = \intco{b_{N-1},\infty}$, and $I_j = [b_{j-1},b_j]$ for $j = 2,\ldots,N-1$.
		Let us show that $m_j|_{I_j^\circ} = m|_{I_j^\circ}$ for all $j$.
		If $E \subset I_j^\circ$, it follows that $E \cap E_k = \empty$ for all $k \neq j$, so $m(E) = \pi(\bb{R} \times E) = \sum_k \delta_{x_k} \times m_k(\bb{R} \times E) = \sum_k m_k(E) = m_j(E)$.
		It follows that the intervals $I_j$ and measures $m_j$ satisfy the properties of the partition from Section \ref{sec:partition}, so $m_j = m_{N,j}$, as desired.
	\end{proof}
	
	It follows from Proposition \ref{prop:transport plan} that
	\begin{equation}
		W_2(\mu_{\bm{x}}^N,m)^2 = \sum_{j=1}^N \int_{I_j} \abs{x_j - y}^2 \dif m_{N,j}(y),
	\end{equation}
	which is a strictly convex function of $\bm{x}$.
	Hence it has a unique minimizer, which we derive from the first-order condition
	\begin{equation} \label{optimal xj}
		x_j = n\int_{I_j} y \dif m_{N,j}(y).
	\end{equation}
	This is simply the barycenter of the measure $m_{N,j}$. A useful corollary of this fact is that $\mu_{\bm{x}}^N$ has the same barycenter as $m$, i.e.
	\begin{equation} \label{same barycenter}
		\frac{1}{N}\sum_{j=1}^N x_j
		= \sum_{j=1}^N \int y \dif m_{N,j}(y) = \int y \dif m(y).
	\end{equation}
	Finally, let us show that $W_2(\mu_{\bm{x}}^N,m) \to 0$ as $n \to \infty$, where for each $n$ the measure $\mu_{\bm{x}}^N$ is the optimal discretization with $n$ points.
	Let $\varepsilon > 0$.
	There exists $M > 0$ such that
	\begin{equation}
		\int_{[-M,M]^c} x^2 \dif m(x) \leq \varepsilon
		\quad \text{and} \quad
		m([-M,M]) \geq \frac{1}{2}.
	\end{equation}
	We can assume $n \geq 2$.
	Recall the definitions $I_1 = \intoc{-\infty,a_1}, I_n = \intco{a_{N-1},\infty},$ and $I_j = \intcc{a_{j-1},a_j}$ for $j = 2,\ldots,N-1$ from Section \ref{sec:partition}.
	From \eqref{optimal xj} we deduce that $a_{j-1} \leq x_j \leq a_j$ for $j = 2,\ldots,N-1$, and $x_1 \leq a_1$ and $x_n \geq a_n$.
	Let $k$ be the smallest index such that $a_k \geq -M$ and $\ell$ the greatest index such that $a_\ell \leq M$.
	We have
	\begin{equation} \label{discretization estimate 1}
		\sum_{j = k+1}^\ell \int_{I_j} \abs{x - x_j}^2 \dif m_{N,j}(x) \leq \sum_{j = k+1}^\ell \frac{1}{N}\abs{a_j - a_{j-1}}^2
		\leq \frac{1}{N} \del{\sum_{j = k+1}^\ell (a_j - a_{j-1})}^2
		= \frac{1}{N}(a_\ell - a_k)^2.
	\end{equation}
	If $j \leq k-1$ then $a_j < - M < 0$ and so
	\begin{equation} \label{discretization estimate 2}
		\sum_{j=1}^{k-1} \int_{I_j} \abs{x - x_j}^2 \dif m_{N,j}(x)
		\leq \sum_{j=1}^{k-1} \int_{I_j} \abs{x}^2 \dif m_{N,j}(x)
		\leq \int_{(-\infty,-M)} x^2 \dif m(x).
	\end{equation}
	Here we have used the construction of $m_{N,j}$ and the fact that $a_{k-1} < -M$ to deduce that $\sum_{j=1}^{k-1} m_{N,j} \leq m|_{(-\infty,-M)}$.
	Similarly, if $j \geq \ell + 2$ then $a_{j-1} > M > 0$ and so
	\begin{equation} \label{discretization estimate 3}
		\sum_{j=\ell+2}^N \int_{I_j} \abs{x - x_j}^2 \dif m_{N,j}(x)
		\leq \sum_{j=\ell+2}^N \int_{I_j} \abs{x}^2 \dif m_{N,j}(x)
		\leq \int_{(M,\infty)} x^2 \dif m(x).
	\end{equation}
	As for $j = k$, note that since $a_{k-1} < -M$ we necessarily have $a_k \leq M$, since
	\begin{equation}
		m(\intoc{-\infty,M}) \geq m(\intoc{-\infty,a_{k-1}}) + m([-M,M])
		\geq \frac{k-1}{N} + \frac{1}{2} \geq \frac{k}{N}.
	\end{equation}
	Therefore
	\begin{equation} \label{discretization estimate 4}
		\begin{split}
			\int_{I_k} \abs{x - x_k}^2 \dif m_{N,k}(x) 
			&\leq \int_{[a_{k-1},-M)} \abs{x-M}^2 \dif m_{N,k}(x)
			+ \int_{[-M,a_k]} \abs{x-x_k}^2 \dif m_{N,k}(x)\\
			&\leq \int_{(-\infty,-M)} 4x^2 \dif m(x)
			+ \frac{1}{N}(a_k + M)^2.
		\end{split}
	\end{equation}
	In an analogous way, we deduce
	\begin{equation} \label{discretization estimate 5}
		\int_{I_{\ell+1}} \abs{x - x_{\ell+1}}^2 \dif m_{N,\ell+1}(x)
		\leq \int_{(M,\infty)} 4x^2 \dif m(x)
		+ \frac{1}{N}(M - a_\ell)^2.
	\end{equation}
	Adding together \eqref{discretization estimate 1}, \eqref{discretization estimate 2}, \eqref{discretization estimate 3}, \eqref{discretization estimate 4}, and \eqref{discretization estimate 5}, we get
	\begin{equation}
		\sum_{j=1}^N \int_{I_j} \abs{x_j - x}^2 \dif m_{N,j}(x)
		\leq 5\int_{[-M,M]^c} x^2 \dif m(x)
		+ \frac{1}{N}(2M)^2
		\leq 5\varepsilon + \frac{1}{N}(2M)^2.
	\end{equation}
	Letting $n \to \infty$ and then $\varepsilon \to 0$, we obtain the desired convergence.
	\bibliographystyle{alpha}
	\bibliography{../../mybib/mybib}
\end{document}